\RequirePackage[hyphens]{url}
\documentclass{article}

\pdfoutput=1

\usepackage{endnotes}
\let\footnote=\endnote

\usepackage{tikz}
\usetikzlibrary{positioning}
\usetikzlibrary{calc}
\usetikzlibrary{fit}

\usepackage{subcaption, multirow}
\usepackage{amsmath, mathtools}

\usepackage{xltabular}
\usepackage[ruled,linesnumbered]{algorithm2e}
\PassOptionsToPackage{hyphens}{url}\usepackage[hypertexnames=false]{hyperref}

\usepackage{enumitem}

\newcommand{\R}{\mathbb{R}}
\newcommand{\lBrack}{[\![}
\newcommand{\rBrack}{]\!]}

\DeclareMathOperator{\proj}{proj}
\DeclareMathOperator{\conv}{conv}
\DeclareMathOperator{\cone}{cone}

\usepackage{natbib}
 \bibpunct[, ]{(}{)}{,}{a}{}{,}
 
 \def\bibsep{\smallskipamount}

 \setlength{\bibsep}{0pt plus 0.3ex}

\usepackage{multibib}
\newcites{appendix}{References}

\usepackage{amsthm,amssymb}
\usepackage{setspace}
\onehalfspacing

\usepackage[margin=0.8in]{geometry}
\geometry{letterpaper} 

\usepackage{apptools}

\allowdisplaybreaks

\newtheorem{theorem}{Theorem}
\newtheorem{lemma}{Lemma}
\newtheorem{proposition}{Proposition}
\newtheorem{corollary}{Corollary}
\newtheorem{assumption}{Assumption}
\newtheorem{repeattheorem}{Theorem}
\newtheorem{repeatlemma}{Lemma}
\newtheorem{repeatproposition}{Proposition}

\AtAppendix{\counterwithin{lemma}{section}}
\AtAppendix{\counterwithin{corollary}{section}}
\AtAppendix{\counterwithin{figure}{section}}
\AtAppendix{\counterwithin{table}{section}}
\AtAppendix{\counterwithin{equation}{section}}
\AtAppendix{}
\hypersetup{
           breaklinks=true,
           colorlinks=false, 
           pdfusetitle=true,
        }
        
\clubpenalty 10000
\widowpenalty 10000
\predisplaypenalty 10000
\interdisplaylinepenalty 100
\displaywidowpenalty 1500
\postdisplaypenalty 0
\interlinepenalty 0
\brokenpenalty 500
\adjdemerits 100
\linepenalty 10
\doublehyphendemerits 10000
\finalhyphendemerits 5000
\hyphenpenalty 50
\exhyphenpenalty 50
\binoppenalty 150
\relpenalty 100
\interfootnotelinepenalty 100
 
\title{BattOpt: Optimal Facility Planning for Electric Vehicle Battery Recycling}
\author{Matthew Brun\thanks{Operations Research Center, Massachusetts Institute of Technology, Cambridge, MA 02139. Email: \href{mailto:brunm@mit.edu}{brunm@mit.edu}} \and Xu Andy Sun\thanks{Sloan School of Management, Massachusetts Institute of Technology, Cambridge, MA 02139.  Email: \href{mailto:sunx@mit.edu}{sunx@mit.edu}}}
\date{\vspace{-3em}}

\begin{document}

\maketitle

\begin{abstract}
The electric vehicle (EV) battery supply chain will face challenges in sourcing scarce and expensive minerals required for manufacturing and in disposing of hazardous retired batteries.  Integrating recycling technology into the supply chain has the potential to alleviate these issues; however, players in the battery market must design investment plans for recycling facilities. In this paper, we propose a multistage stochastic optimization model for computing minimum cost recycling capacity decisions, in which retired batteries are recycled and recovered materials are used to manufacture new batteries. To construct a realistic and high-fidelity model, we transform EverBatt, a leading evaluation framework for battery recycling cost and environmental impact, into a prescriptive decision-making tool for determining optimal investment strategies. Our model is a separable concave minimization subject to linear constraints, a class for which we design a new finitely convergent global optimization algorithm based on piecewise linear approximation that solves up to 14x faster than comparable algorithms. We propose an equivalent reformulation of the model that reduces the total number of variables by introducing integrality constraints.  The reformulation can also be solved by our global algorithm with drastically reduced solve times. We detail a cut grouping strategy for a Benders' decomposition at the operational scale which improves convergence relative to single-cut and multi-cut implementations.  To produce a set of operational scenarios, we design an approach for generating time-series projections for new battery demand, retired battery supply, and material costs, leveraging state-of-the-art econometric models for critical metal prices and EV demand.  We apply our model to compute optimal investment plans and quantify the impact of optimal decision-making on the supply chain.  We analyze the impact of policy instruments to reveal key insights for policy makers: while recycling capacity grants and production credits both effectively encourage domestic investment in recycling, production credits are more cost effective under smaller policy budgets, whereas grants become the more cost-effective instrument as total policy expenditure increases.  Finally, the optimal solutions show that effective investment in recycling can reduce battery manufacturing costs by 22\% and reduce environmental impacts by up to 7\%.
\end{abstract}

\textit{Key words: } electric vehicle battery recycling, facility planning, global optimization, stochastic optimization

\section{Introduction}

Electric vehicles (EVs) are a principal technology in the energy transition that will play an important role in reducing emissions.  The demand for the lithium-ion batteries that power EVs is expected to grow by over 20\% year-over-year through 2030, reaching a market valuation in excess of \$360 billion \citep{campagnol2022valuechain}.  Manufacture of these batteries requires ``critical minerals'', materials defined by a vulnerability to supply chain disruption, including lithium (Li), cobalt (Co), nickel (Ni), and manganese (Mn) \citep{usgs2022critical}.  As demand for batteries grows significantly, so will the demand for these materials; for instance, lithium requirements are expected to grow 6-fold by 2030 under current climate pledges \citep{iea2022supplychain}.

EV batteries have variable lifespans, but generally are retired after a capacity reduction of 20\% with an expected age of 10 years \citep{marano2009lithium}.  As large numbers of batteries are removed from vehicles, attention must be paid to their safe disposal.  EV batteries contain toxic and heavy metals, so improper disposal may pose health and environmental risks \citep{kang2013potential}.  Further, recent European Union regulation sets recycled material composition requirements for new batteries and material recovery targets from retired batteries \citep{eu2023recyclingrequirement}.

In light of these challenges, a promising solution is the recycling of retired batteries, allowing for the recovery of component materials for reuse.  Lithium-ion battery packs are made up of many individual cells, each of which contain a graphite-based anode, lithium-based cathode, electrolytes, binding polymers, and copper and aluminum foils, inside a plastic or steel shell.  Common cathode compounds include lithium iron phosphate (LFP), lithium nickel cobalt aluminum oxide (NCA), and lithium nickel manganese cobalt oxide (NMC).  Many battery materials can be recovered via recycling, although the most valuable are the metals contained in cathode compounds.

Current technologies for lithium-ion battery recycling include pyrometallurgical, hydrometallurgical, and mechanical processes \citep{harper2019recycling}.  \textit{Pyrometallurgical recycling} exposes battery packs to high temperatures, incinerating extraneous battery components and inducing chemical reactions to recover compounds from the cathode material \citep{makuza2021pyrometallurgical}.  \textit{Hydrometallurgical recycling} reacts cathode materials with a leaching solution, from which compounds are precipitated \citep{yao2018hydrometallurgical}.  A newer process, referred to as \textit{direct recycling}, uses mechanical processes such as shredding and crushing to separate and recover constituent materials, followed by ``relithiation'' of cathode material to restore the chemical properties of a new material \citep{wu2023direct}.

EverBatt \citep{dai2019everbatt} is a recently proposed closed-loop model for analyzing the cost and environmental impacts of battery recycling and manufacturing with detailed, high-fidelity data for recycling process yields, facility costs, and emissions calculations.  In a \textit{closed-loop} model, recycled materials are used for the manufacturing of new products.  This differs from \textit{open-loop} models, in which recovered materials are sold or disposed.  In the EverBatt model, retired batteries are first recycled.  If pyrometallurgical or hydrometallurgical recycling is used, the extracted cathode precursors are remanufactured into cathode material in a step called \textit{cathode production}.  Under direct recycling, as cathode material is recovered in its original form, this step is not necessary.  Then, new batteries are manufactured using the recycled cathode material and other purchased materials.  Given fixed facility capacities that handle specific retired battery input and new battery output quantities, EverBatt computes the cost and environmental impact of recycling, cathode production, and manufacturing facilities across this closed-loop supply chain.
 
As players in the battery industry look towards incorporation of recycling technologies \citep{forbes2022redwood,basf2023partnership}, important questions surrounding investment and capacity planning will arise.  Firms will decide how much recycling capacity to construct, which technologies to invest in, and when to use recycled materials to produce new batteries.  In this paper, we address these questions by converting the EverBatt framework into an optimization model.  
We summarize our contributions in modeling, algorithmic design and computation, and analysis as follows:
\begin{enumerate}[label=\textbf{\arabic*}., noitemsep]
    \item \textbf{Modeling:} 
        \begin{itemize}
            \item We propose a multistage stochastic optimization model for the design and operation of a closed-loop EV battery recycling supply chain. At the strategic scale, a multi-year capacity decision is made for recycling and cathode production facilities.  Then, at the operational scale, decisions are made over a number of multistage scenarios on how many batteries to recycle and how to use recycled materials to manufacture new batteries. This model is a prescriptive decision-making tool built on the detailed, high-fidelity evaluation framework of battery supply chain cost and environmental impact proposed in the EverBatt tool.
            \item We design a methodology that leverages state-of-the-art econometric models for EV vehicle stock and critical mineral price projections to build a variety of operational scenarios for retired battery supply, new battery demand, and material cost.
        \end{itemize}

    \item \textbf{Algorithmic design and computation:}
    \begin{itemize}
        \item We propose a novel adaptive piecewise linear approximation algorithm (aPWL) for mixed-integer linearly constrained separable concave minimization that converges finitely to a globally optimal solution.  In computational tests, we show that the aPWL algorithm compares favorably to benchmark algorithms, solving up to 14x faster on some instances. 
    \item We develop an equivalent reformulation of our model that reduces the total number of variables by introducing integrality constraints.  The reformulation reduces solve times from hours to minutes.
    \item We leverage a Benders' decomposition approach to separate the operational scale from the strategic scale.  We evaluate cut aggregation strategies within the Benders' decomposition and show that a grouping of similar scenarios outperforms traditional single-cut and multi-cut implementations.
    \end{itemize}
    
    \item \textbf{Optimal investment and policy analysis:} 
    \begin{itemize}
        \item We conduct extensive computational studies to analyze the impact of closed-loop recycling on new battery manufacturing. Our analysis shows that an optimal recycling investment plan in the United States reduces the expected cost of battery production by 22\%, the expected energy consumption by 6\%, and the expected greenhouse gas emissions by 7\% through 2050, relative to a solution that does not utilize  recycling. We show similar reductions on a supply chain that includes the United States and China.  The optimal solutions also reveal a hidden benefit of hydrometallurgical recycling over direct recycling, as the former allows greater flexibility in the types of new batteries that can be manufactured from recycled materials. 
        
    \item We analyze the impact of policy instruments, including capacity grants and production tax credits, on optimal investment and operational decisions.  Our analysis reveals key insights for policy makers. We show that both recycling capacity grants and production credits encourage domestic investment in recycling capacity.  However, production credits are more cost effective under smaller policy budgets, whereas grants become the more cost-effective instrument with greater total policy expenditure.
    \end{itemize}
    
\end{enumerate}

The remainder of the paper is organized as follows.  In Section~\ref{sec:litReview}, we review relevant literature.  In Section~\ref{sec:model}, we introduce our models for investment planning in the EV battery recycling supply chain.  In Section~\ref{sec:algorithm}, we propose a novel algorithm for solving linearly constrained separable concave minimization problems and a scale decomposition scheme.  In Section~\ref{sec:data}, we describe our sources of data and a process for multistage scenario construction.  In Section~\ref{sec:results}, we show computational results on a variety of test problems, analyze representative optimal investment decisions, and discuss the impact of policy instruments on the supply chain.  In Section~\ref{sec:conclusion}, we conclude the paper.

\section{Literature Review}
\label{sec:litReview}

\subsection{Battery Recycling Optimization}
Capacity planning problems for manufacturing applications have been studied extensively in the literature.  For thorough reviews, see \cite{van2003commissioned} and \cite{martinez2014review}.  Many studies consider a discrete set of feasible capacity decisions.  We instead focus on formulations with concave costs and continuous decisions.  In this problem class, \cite{lee1987multifacility} and \cite{rajagopalan1998capacity} simplify the continuous space to a discrete set of potentially optimal decisions and solve the problem via dynamic programming.  \cite{li1994dynamic} similarly compute a finite set of candidate capacities and propose heuristic algorithms.  In this work, we introduce a reformulation by identifying an underlying structure of optimal decisions, but do not enumerate all such solutions.

Previous work has explored the application of capacity and logistics planning to EV battery recycling networks.  \cite{hoyer2015technology}, \cite{tadaros2022location}, and \cite{rosenberg2023dynamic} study open-loop battery recycling processes.  \cite{hoyer2015technology} propose a mixed-integer linear program (MILP) for a deterministic multi-period model that optimizes capacity decisions for battery disassembly, conditioning, and recycling facilities in Germany.  Feasible capacity decisions are restricted to a discrete set of values, modeled by the construction of ``modules'' with fixed capacities.  \cite{tadaros2022location} and \cite{rosenberg2023dynamic} also propose multi-period deterministic MILPs for planning of open-loop EV battery recycling, the former for the Swedish supply chain and the latter the German.  \cite{tadaros2022location} formulate the problem as a facility location problem with fixed capacities, while \cite{rosenberg2023dynamic} optimize both the planned location of a facility as well as its capacity, selected from a discrete set of possible capacity levels.

\cite{li2018cost}, \cite{wang2020optimal}, and \cite{li2023optimization} introduce models for closed-loop battery recycling.  \cite{li2018cost} propose a multi-period model with stochasticity in the number and age of retired batteries avaiable for recycling, and \cite{wang2020optimal} investigate a single-period deterministic setting.  Both models are solved with genetic algorithms.  \cite{li2023optimization} consider optimization of the recycling network design in a single-period model with stochasticity in material recovery rate, new battery demand, and retired battery supply.  This model optimally locates facilities with fixed capacities and is solved as a two-stage MILP with second-stage decomposition.

Our work is unique in its consideration of a multi-period, stochastic, closed-loop battery recycling problem.  A multi-period stochastic model motivates new approaches for generating scenarios with parameters that vary across time.  Our model further chooses capacities from a continuous range, in contrast with the discrete feasible sets of other works.  
This approach allows for more granularity in capacity planning and expansion without making a priori assumptions on the size of constructed facilities.  
However, considering a continuous space of capacity decisions introduces concave cost functions due to economies of scale, making the model a nonconvex program instead of a MILP and increasing the difficulty of identifying globally optimal capacity decisions.

\subsection{Linearly Constrained Concave Programming}

Many important optimization problems can be formulated as a concave minimization over a polyhedron, including quadratic assignment \citep{bazaraa1982use}, binary programming \citep{raghavachari1969connections}, and applications with economies of scale \citep{feldman1966warehouse}.  Algorithms for globally solving concave programs date back to Tuy's cutting planes \citep{tuy1964concave}; subsequently, a variety of branch and bound algorithms have been proposed \citep{rosen1983global,benson1985finite}.  For a survey of methods, see \cite{pardalos1986methods} and \cite{horst2013handbook}.

We focus on problems with separable objective functions (i.e., representable as a sum of univariate functions).  \cite{shectman1998finite} propose a finitely convergent branch and bound algorithm for separable concave minimization over polyhedra, in which the feasible region is partitioned with rectangular splits and the objective is underestimated by linear functions, allowing the subproblems to be solved as linear programs.  This algorithm is similar to that of \cite{falk1969algorithm}, with improved finite convergence guarantees.  \cite{dambrosio2009global} propose a global algorithm for separable nonconvex optimization.  The algorithm splits each nonconvex function into convex and concave segments and approximates the concave segments by piecewise linear under-approximators.  The approximation is solved globally as a mixed-integer convex program, and the under-approximators of the convex pieces are updated at the incumbent iterate.  This algorithm converges in the limit to a global optimum.  \cite{magnanti2004separable} show approximate equivalence between MILP and linearly constrained separable concave minimization by generating piecewise linear approximations of the concave cost functions with a number of pieces that scales linearly in the inverse of the approximation error.

Our proposed algorithm solves over a sequence of piecewise linear under-approximations of the objective function, where each approximation is exact at previous iterates.  As opposed to \cite{falk1969algorithm} and \cite{shectman1998finite}, we partition on all variables simultaneously at a global optimum of the incumbent approximation.  This reduces the total number of iterations, improving compatibility with iterative algorithms for decomposition.  Additionally, by representing piecewise functions as MILPs, we leverage the tools, including cuts, heuristics, and branching techniques, incorporated into modern commercial solvers.  \cite{dambrosio2009global} utilize a similar approximation and update scheme but apply the method to a broader problem class which does not admit a finite termination result.

\section{EV Battery Recycling and Manufacturing Model}
\label{sec:model}

We model the supply chain for new EV batteries through a sequence of production steps: retired battery recycling (REC), material conversion (MC), cathode  production (CP), and new battery manufacturing (NB).  Figure~\ref{fig:modelflows} depicts the material flow through these steps.  Battery recycling extracts materials from retired batteries via physical or chemical methods.  If the  recycling method recovers cathode powder, the cathode powder can be used directly for new battery manufacturing.  Otherwise, the recycled materials are used to produce new cathode powder in the cathode production step.  This step manufactures new cathode material using new or recycled precursors.  Material conversion is an intermediate step to convert recycled chemicals into the compounds required as inputs for cathode production.  Finally, in new battery manufacturing, EV battery cells are produced from cathode powder that is obtained from recycling or purchased from a market.

\begin{figure}[tp]
    \begin{center}
       \begin{tikzpicture}[scale=0.9]
    
            \tikzstyle{block} = [draw, rectangle, minimum height=2em, minimum width=6em]
        
            \node[block, align=center] (INV) at (-4,0) {Material Inventory\\ \eqref{eq:constrINVMatFlowNoCP}, \eqref{eq:constrINVMatFlowCP}};
            \node[block, right=4em of INV, align=center] (MC) {Material Conversion \\ \eqref{eq:constrMCProd}};
            \node[block, above=4em of MC, align=center] (NB) {New Battery Manufacturing\\ \eqref{eq:constrNBProdCP}, \eqref{eq:constrNBProdNoCP}};
            \node[block, below=4em of MC, align=center] (CP) {Cathode Production\\ \eqref{eq:constrCPProd}};
            \node[block, left=4em of INV, align=center] (REC) {Recycling \\ \eqref{eq:constrRECProd}};
            \node[block, above=4em of REC, align=center] (RB) {Retired Battery Inventory \\ \eqref{eq:constrRBMatFlow}};
        
            \draw[-stealth] (RB.south) -- (REC.north) node[midway,left]{$x^{\text{RB,RM}}$};
            \draw[-stealth] (REC.east) -- (INV.west) node[midway,above]{$x^{\text{RM,INV}}$};
        
            \draw[-stealth] (INV.east) -- (MC.west) node[midway,above]{$x^{\text{INV,MC}}$};
            \draw[-stealth] (MC.south) -- (CP.north) node[midway,left]{$x^{\text{MC,CP}}$};
            \draw[-stealth] (CP.west) -| (INV.south) node[near start,below]{$x^{\text{CP,INV}}$};
        
            \draw[-stealth] (INV.north) |- (NB.west) node[near end,above]{$x^{\text{INV,NB}}$};
        
            \draw[stealth-] (NB.east) -- ++ (6em,0) node[midway,above]{$x^{\text{NM,NB}}$};
            \draw[stealth-] (CP.east) -- ++ (6em,0) node[midway,above]{$x^{\text{NM,CP}}$};
        
            \draw[stealth-] (RB.north) -- ++ (0,5em) node[midway,left]{$s$};
            \draw[-stealth] (NB.north) -- ++ (0,5em) node[midway,left]{$d$};
            \draw[-stealth] (REC.south) -- ++ (0,-5em) node[midway,left]{$x^{\text{RM,S}\ }$};
            
        \end{tikzpicture}
    \end{center}
    \caption{Material flows through the model in a single time period and location.  RB represents retired batteries available for recycling, INV materials in inventory, RM recycled materials, S materials sold, and NM new materials purchased.  
    } \label{fig:modelflows}
\end{figure}

Recycling and cathode production require facilities, and facility capacities limit throughput.
We assume that demand for new batteries is always satisfied, so battery manufacturing facility decisions are not included in the model.  The facilities can be located in a number of zones, which model differences in facility costs by location.  Zones may contain multiple facilities, and production runs in parallel across zones, with supply and demand for retired and new batteries allocated to each zone.  Recycled material and retired batteries in inventory can be transported between zones.

\subsection{Multistage Stochastic Battery Recycling Model}
We propose a multistage stochastic optimization problem for the EV battery supply chain.  The model covers two scales: a planning (strategic) scale, where facility capacity decisions are made, and an operational scale, where operational decisions, including recycling and manufacturing throughput, are made.  The operational decisions respond to uncertainty in new battery demand, retired battery supply, and material cost, which is revealed progressively through a multistage scenario tree.  A multistage approach ensures that the optimal solution at each period considers future uncertainty, allowing for a realistic representation of how operational decisions are made in practice.

The model time horizon spans $T$ periods (e.g., years), with $\mathcal{T} = \lBrack T \rBrack$.  We define $\lBrack \cdot \rBrack = \{1,\dots,\cdot\}$.  The periods are partitioned into a set of $L$ planning periods (e.g., spans of 5 years), $\mathcal{L} = \lBrack L \rBrack$, which define the temporal granularity of the investment decisions; that is, the model allows additional capacity investment to be made at the start of each planning period.  The capacity decision for a planning period constrains throughput in the time periods associated with the planning period.  The set $\mathcal{T}_l \subseteq \mathcal{T}$ gives the set of time periods associated with planning period $l \in \mathcal{L}$, and, conversely, $l_t \in \mathcal{L}$ gives the planning period associated with time period $t \in \mathcal{T}$.  

We separately partition the time periods $\mathcal{T}$ into a set of $S$ stages (e.g., spans of 10 years), $\mathcal{S} = \lBrack S \rBrack$, which describe the points at which additional information about the stochastic parameters is revealed.  The two distinct partitions of the time periods, i.e., planning periods and stages, allow for different granularity in the facility capacity solution and in the frequency that new information is revealed in the multistage scenario tree.  The scenario tree has $S$ stages, and the parameter $\sigma_t \in \mathcal{S}$ gives the stage associated with period $t \in \mathcal{T}$.  Set $\Omega_{\sigma}$ contains the nodes at stage $\sigma \in \mathcal{S}$.  Each node $\omega \in \Omega_{\sigma}$ gives a realization of the stochastic parameters (new battery demand, retired battery supply, and material cost) in the time periods associated with stage $\sigma$. A set of operational decisions in the associated periods is made at each node $\omega \in \Omega_{\sigma}$, in response to the realization of the stochastic parameters.  For example, if each stage contains 10 years, each node gives a set of parameters for the 10 years in its stage and contains operational decisions for the same 10 years.  The probability of reaching node $\omega$ in stage $\sigma$ is $p_{\omega}$, so $\sum_{\omega \in \Omega_\sigma} p_\omega = 1$.

To complete the description of the scenario tree, we define the paths which connect nodes to the root node of the tree.  For each node $\omega $, the function $a_{\omega}\,:\,\mathcal{T} \rightarrow \cup_{\sigma \in \mathcal{S}} \Omega_{\sigma}$ maps an input period $t$ to the ancestor of node $\omega$ at stage $\sigma_t$.  This function generalizes the parent of node $\omega$ by providing ancestors at all stages of the scenario tree.  Specifically, $a_{\omega}(t)$ gives the node at stage $\sigma_t$ which precedes $\omega$ on the unique path from the root node to node $\omega$.  If $t$ is a period in the same stage as $\omega$, i.e., $\omega \in \Omega_{\sigma_t}$, then $a_{\omega}(t) = \omega$.  This function is used in the inventory balance constraints \eqref{eq:constrMatFlow} to link variables which may appear in different stages.  For details on our data-driven construction of a scenario tree, see Section~\ref{sec:data}.

The set of zones within the supply chain is given by $\mathcal{Z}$.  We denote operational decisions by the variables $x$, which are indexed by time period $t \in \mathcal{T}$ and scenario tree node $\omega \in \Omega_{\sigma_t}$.  Variables $y$ are facility capacity decisions, which are indexed by planning period $l \in \mathcal{L}$.  Specific variables are introduced in the text as they appear; additionally, tables that summarize the model variables and parameters are included in Section~\ref{sec:ecModel} of the appendix.

\subsubsection*{Production}
Constraints~\eqref{eq:constrProd} model the changes between materials during new battery manufacturing, material conversion, cathode production, and battery recycling:
\begin{subequations}
    \label{eq:constrProd}
    \begin{align}
        & \sum_{i \in \mathcal{I}} \Delta^{\text{NB}}_{i,k} d_{\omega,z,t,i} = x^{\text{NM,NB}}_{\omega,z,t,k} + x^{\text{INV,NB}}_{\omega,z,t,k} \quad & \forall k \in \mathcal{K}^{\text{CP}},\ t \in \mathcal{T},\ z \in \mathcal{Z},\ \omega \in \Omega_{\sigma_t}, \label{eq:constrNBProdCP}\\
        & \sum_{i \in \mathcal{I}} \Delta^{\text{NB}}_{i,k} d_{\omega,z,t,i} = x^{\text{NM,NB}}_{\omega,z,t,k} \quad & \forall k \in \mathcal{K} \setminus \mathcal{K}^{\text{CP}},\ t \in \mathcal{T},\ z \in \mathcal{Z},\ \omega \in \Omega_{\sigma_t}, \label{eq:constrNBProdNoCP}\\
        & \sum_{k' \in \mathcal{K}^{\text{CP}}} \Delta^{\text{CP}}_{k',k} x^{\text{CP,INV}}_{\omega,z,t,k'} = x^{\text{NM,CP}}_{\omega,z,t,k} + x^{\text{MC,CP}}_{\omega,z,t,k} \quad & \forall k \in \mathcal{K} \setminus \mathcal{K}^{\text{CP}},\ t \in \mathcal{T},\ z \in \mathcal{Z},\ \omega \in \Omega_{\sigma_t}, \label{eq:constrCPProd}\\
        & \sum_{k' \in \mathcal{K} \setminus \mathcal{K}^{\text{CP}}} \Delta^{\text{MC}}_{k',k} x^{\text{MC,CP}}_{\omega,z,t,k'} =  x^{\text{INV,MC}}_{\omega,z,t,k} \quad & \forall k \in \mathcal{K} \setminus \mathcal{K}^{\text{CP}},\ t \in \mathcal{T},\ z \in \mathcal{Z},\ \omega \in \Omega_{\sigma_t}, \label{eq:constrMCProd}\\
        & x^{\text{RM,INV}}_{\omega,z,t,k} + x^{\text{RM,S}}_{\omega,z,t,k} = \sum_{i \in \mathcal{I},\ j \in \mathcal{J}} \Delta^{\text{REC}}_{k,i,j} x^{\text{RB,RM}}_{\omega,z,t,i,j} \quad & \forall k \in \mathcal{K},\ t \in \mathcal{T},\ z \in \mathcal{Z},\ \omega \in \Omega_{\sigma_t}. \label{eq:constrRECProd}
    \end{align}
\end{subequations}
We remind that the process dynamics represented in constraints \eqref{eq:constrProd} are depicted in Figure~\ref{fig:modelflows}.  The set $\mathcal{K}$ gives the set of materials tracked across the production processes, such as graphite, copper, and carbon black.  The set of battery chemistries is given by $\mathcal{I}$, including chemistries of types NMC, NCA, and LFP.  Cathode powders are given by the set $\mathcal{K}^\text{CP} \subset \mathcal{K}$.  New batteries are manufactured from new or recycled cathode powder \eqref{eq:constrNBProdCP} and other non-recycled new materials (e.g., graphite, aluminum) \eqref{eq:constrNBProdNoCP} to meet battery demand $d_{\omega,z,t,i}$ for each chemistry and zone.  Variables $x^{\text{NM,NB}}_{\omega,z,t,k}$ and $x^{\text{INV,NB}}_{\omega,z,t,k}$ give the mass of new and recycled material $k$, respectively, used in new battery production, and the parameter $\Delta^{\text{NB}}_{i,k}$ gives the amount of material $k$ needed to manufacture a unit mass of battery with chemistry $i$.  Equation~\eqref{eq:constrCPProd} models the production of cathode powder from new or recycled materials, where the variables $x^{\text{NM,CP}}_{\omega,z,t,k}$ and $x^{\text{MC,CP}}_{\omega,z,t,k}$ give the mass of new and recycled material inputs to cathode production, respectively.  Output from cathode production is given by the variable $x^{\text{CP,INV}}_{\omega,z,t,k}$.  Recycled materials used in cathode production first undergo material conversion, modeled by \eqref{eq:constrMCProd}.  Inputs to material conversion are given by variable $x^{\text{INV,MC}}_{\omega,z,t,k}$ and outputs by $x^{\text{MC,CP}}_{\omega,z,t,k}$.  The mass of material $k$ required to manufacture a unit of material $k'$ in cathode production and material conversion are given by parameters $\Delta^{\text{CP}}_{k',k}$ and $\Delta^{\text{MC}}_{k',k}$, respectively. Finally, the battery recycling processes are modeled by \eqref{eq:constrRECProd}.  The set $\mathcal{J}$ gives the available recycling processes (pyrometallurgical, hydrometallurgical, and direct), variable $x^{\text{RB,RM}}_{\omega,z,t,i,j}$ tracks the mass of batteries of chemistry $i$ recycled with process $j$, and $\Delta^{\text{REC}}_{k,i,j}$ gives the amount of material $k$ recovered per unit of recycled battery with chemistry $i$ when using process $j$.  The output materials can be placed into inventory to be used in manufacturing or sold to the market, modeled by variables $x^{\text{RM,INV}}_{\omega,z,t,k}$ and $x^{\text{RM,S}}_{\omega,z,t,k}$, respectively.

\subsubsection*{Inventory Balance}
Constraints~\eqref{eq:constrMatFlow} model the flow of retired batteries and recycled materials between inventory and production processes:
\begin{subequations}
    \label{eq:constrMatFlow}
    \begin{align}
        & x^{\text{RB}}_{\omega,z,0,i} = 0 \quad & \forall i \in \mathcal{I},\ z \in \mathcal{Z},\ \omega \in \Omega_{\sigma_0}, \label{eq:constrRBMatFlowInit}\\
        & x^{\text{INV}}_{\omega,z,0,k} = 0 \quad & \forall k \in \mathcal{K},\ z \in \mathcal{Z},\ \omega \in \Omega_{\sigma_0},\label{eq:constrINVMatFlowInit}\\
        && \llap{$ \displaystyle x^{\text{RB}}_{\omega,z,t,i} = x^{\text{RB}}_{\omega',z,t-1,i} + \sum_{z' \in \mathcal{Z} \setminus \{z\}} \left ( x^{\text{TR,RB}}_{\omega,z',z,t,i} - x^{\text{TR,RB}}_{\omega,z,z',t,i} \right ) + s_{\omega,z,t,i} 
        - \sum_{j \in \mathcal{J}} x^{\text{RB,RM}}_{\omega,z,t,i,j}$} \label{eq:constrRBMatFlow} \\ 
        && \forall i \in \mathcal{I},\ t \in \mathcal{T},\ z \in \mathcal{Z},\ \omega \in \Omega_{\sigma_t},\ \omega' = a_{\omega}(t-1), \nonumber\\
        && \llap{$ \displaystyle x^{\text{INV}}_{\omega,z,t,k} = x^{\text{INV}}_{\omega',z,t-1,k} + \sum_{z' \in \mathcal{Z} \setminus \{z\}} \left ( x^{\text{TR,RM}}_{\omega,z',z,t,k} - x^{\text{TR,RM}}_{\omega,z,z',t,k} \right ) + x^{\text{RM,INV}}_{\omega,z,t,k} - x^{\text{INV,MC}}_{\omega,z,t,k} $} \label{eq:constrINVMatFlowNoCP}\\
        &&\forall k \in \mathcal{K} \setminus \mathcal{K}^\text{CP},\ t \in \mathcal{T},\ z \in \mathcal{Z},\ \omega \in \Omega_{\sigma_t},\ \omega' = a_{\omega}(t-1), \nonumber\\
        && \llap{$ \displaystyle x^{\text{INV}}_{\omega,z,t,k} = x^{\text{INV}}_{\omega',z,t-1,k} + \sum_{z' \in \mathcal{Z} \setminus \{z\}} \left ( x^{\text{TR,RM}}_{\omega,z',z,t,k} - x^{\text{TR,RM}}_{\omega,z,z',t,k} \right ) + x^{\text{RM,INV}}_{\omega,z,t,k} + x^{\text{CP,INV}}_{\omega,z,t,k} - x^{\text{INV,NB}}_{\omega,z,t,k} $} \label{eq:constrINVMatFlowCP}\\
        && \forall k \in \mathcal{K}^\text{CP},\ t \in \mathcal{T},\ z \in \mathcal{Z},\ \omega \in \Omega_{\sigma_t},\ \omega' = a_{\omega}(t-1). \nonumber
    \end{align}
\end{subequations}
The mass of retired batteries with chemistry $i$ in inventory is given by variable $x^{\text{RB}}_{\omega,z,t,i}$, and inventory for material $k$ is given by $x^{\text{INV}}_{\omega,z,t,k}$.  The variables $x^{\text{TR,RM}}_{\omega,z,z',t,k}$ and $x^{\text{TR,RB}}_{\omega,z,z',t,i}$ represent the transport of material $k$ and of batteries with chemistry $i$, respectively, from zone $z$ to zone $z'$.  Constraints \eqref{eq:constrRBMatFlowInit}-\eqref{eq:constrINVMatFlowInit} enforce that initial inventory is zero.  Retired battery conservation across time periods is modeled by \eqref{eq:constrRBMatFlow}, where the mass of newly retired batteries is given by $s_{\omega,z,t,i}$, and batteries leave inventory to be recycled.  Flow balance for materials is modeled by \eqref{eq:constrINVMatFlowNoCP}-\eqref{eq:constrINVMatFlowCP}, where material enters inventory from battery recycling or cathode production and is removed for material conversion and new battery production.  Constraints \eqref{eq:constrRBMatFlow}-\eqref{eq:constrINVMatFlowCP} establish conservation of material across time periods, and therefore need to enforce inventory balance across stage boundaries in the multistage scenario tree.  Interactions across nodes and stages are handled exclusively by these constraints.

\subsubsection*{Capacity}
Constraints~\eqref{eq:constrCap} model recycling and cathode production facility capacities:
\begin{subequations}
    \label{eq:constrCap}
    \begin{align}
        & \sum_{n \in \mathcal{N}^{\text{REC}}_l} y^{\text{REC}}_{z,l,j,n} \geq \sum_{i \in \mathcal{I}} x^{\text{RB,RM}}_{\omega,z,t,i,j} \quad & \forall j \in \mathcal{J},\ l \in \mathcal{L},\ t \in \mathcal{T}_l,\ z \in \mathcal{Z},\ \omega \in \Omega_{\sigma_t}, \label{eq:constrCapREC}\\
        & \sum_{n \in \mathcal{N}^{\text{CP}}_{l,k}} y^{\text{CP}}_{z,l,k,n} \geq x^{\text{CP,INV}}_{\omega,z,t,k} \quad & \forall k \in \mathcal{K}^{\text{CP}},\ l \in \mathcal{L},\ t \in \mathcal{T}_l,\ z \in \mathcal{Z},\ \omega \in \Omega_{\sigma_t}, \label{eq:constrCapCP}\\
        & \sum_{n \in \mathcal{N}^{\text{REC}}_l} y^{\text{REC}}_{z,l,j,n} \geq \sum_{n \in \mathcal{N}^{\text{REC}}_{l-1}} y^{\text{REC}}_{z,l-1,j,n} \quad & \forall j \in \mathcal{J},\ l \in \mathcal{L} \setminus \{1\},\ z \in \mathcal{Z}, \label{eq:constrCapRECIncrease}\\
        & \sum_{n \in \mathcal{N}^{\text{CP}}_{l,k}} y^{\text{CP}}_{z,l,k,n} \geq \sum_{n \in \mathcal{N}^{\text{CP}}_{l-1,k}} y^{\text{CP}}_{z,l-1,k,n} \quad & \forall k \in \mathcal{K}^{\text{CP}},\ l \in \mathcal{L} \setminus \{1\},\ z \in \mathcal{Z}, \label{eq:constrCapCPIncrease}\\
        & y^{\text{REC}}_{z,l,j,n} \leq u^{\text{REC}} \quad & \forall j \in \mathcal{J},\ l \in \mathcal{L},\ z \in \mathcal{Z},\ n \in \mathcal{N}^{\text{REC}}_l, \label{eq:constrCapRECUB}\\
        & y^{\text{CP}}_{z,l,k,n} \leq u^{\text{CP}} \quad & \forall k \in \mathcal{K}^{\text{CP}},\ l \in \mathcal{L},\ z \in \mathcal{Z},\ n \in \mathcal{N}^{\text{CP}}_{l,k}. \label{eq:constrCapCPUB}
    \end{align}
\end{subequations}
Capacity decisions are made by planning period, so the facility capacity in planning period $l$ applies to all scenarios and time periods  $\mathcal{T}_l$ within the planning period.  Recycling capacity can be constructed at up to $N^{\text{REC}}_l$ identical facilities within each zone, where $\mathcal{N}^{\text{REC}}_l = \lBrack N^{\text{REC}}_l \rBrack$ is the facility index set.  Variable $y^{\text{REC}}_{z,l,j,n}$ gives the capacity for facility $n$ by recycling process $j$, zone $z$, and planning period $l$.  Similarly, cathode production capacity is constructed on up to $N^{\text{CP}}_{l,k}$ lines, where different cathode powders are manufactured on distinct lines. The set $\mathcal{N}^{\text{CP}}_{l,k} = \lBrack N^{\text{CP}}_{l,k} \rBrack$ gives the line indices for each cathode powder type.  Variable $y^{\text{CP}}_{z,l,k,n}$ gives the capacity of line $n$ producing cathode powder $k$.  Constraint \eqref{eq:constrCapREC} limits the throughput of each recycling process by the total capacity across all facilities.  Similarly, cathode production throughput is constrained by \eqref{eq:constrCapCP}.  Constraints \eqref{eq:constrCapRECIncrease} and \eqref{eq:constrCapCPIncrease} require that total capacity is nondecreasing across planning periods.  Finally, recycling facilities and cathode production lines have maximum capacities of $u^{\text{REC}}$ and $u^{\text{CP}}$, respectively, enforced by \eqref{eq:constrCapRECUB} and \eqref{eq:constrCapCPUB}.

\subsubsection*{Cost}
Costs related to the production process are modeled by equations \eqref{eq:constrCost}:
\begin{subequations}
    \label{eq:constrCost}
    \begin{align}
        & C^{\text{OP}}_{\omega,t}(x) = && \sum_{z \in \mathcal{Z}} \left ( \ \sum_{k \in \mathcal{K}} c^{\text{NB,NM}}_{\omega,t,k} x^{\text{NM,NB}}_{\omega,z,t,k} + \sum_{k \in \mathcal{K} \setminus \mathcal{K}^{\text{CP}}} \left ( c^{\text{CP,NM}}_{\omega,t,k} x^{\text{NM,CP}}_{\omega,z,t,k} + c^{\text{MC}}_{\omega,z,t,k} x^{\text{MC,CP}}_{\omega,z,t,k} \right ) \vphantom{\sum_{z' \in \mathcal{Z} \setminus \{z\}}} \right . \label{eq:constrCostOP} \\ 
        & && \qquad  + \sum_{k \in \mathcal{K}} v_{\omega,t,k} \cdot \left ( \rho x^{\text{INV}}_{\omega,z,t,k} - \eta x^{\text{RM,S}}_{\omega,z,t,k} \right ) + \sum_{k \in \mathcal{K}^{\text{CP}}} c^{\text{CP}}_{\omega,z,t,k} x^{\text{CP,INV}}_{\omega,z,t,k} + 
        \sum_{i \in \mathcal{I},\ j \in \mathcal{J}} c^{\text{REC}}_{\omega,z,t,i,j} x^{\text{RB,RM}}_{\omega,z,t,i,j} \nonumber \\
        & && \left . \qquad + \sum_{z' \in \mathcal{Z} \setminus \{z\}} \left ( \sum_{k \in \mathcal{K}} c^{\text{TR,RM}}_{z,z'} x^{\text{TR,RM}}_{\omega,z,z',t,k} + \sum_{i \in \mathcal{I}} c^{\text{TR,RB}}_{z,z'} x^{\text{TR,RB}}_{\omega,z,z',t,i} \right ) \right ) \quad \forall t \in \mathcal{T},\ \omega \in \Omega_{\sigma_t}, \nonumber\\
        & C^{\text{PL}}_t(y) = &&
         \sum_{z \in \mathcal{Z}} \left (\ \sum_{j \in \mathcal{J}} \sum_{n \in \mathcal{N}^{\text{REC}}_{l_t}} f^{\text{REC}}_{z,j}(y^{\text{REC}}_{z,l_t,j,n}) + \sum_{k \in \mathcal{K}^{\text{CP}}} \sum_{n \in \mathcal{N}^{\text{CP}}_{l_t,k}} f^{\text{CP}}_{z,k}(y^{\text{CP}}_{z,l_t,k,n}) \right ) \quad \forall t \in \mathcal{T}. &
         \label{eq:constrCostPL}
    \end{align}
\end{subequations}
Operational costs per time period, including material, utility, transportation, and inventory costs, as well as revenue incurred by selling recycled material, are linear functions of operational decisions $x$ and are represented by the functions $C^{\text{OP}}_{\omega,t}(x)$, defined in \eqref{eq:constrCostOP}.  The parameters $c^{\text{NB,NM}}_{\omega,t,k}$ and $c^{\text{CP,NM}}_{\omega,t,k}$ give the purchase costs per unit of new material $k$ used in new battery manufacturing and cathode production, respectively.  The variable costs of producing a unit of material $k$ via material conversion, manufacturing a unit of cathode powder $k$ via cathode production, and recycling a unit of batteries of chemistry $i$ via process $j$ are given by $c^{\text{MC}}_{\omega,z,t,k}$, $c^{\text{CP}}_{\omega,z,t,k}$, and $c^{\text{REC}}_{\omega,z,t,i,j}$, respectively.  Per-unit transportation costs from zone $z$ to zone $z'$ are given by $c^{\text{TR,RM}}_{z,z'}$ for materials and $c^{\text{TR,RB}}_{z,z'}$ for batteries.  The revenue generated per unit of recycled material $k$ sold is a proportion $\eta \leq 1$ of the value of the material $v_{\omega,t,k}$; selling extraneous recycled materials that are not used in downstream manufacturing helps offset recycling facility costs.  Inventory costs are similarly incurred as a proportion $\rho$ of the value of material held in inventory. We note that stochastic parameters are indexed by node $\omega$.

Planning costs include capital and labor costs, and are represented by the functions $C^{\text{PL}}_t(y)$, defined in \eqref{eq:constrCostPL}.  Capital costs include costs for facility construction and equipment purchases.  These costs are concave functions due to economies of scale \citep{dai2019everbatt}.  Planning costs are defined in terms of functions $f$, where $f^{\text{REC}}_{z,j}(y^{\text{REC}}_{z,l,j,n})$ gives the long-term costs of facilities by recycling process and zone, and $f^{\text{CP}}_{z,k}(y^{\text{CP}}_{z,l,k,n})$ those of a cathode production line producing material $k$.  The functions $f\ :\ \R_+ \rightarrow \R$ are concave lower semicontinuous piecewise with the form
\begin{equation}
    \label{eq:objFuncForm}
    f(y) = \begin{cases}
    \sum_i q_i y^{r_i} + w & y > 0\\
    0 & y = 0.
\end{cases}
\end{equation}
The functions introduce a cost of $0$ when no capacity is constructed, and otherwise are the sum of concave power terms ($r_i \in (0,1]$ and $q_i \geq 0$) and a constant term ($w \geq 0$), and thus are monotonic increasing.

\subsubsection*{Complete Model}
We seek to minimize total expected costs over the model horizon with an annual discount factor of $\gamma$, subject to the constraint logic presented in this section:
\begin{equation}
    \label{eq:deterministicP}
    \tag{P}
    \begin{aligned}
        \min_{x,y} \quad & \sum_{t \in \mathcal{T}} (1 - \gamma)^{t-1} \left ( C^{\text{PL}}_t(y) + \sum_{\omega \in \Omega_{\sigma_t}} p_\omega C^{\text{OP}}_{\omega,t}(x) \right ) \\
        \text{s.t.} \quad & \eqref{eq:constrProd},\ \eqref{eq:constrMatFlow},\ \eqref{eq:constrCap}\\
        & x \geq 0,\ y \geq 0.
    \end{aligned}
\end{equation}
The model~\eqref{eq:deterministicP} is a minimization of a separable concave function subject to linear constraints.

\subsection{Model Properties}

In the remainder of this paper, we assume that Assumption~\ref{assump:dataProperties} holds.  The assumption enforces reasonable characteristics on the model data, including nonnegativity of costs, values, probabilities, supply, demand, material requirements, and maximum capacities.
The properties of the functions $f$ hold by their structure \eqref{eq:objFuncForm}, and discount factor $\gamma$ naturally falls on $[0,1)$.

\begin{assumption}
\label{assump:dataProperties}
The parameters $d$, $s$, $c$, $v$, $p$, and $\Delta$ are nonnegative, $u$ is positive, $\gamma \in [0,1)$, and functions $f$ are concave monotonic increasing with $f(0) = 0$.
\end{assumption}

Each scenario has a relatively complete recourse property, given in Proposition~\ref{prop:fullRecourse}.  This property ensures that, for any feasible capacity decision $y$, there is some feasible operational decision $x$.  
We further show in Proposition~\ref{prop:finiteP} that the model is feasible and has an optimal solution at an extreme point of its feasible region.  These and other original proofs are provided in Section~\ref{sec:ecProofs} of the appendix.

\begin{proposition}
    \label{prop:fullRecourse}
    For any $\overline{y} \geq 0$ that satisfies \eqref{eq:constrCapRECIncrease}-\eqref{eq:constrCapCPUB}, $\{(y,x) \geq 0 \ :\  \eqref{eq:constrProd},\ \eqref{eq:constrMatFlow},\ \eqref{eq:constrCap},\ y = \overline{y}\} \neq \emptyset$.
\end{proposition}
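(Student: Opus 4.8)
The plan is to exhibit an explicit feasible operational solution for any fixed admissible capacity $\overline{y}$, namely the \emph{no-recycling} policy in which every material required for new battery manufacturing is purchased new and no recycling, cathode production, material conversion, inter-zone transport, or sale takes place. Concretely, I would set $x^{\text{NM,NB}}_{\omega,z,t,k} = \sum_{i \in \mathcal{I}} \Delta^{\text{NB}}_{i,k} d_{\omega,z,t,i}$ for every $k \in \mathcal{K}$, and set every remaining operational variable ($x^{\text{INV,NB}}$, $x^{\text{NM,CP}}$, $x^{\text{MC,CP}}$, $x^{\text{CP,INV}}$, $x^{\text{INV,MC}}$, $x^{\text{RB,RM}}$, $x^{\text{RM,INV}}$, $x^{\text{RM,S}}$, and all transport variables) to zero, while letting the retired-battery inventory accumulate supply through the equality \eqref{eq:constrRBMatFlow}. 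All these values are nonnegative because $d$, $s$, and $\Delta$ are nonnegative under Assumption~\ref{assump:dataProperties}.

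Next I would verify the production constraints \eqref{eq:constrProd}. Equation \eqref{eq:constrNBProdCP} holds because setting $x^{\text{INV,NB}}=0$ leaves $x^{\text{NM,NB}}$ to absorb the demand, and \eqref{eq:constrNBProdNoCP} holds by the same assignment; equations \eqref{eq:constrCPProd}, \eqref{eq:constrMCProd}, and \eqref{eq:constrRECProd} all reduce to $0=0$ since each of their variables is zero. For the inventory balance \eqref{eq:constrMatFlow}, the initial conditions \eqref{eq:constrRBMatFlowInit}--\eqref{eq:constrINVMatFlowInit} are imposed directly, the recycled-material recursions \eqref{eq:constrINVMatFlowNoCP}--\eqref{eq:constrINVMatFlowCP} collapse to $x^{\text{INV}}=0$ everywhere because every inflow and outflow vanishes, and the retired-battery recursion \eqref{eq:constrRBMatFlow} forces $x^{\text{RB}}_{\omega,z,t,i} = x^{\text{RB}}_{a_\omega(t-1),z,t-1,i} + s_{\omega,z,t,i}$.

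A short induction along each root-to-node path of the scenario tree then shows $x^{\text{RB}}_{\omega,z,t,i}$ equals the path-accumulated supply, which is nonnegative; this is the one place where the multistage tree structure and the ancestor map $a_\omega(\cdot)$ must be handled with care, but because all recycled-material flows are zero the recursions remain consistent across stage boundaries and nonnegativity is immediate. Finally, the capacity constraints \eqref{eq:constrCapREC}--\eqref{eq:constrCapCP} hold because their right-hand sides are zero while $\overline{y} \geq 0$, and the remaining constraints \eqref{eq:constrCapRECIncrease}--\eqref{eq:constrCapCPUB} involve only $y = \overline{y}$, which is admissible by hypothesis. Since every constraint is satisfied and all variables are nonnegative, the constructed pair $(\overline{y},x)$ lies in the stated feasible set, so it is nonempty. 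I expect no substantive obstacle: the argument is a direct construction followed by verification, with the only mild care needed in the tree-indexed inventory recursions.
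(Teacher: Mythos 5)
Your proposal is correct and follows essentially the same argument as the paper: both construct the no-recycling solution with $x^{\text{NM,NB}}_{\omega,z,t,k} = \sum_{i\in\mathcal{I}}\Delta^{\text{NB}}_{i,k} d_{\omega,z,t,i}$, all other flows zero, and retired-battery inventory accumulating supply along each scenario path (the paper just writes the closed form $x^{\text{RB}}_{\omega,z,t,i}=\sum_{t'=1}^{t} s_{a_\omega(t'),z,t',i}$ rather than deriving it by induction). No gaps.
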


\begin{proposition}
    \label{prop:finiteP}
    The model \eqref{eq:deterministicP} is feasible and has an optimal solution at an extreme point of its feasible region.
\end{proposition}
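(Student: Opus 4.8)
The plan is to read \eqref{eq:deterministicP} as the minimization of a concave objective $g$ over a polyhedron $P$ and to argue in two stages: first that $P$ is nonempty, and then that the infimum of $g$ is attained at an extreme point of $P$. For feasibility I would exhibit the single strategic point $\overline{y}=0$. It satisfies the monotonicity constraints \eqref{eq:constrCapRECIncrease}--\eqref{eq:constrCapCPIncrease} trivially and the bounds \eqref{eq:constrCapRECUB}--\eqref{eq:constrCapCPUB} because $u>0$ under Assumption~\ref{assump:dataProperties}; Proposition~\ref{prop:fullRecourse} then supplies an accompanying $x\geq 0$ feasible for \eqref{eq:constrProd}, \eqref{eq:constrMatFlow}, \eqref{eq:constrCap}, so $P\neq\emptyset$.

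Next I would record two structural facts. First, the objective $g(x,y)=\sum_{t}(1-\gamma)^{t-1}\bigl(C^{\mathrm{PL}}_t(y)+\sum_{\omega}p_\omega C^{\mathrm{OP}}_{\omega,t}(x)\bigr)$ is concave: each $C^{\mathrm{OP}}_{\omega,t}$ is linear, each $C^{\mathrm{PL}}_t$ is a sum of the functions $f$, which are concave by \eqref{eq:objFuncForm} and Assumption~\ref{assump:dataProperties}, and the weights $(1-\gamma)^{t-1}>0$ and $p_\omega\geq 0$ preserve concavity. Second, since $P\subseteq\{(x,y):x\geq 0,\ y\geq 0\}$ and the nonnegative orthant contains no line, $P$ contains no line; a nonempty line-free polyhedron has at least one, and only finitely many, extreme points $V=\{v_1,\dots,v_M\}$.

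The crux, and what I expect to be the main obstacle, is that $P$ is \emph{not} compact: bidirectional inter-zone shipments $x^{\mathrm{TR,RM}}$ and $x^{\mathrm{TR,RB}}$ cancel in the balance constraints \eqref{eq:constrMatFlow} and are therefore unbounded, so I cannot simply invoke the Weierstrass theorem. Instead I would show directly that $g$ is bounded below on $P$. Every term of $C^{\mathrm{PL}}_t$ and every term of $C^{\mathrm{OP}}_{\omega,t}$ is nonnegative (as $c,v,\rho\geq 0$ and $f\geq 0$) except the sales revenue $-\eta\, v_{\omega,t,k}\, x^{\mathrm{RM,S}}_{\omega,z,t,k}$; but \eqref{eq:constrRECProd} bounds $x^{\mathrm{RM,S}}$ above by recycled output, which \eqref{eq:constrCapREC} together with \eqref{eq:constrCapRECUB} bounds by the finite recycling capacity. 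Hence the only negative contribution is bounded in magnitude and $g$ is bounded below on $P$.

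To conclude I would combine boundedness below with the resolution theorem $P=\conv(V)+K$, where $K$ is the recession cone of $P$. For any $p\in P$ and any $r\in K$, the univariate map $t\mapsto g(p+tr)$ is concave and bounded below on $[0,\infty)$, hence nondecreasing, so $g(p+r)\geq g(p)$; movement along recession directions never decreases $g$. Writing an arbitrary $z\in P$ as $z=p+r$ with $p=\sum_j\lambda_j v_j\in\conv(V)$ and $r\in K$, Jensen's inequality for the concave $g$ gives $g(z)\geq g(p)\geq\sum_j\lambda_j g(v_j)\geq\min_j g(v_j)$. Therefore $\min_{z\in P}g(z)=\min_{v\in V}g(v)$ is attained at an extreme point, which is the claim. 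The only routine pieces I would leave unexpanded are the termwise nonnegativity check of the cost functions and the elementary one-dimensional concavity argument for recession monotonicity.
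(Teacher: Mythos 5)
Your proposal is correct, and it follows the paper's skeleton for the easy parts (feasibility via $\overline{y}=0$ together with Proposition~\ref{prop:fullRecourse}, then extreme-point attainment via the resolution theorem for a line-free polyhedron in the nonnegative orthant, which is exactly the content of the paper's Lemma~\ref{lemma:concMinFinite}). Where you genuinely diverge is the crux: establishing that the objective is bounded below. The paper \emph{relaxes} the capacity constraints \eqref{eq:constrCap} entirely, decouples $y$ from $x$, and then shows the resulting operational LP is bounded by analyzing recession directions through the retired-battery balance constraints \eqref{eq:constrRBMatFlowInit} and \eqref{eq:constrRBMatFlow}: along any recession direction the summed flow balance forces $x^{\text{RB}}=x^{\text{RB,RM}}=0$, hence $x^{\text{RM,S}}=0$ via \eqref{eq:constrRECProd}, so the direction cannot improve the objective. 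You instead \emph{keep} the capacity constraints and observe that the sole negative term in the cost, the sales revenue $-\eta\, v\, x^{\text{RM,S}}$, is bounded in magnitude because \eqref{eq:constrRECProd} caps $x^{\text{RM,S}}$ by recycled output, which \eqref{eq:constrCapREC} and \eqref{eq:constrCapRECUB} cap by the finite total capacity $|\mathcal{N}^{\text{REC}}_l|\,u^{\text{REC}}$. Your argument is shorter and more direct; it exploits structure the paper's relaxation deliberately discards. The paper's version proves something slightly stronger (the capacity-decoupled relaxation is already bounded, which cleanly separates the planning and operational objectives), and its recession-direction analysis would survive even if the facility upper bounds \eqref{eq:constrCapRECUB}--\eqref{eq:constrCapCPUB} were removed, whereas your bound on $x^{\text{RM,S}}$ relies on them. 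Both routes are sound; the only steps you leave implicit (termwise nonnegativity of the remaining cost terms under Assumption~\ref{assump:dataProperties}, and the fact that a concave function bounded below on a ray is nondecreasing along it) are routine.
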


For convenience of notation, we introduce the operator $\mathcal{C}(\mathcal{X};v)$ which counts the number of elements of a set $\mathcal{X}$ that take the value $\nu$: $\mathcal{C}(\mathcal{X};\nu) = \sum_{x \in \mathcal{X}} \mathbb{I}(x = \nu)$.  In Theorem~\ref{thm:extPtStructure}, we note that any extreme point of the feasible region of \eqref{eq:deterministicP} satisfies a specific structure in the capacity decision $y$.  Specifically, each facility is constructed to its upper bound before adding any capacity to the next facility.  For a recycling solution $y^{\text{REC}}$ with total capacity $Y_{z,l,j} := \sum_{n \in \mathcal{N}^{\text{REC}}_l} y^{\text{REC}}_{z,l,j,n}$, the structure is defined by
\begin{equation}
    \label{eq:recExtPtProperty}
    \begin{aligned}
        \mathcal{C}(\{y^{\text{REC}}_{z,l,j,n}\}_{n \in \mathcal{N}^{\text{REC}}_l};u^{\text{REC}}) & \geq \left \lceil \frac{Y_{z,l,j}}{u^{\text{REC}}} \right \rceil - 1;\\
        \mathcal{C}(\{y^{\text{REC}}_{z,l,j,n}\}_{n \in \mathcal{N}^{\text{REC}}_l};0) & \geq |\mathcal{N}^{\text{REC}}_l| - \left \lceil \frac{Y_{z,l,j}}{u^{\text{REC}}} \right \rceil \quad \forall j \in \mathcal{J},\ l \in \mathcal{L},\ z \in \mathcal{Z},
    \end{aligned}
\end{equation}
with a corresponding structure for $y^{\text{CP}}_{z,l,k,n}$ defined in the appendix, equation \eqref{eq:CPExtPtProperty}.
Under this structure, there is at most one facility with capacity not equal to its upper or lower bound.  As \eqref{eq:deterministicP} has an optimal extreme point, we conclude that the model has an optimal solution with this structure.

\begin{theorem}
    \label{thm:extPtStructure}
    Let $(\tilde{y},\tilde{x})$ be an extreme point of $\{(y,x) \geq 0\ :\ \eqref{eq:constrProd},\ \eqref{eq:constrMatFlow},\ \eqref{eq:constrCap}\}$.  Then, $\tilde{y}^{\text{REC}}$ satisfies \eqref{eq:recExtPtProperty} and $\tilde{y}^{\text{CP}}$ satisfies the corresponding property.
\end{theorem}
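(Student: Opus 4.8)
The plan is to argue via the characterization of an extreme point as a feasible point admitting no balanced perturbation within the feasible region. First I would translate the combinatorial conditions in \eqref{eq:recExtPtProperty} into the equivalent statement that, for each fixed triple $(z,l,j)$, at most one of the facility capacities $\{\tilde y^{\text{REC}}_{z,l,j,n}\}_{n \in \mathcal{N}^{\text{REC}}_l}$ lies strictly in the open interval $(0, u^{\text{REC}})$. Writing $a$, $b$, $c$ for the number of these capacities equal to $u^{\text{REC}}$, equal to $0$, and strictly interior respectively, so that $a+b+c = |\mathcal{N}^{\text{REC}}_l|$, one checks directly that the two inequalities in \eqref{eq:recExtPtProperty} together force $c = |\mathcal{N}^{\text{REC}}_l| - a - b \leq 1$, and conversely that $c \leq 1$ implies both inequalities by evaluating $\lceil Y_{z,l,j}/u^{\text{REC}} \rceil$ in the cases $c=0$ (where $Y_{z,l,j} = a\,u^{\text{REC}}$, so the ceiling equals $a$) and $c=1$ (where $a\,u^{\text{REC}} < Y_{z,l,j} < (a+1)\,u^{\text{REC}}$, so the ceiling equals $a+1$). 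This reduces the theorem to showing that an extreme point cannot have two strictly interior facilities in a common group.

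I would then proceed by contraposition: assume some group $(z,l,j)$ contains two indices $n_1 \neq n_2$ with $\tilde y^{\text{REC}}_{z,l,j,n_1}, \tilde y^{\text{REC}}_{z,l,j,n_2} \in (0, u^{\text{REC}})$, and construct a perturbation direction $\delta$ supported only on these two coordinates, with value $+1$ on $n_1$ and $-1$ on $n_2$ and zero elsewhere (including on all $x$ variables). For $\epsilon > 0$ sufficiently small, both $\tilde y^{\text{REC}}_{z,l,j,n_1} \pm \epsilon$ and $\tilde y^{\text{REC}}_{z,l,j,n_2} \mp \epsilon$ remain in $[0, u^{\text{REC}}]$, so the individual upper bounds \eqref{eq:constrCapRECUB} (and nonnegativity) are preserved by both $\tilde y \pm \epsilon\delta$.

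The crux, and the step I expect to require the most care, is verifying that this perturbation preserves every remaining constraint. The key structural observation is that the individual facility indices $n$ enter the model only through the group sum $Y_{z,l,j} = \sum_{n} y^{\text{REC}}_{z,l,j,n}$ --- in the throughput constraint \eqref{eq:constrCapREC} and the monotonicity constraints \eqref{eq:constrCapRECIncrease}, where this sum appears both as the bounded quantity for period $l$ and as the lower bound governing period $l+1$ --- together with their own bounds \eqref{eq:constrCapRECUB}; they appear in none of the production constraints \eqref{eq:constrProd} or inventory constraints \eqref{eq:constrMatFlow}, which involve only the operational variables $x$. Since $\delta$ is balanced it leaves $Y_{z,l,j}$ unchanged, so every sum-based constraint is satisfied with identical slack, while the operational block is untouched because $\tilde x$ is unchanged. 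Hence both $\tilde y + \epsilon\delta$ and $\tilde y - \epsilon\delta$, each paired with $\tilde x$, are feasible, and $(\tilde y, \tilde x) = \tfrac12\big[(\tilde y + \epsilon\delta, \tilde x) + (\tilde y - \epsilon\delta, \tilde x)\big]$ realizes $(\tilde y, \tilde x)$ as the midpoint of two distinct feasible points, contradicting extremality. This establishes the claim for $\tilde y^{\text{REC}}$; the argument for $\tilde y^{\text{CP}}$ is verbatim the same after replacing $u^{\text{REC}}$, $\mathcal{N}^{\text{REC}}_l$, and the group label $(z,l,j)$ by $u^{\text{CP}}$, $\mathcal{N}^{\text{CP}}_{l,k}$, and $(z,l,k)$, using that the lines $n$ enter only through their sum in \eqref{eq:constrCapCP} and \eqref{eq:constrCapCPIncrease} and through their bound \eqref{eq:constrCapCPUB}.
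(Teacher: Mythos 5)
Your proposal is correct and follows essentially the same route as the paper's proof: reduce the combinatorial conditions to the statement that at most one facility in each group is strictly between $0$ and $u^{\text{REC}}$, then show that two strictly interior facilities admit a balanced $\pm\epsilon$ perturbation preserving the group sum and all bounds, exhibiting $(\tilde y,\tilde x)$ as a midpoint of two feasible points. The only cosmetic difference is that you establish the equivalence with ``at most one interior facility'' as a clean biconditional up front, whereas the paper argues each failing inequality separately to reach the same two-interior-indices conclusion before the identical perturbation step.
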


\subsection{Reformulating the Model}

Under the structure of the optimal solution implied by Theorem~\ref{thm:extPtStructure}, we reformulate the capacity decision variables $y$ and associated constraints \eqref{eq:constrCap}.  In the reformulation, integer variables $y^{\text{REC}}_{z,l,j}$ and $y^{\text{CP}}_{z,l,k}$ give the number of facilities with capacities at their upper bound, and continuous variables $y^{\text{REC}}_{z,l,j,+}$ and $y^{\text{CP}}_{z,l,k,+}$ give the capacity of the single facility not at its upper or lower bound:
\begin{subequations}
    \label{eq:constrCapRef}
    \begin{align}
        & u^{\text{REC}} y^{\text{REC}}_{z,l,j} + y^{\text{REC}}_{z,l,j,+} \geq \sum_{i \in \mathcal{I}} x^{\text{RB,RM}}_{\omega,z,t,i,j} \quad & \forall j \in \mathcal{J},\ l \in \mathcal{L},\ t \in \mathcal{T}_l,\ z \in \mathcal{Z},\ \omega \in \Omega_{\sigma_t}, \label{eq:constrCapRECRef}\\ 
        & u^{\text{CP}} y^{\text{CP}}_{z,l,k} + y^{\text{CP}}_{z,l,k,+} \geq x^{\text{CP,INV}}_{\omega,z,t,k} \quad & \forall k \in \mathcal{K}^{\text{CP}},\ l \in \mathcal{L},\ t \in \mathcal{T}_l,\ z \in \mathcal{Z},\ \omega \in \Omega_{\sigma_t}, \label{eq:constrCapCPRef}\\
        & u^{\text{REC}} y^{\text{REC}}_{z,l,j} + y^{\text{REC}}_{z,l,j,+} \geq u^{\text{REC}} y^{\text{REC}}_{z,l-1,j} + y^{\text{REC}}_{z,l-1,j,+} \quad & \forall j \in \mathcal{J},\ l \in \mathcal{L} \setminus \{1\},\ z \in \mathcal{Z}, \label{eq:constrCapRECIncreaseRef}\\
        & u^{\text{CP}} y^{\text{CP}}_{z,l,k} + y^{\text{CP}}_{z,l,k,+} \geq u^{\text{CP}} y^{\text{CP}}_{z,l-1,k} + y^{\text{CP}}_{z,l-1,k,+} \quad & \forall k \in \mathcal{K}^{\text{CP}},\ l \in \mathcal{L} \setminus \{1\},\ z \in \mathcal{Z}, \label{eq:constrCapCPIncreaseRef}\\
        & y^{\text{REC}}_{z,l,j} \in \{0,\dots,|\mathcal{N}^{\text{REC}}_l|-1\} \quad & \forall j \in \mathcal{J},\ l \in \mathcal{L},\ z \in \mathcal{Z}, \label{eq:constrCapRECInt}\\
        & y^{\text{CP}}_{z,l,k} \in \{0,\dots,|\mathcal{N}^{\text{CP}}_{l,k}|-1\} \quad & \forall k \in \mathcal{K}^{\text{CP}},\ l \in \mathcal{L},\ z \in \mathcal{Z}, \label{eq:constrCapCPInt}\\
        & y^{\text{REC}}_{z,l,j,+} \leq u^{\text{REC}} \quad & \forall j \in \mathcal{J},\ l \in \mathcal{L},\ z \in \mathcal{Z}, \label{eq:constrCapRECUBRef}\\
        & y^{\text{CP}}_{z,l,k,+} \leq u^{\text{CP}} \quad & \forall k \in \mathcal{K}^{\text{CP}},\ l \in \mathcal{L},\ z \in \mathcal{Z}. \label{eq:constrCapCPUBRef}
    \end{align}
\end{subequations}
We also introduce for all $t \in \mathcal{T}$ an adjusted version of the planning cost function:
\begin{equation*}
    \overline{C}^{\text{PL}}_t(y) = \sum_{z \in \mathcal{Z}} \left ( \sum_{j \in \mathcal{J}} \left ( f^{\text{REC}}_{z,j}(y^{\text{REC}}_{z,l_t,j,+}) + y^{\text{REC}}_{z,l_t,j} f^{\text{REC}}_{z,j}(u^{\text{REC}} ) \right ) + \sum_{k \in \mathcal{K}^{\text{CP}}} \left ( f^{\text{CP}}_{z,k}(y^{\text{CP}}_{z,l_t,k,+}) + y^{\text{CP}}_{z,l_t,k} f^{\text{CP}}_{k,z}(u^{\text{CP}}) \right ) \right ).
\end{equation*}

The full reformulated model is then given by
\begin{equation}
    \label{eq:reformulatedP}
    \tag{PMI}
    \begin{aligned}
        \min_{x,y} \quad & \sum_{t \in \mathcal{T}} (1 - \gamma)^{t-1} \left ( \overline{C}^{\text{PL}}_t(y) + \sum_{\omega \in \Omega_{\sigma_t}} p_\omega C^{\text{OP}}_{\omega,t}(x) \right ) \\
        \text{s.t.} \quad & \eqref{eq:constrProd},\ \eqref{eq:constrMatFlow},\ \eqref{eq:constrCapRef}\\
        & x \geq 0,\ y \geq 0.
    \end{aligned}
\end{equation}
The new model \eqref{eq:reformulatedP} is equivalent to the original formulation \eqref{eq:deterministicP}, shown in Theorem~\ref{thm:reformulationEquivalent}.  It also retains the linearly constrained separable concave structure of \eqref{eq:deterministicP} with the addition of integrality constraints.  Although these constraints add some complexity, the number of capacity variables is significantly reduced as they are no longer indexed by $\mathcal{N}^{\text{REC}}$ and $\mathcal{N}^{\text{CP}}$.

\begin{theorem}
    \label{thm:reformulationEquivalent}
    The models \eqref{eq:deterministicP} and \eqref{eq:reformulatedP} have the same optimal objective value.
\end{theorem}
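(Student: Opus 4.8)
The plan is to establish the equality of optimal values by exhibiting objective-preserving maps between the feasible regions of \eqref{eq:deterministicP} and \eqref{eq:reformulatedP} in both directions, which together force $\mathrm{opt}(\eqref{eq:deterministicP}) = \mathrm{opt}(\eqref{eq:reformulatedP})$. Throughout, the operational variables $x$ and the constraints \eqref{eq:constrProd}, \eqref{eq:constrMatFlow} are untouched by either map; the only coupling between $y$ and $x$ is through the total installed capacity $\sum_{n} y^{\text{REC}}_{z,l,j,n}$ (and its cathode analogue), so it suffices to show that each map preserves this total capacity together with the planning cost.

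First I would prove $\mathrm{opt}(\eqref{eq:deterministicP}) \leq \mathrm{opt}(\eqref{eq:reformulatedP})$. Given any feasible $(x,y)$ of \eqref{eq:reformulatedP}, I build a feasible point of \eqref{eq:deterministicP} by \emph{unpacking} each integer count: for each $(z,l,j)$ I set $y^{\text{REC}}_{z,l,j}$ of the facilities to $u^{\text{REC}}$, one facility to $y^{\text{REC}}_{z,l,j,+}$, and the remaining facilities to $0$. The index bound \eqref{eq:constrCapRECInt} guarantees a free slot for the partial facility, and \eqref{eq:constrCapRECUBRef} guarantees that the partial value respects the per-facility cap \eqref{eq:constrCapRECUB}, with the analogous construction for the cathode lines. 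The total capacity of this unpacked solution equals $u^{\text{REC}} y^{\text{REC}}_{z,l,j} + y^{\text{REC}}_{z,l,j,+}$, so all capacity and monotonicity constraints of \eqref{eq:constrCap} reduce to those of \eqref{eq:constrCapRef}; and, using $f(0)=0$ from \eqref{eq:objFuncForm} to drop the empty facilities, the planning cost $C^{\text{PL}}_t$ collapses to exactly $\overline{C}^{\text{PL}}_t$, so the objective is unchanged.

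Second, for the reverse inequality I invoke Proposition~\ref{prop:finiteP} to select an optimal solution of \eqref{eq:deterministicP} at an extreme point, and Theorem~\ref{thm:extPtStructure} to conclude that its capacity block obeys the packing structure \eqref{eq:recExtPtProperty} (and its cathode counterpart \eqref{eq:CPExtPtProperty}), i.e., at most one facility per $(z,l,j)$ lies strictly between $0$ and $u^{\text{REC}}$. I then \emph{pack} this solution into \eqref{eq:reformulatedP} by setting the integer variable to the number of saturated facilities and the continuous variable to the single intermediate value (or $0$ if none exists), again preserving both the total capacity and, via $f(0)=0$, the planning cost. This yields a feasible point of \eqref{eq:reformulatedP} whose objective equals the optimum of \eqref{eq:deterministicP}.

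The step requiring the most care is the boundary case in this second map where a facility group is fully saturated: all $|\mathcal{N}^{\text{REC}}_l|$ facilities sit at $u^{\text{REC}}$, so the raw count of saturated facilities is $|\mathcal{N}^{\text{REC}}_l|$, exceeding the cap $|\mathcal{N}^{\text{REC}}_l|-1$ imposed by \eqref{eq:constrCapRECInt}. I would resolve this by recording only $|\mathcal{N}^{\text{REC}}_l|-1$ facilities in the integer variable and absorbing the last saturated facility into the continuous variable, setting $y^{\text{REC}}_{z,l,j,+} = u^{\text{REC}}$, which is admissible under \eqref{eq:constrCapRECUBRef}; since $(|\mathcal{N}^{\text{REC}}_l|-1)f^{\text{REC}}_{z,j}(u^{\text{REC}}) + f^{\text{REC}}_{z,j}(u^{\text{REC}}) = |\mathcal{N}^{\text{REC}}_l| f^{\text{REC}}_{z,j}(u^{\text{REC}})$, the planning cost still matches exactly. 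With this edge case handled and the same reasoning applied verbatim to the cathode-production variables, both inequalities hold and the optimal objective values coincide.
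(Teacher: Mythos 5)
Your proposal is correct and follows essentially the same route as the paper: an unpacking map from \eqref{eq:reformulatedP} to \eqref{eq:deterministicP} for one inequality, and for the other, selecting an optimal extreme point via Proposition~\ref{prop:finiteP}, invoking Theorem~\ref{thm:extPtStructure} for the packing structure, and repacking into the integer/continuous variables while using $f(0)=0$ to match planning costs. The fully-saturated edge case you flag is handled in the paper by the arbitrary choice of the distinguished index $\overline{n}_{z,l,j}$, which amounts to exactly your fix of absorbing one saturated facility into the continuous variable.
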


\subsection{Open-Loop Models} Our model takes the perspective of closed-loop supply chain, where materials recovered from recycling are used to manufacture new batteries.  The open-loop perspective instead sells the recovered materials to a market.  Under the assumption that the nonnegativity constraint on $x^{\text{NM,NB}}$ (the amount of new material used in new battery production) is non-binding, $x^{\text{NM,NB}}$ can be replaced with its definition from constraint \eqref{eq:constrNBProdCP}.  
This assumption holds if the material requirements for manufacturing new batteries cannot be met through recycling alone.
Then, the objective contribution from $x^{\text{NM,NB}}$ becomes
$$\sum_{z \in \mathcal{Z},\ t \in \mathcal{T},\ k \in \mathcal{K}^{\text{CP}},\ \omega \in \Omega_{\sigma_t}} c^{\text{NM,NB}}_{\omega,t,k} x^{\text{NM,NB}}_{z,\omega,t,k} = \sum_{z \in \mathcal{Z},\ t \in \mathcal{T},\ k \in \mathcal{K}^{\text{CP}},\ \omega \in \Omega_{\sigma_t}} c^{\text{NM,NB}}_{\omega,t,k} \left ( \sum_{i \in \mathcal{I}} \Delta^{\text{NB}}_{i,k} d_{\omega,z,t,i} - x^{\text{INV,NB}}_{z,\omega,t,k} \right ).$$
This change effectively removes the new battery manufacture portion of the model and adds a revenue associated with ``selling'' recycled cathode powder at the market price, which represents the dynamics of an open-loop recycler.  In the remainder of this work, we focus on the closed-loop recycling model, although our approach, optimal solutions, and discussion generalize to an open-loop setting.

\section{Algorithmic Approaches}
\label{sec:algorithm}

\subsection{Adaptive Piecewise Linear Approximation Algorithm}
We propose a finitely convergent algorithm to find globally optimal solutions for separable concave minimization problems with linear constraints and mixed-integer variables.  The algorithm under-approximates each univariate concave objective function with a piecewise linear function, optimizes over the approximation, then updates the functions by introducing breakpoints at the incumbent solution, making the approximation exact at the current iterate.  The subproblem objectives provide lower bounds on the global optimum, and revisiting an iterate yields an optimality certificate. 

In this section, we define new notation which is distinct from the previous section.  For analysis of our algorithm, we introduce a stylized version of the models \eqref{eq:deterministicP} and \eqref{eq:reformulatedP}:
\begin{equation}
    \label{eq:SCP}
    \tag{SCP}
    \begin{aligned}
        \min_{(y,x) \in \mathcal{X}} \quad & \sum_{i = 1}^{n_y} f_i(y_i) + \sum_{\omega \in \Omega_{1}} p_\omega c_\omega^T x_\omega,
    \end{aligned}
\end{equation}
where the feasible region is $$\mathcal{X} = \{(y,\{x_\omega\}_{\omega \in \Omega_1}) \geq 0 \ :\ y \in \mathbb{Z}^{n_{\text{I}}} \times \R^{n_y - n_{\text{I}}},\ A y = b,\ B_\omega x_\omega + D y = d_\omega\ \forall \omega \in \Omega_{1}\}.$$  The functions $f_i\ :\ \R \rightarrow \R$ are concave lower semicontinuous, $p_\omega \in \R$, $c_\omega \in \R^{n_x}$, $A \in \R^{m_1 \times n_y}$, $b \in \R^{m_1}$, $B_\omega \in \R^{m_2 \times n_x}$, $D \in \R^{m_2 \times n_y}$ and $d_\omega \in \R^{m_2}$.  The number of integer capacity variables is given by $n_{\text{I}}$.
This notation emphasizes the scenario-based structure in the operational scale with blocks for each first-stage node, but generalizes the structure of descendant nodes by modeling them in the extensive form.  That is, for $\omega \in \Omega_1$, the parameters $(c_\omega,B_\omega,D,d_\omega)$ contain objective and constraint data for all descendant scenarios of node $\omega$.
We assume $\mathcal{X}$ is nonempty and bounded, so any feasible $y$ satisfies some consistent bounds $\underline{y} \leq y \leq \overline{y}$.  This assumption is reasonable as the models have finite optimal solutions by Proposition~\ref{prop:finiteP}.  We further assume a relatively complete recourse property, which holds for our models 
by Proposition~\ref{prop:fullRecourse}:
\begin{equation}
    \label{eq:fullRecourseProp}
    \{x \geq 0\ :\ B_\omega x = d_\omega - D y\} \neq \emptyset \quad \forall \omega \in \Omega_1,\ y \in \mathbb{Z}^{n_{\text{I}}} \times \R^{n_y - n_{\text{I}}}\ :\ y \geq 0,\ Ay = b.
\end{equation}

We now detail our algorithm.  Given a set of $k_i$ breakpoints $\{y_i^{(j)}\}_{j = 1}^{k_i}$ for variable $y_i$ ordered so that $y_i^{(j)} < y_i^{(j+1)}$, we construct piecewise linear functions $\overline{f}_i\ :\ [y_i^{(1)},y_i^{(k_i)}] \rightarrow \R$ by 
\begin{equation}
    \label{eq:underapproximatorConstruction}
    \overline{f}_i(y_i) = {\Bigg \{ } \frac{f_i(y_i^{(j+1)}) - f_i(y_i^{(j)})}{y_i^{(j+1)} - y_i^{(j)}} (y_i - y_i^{(j)}) + f_i(y_i^{(j)}) \quad y_i \in [y_i^{(j)}, y_i^{(j+1)} ) \quad \forall j \in \lBrack k_i-1 \rBrack.
\end{equation}
We build a subproblem \eqref{eq:SCPSP} that replaces the objective of \eqref{eq:SCP} with the piecewise functions $\overline{f}_i$:
\begin{equation}
    \label{eq:SCPSP}
    \tag{SP}
    \begin{aligned}
        \min_{(y,x) \in \mathcal{X}} \quad & \sum_{i = 1}^n \overline{f}_i(y_i) + \sum_{\omega \in \Omega_{1}} p_\omega c_\omega^T x_\omega.\\
    \end{aligned}
\end{equation}
As $\overline{f}_i$ are underestimators of $f_i$, the subproblems are relaxations of \eqref{eq:SCP}.
The piecewise functions can be modeled with SOS-2 constraints or mixed-integer variables.

Denote the (polyhedral) set formed by fixing the discrete variables of $\mathcal{X}$ to some solution $\overline{y}$ by 
$\mathcal{X}(\overline{y}) = \{(y,x) \in \mathcal{X}\ :\ y_i = \overline{y}_i\ \forall i \in \lBrack n_{\text{I}} \rBrack \}$.
Lemma~\ref{lemma:extPointOptDiscreteFixed} shows that there is an optimal solution for \eqref{eq:SCPSP} that falls at an extreme point of $\mathcal{X}(y^*)$ for some $y^*$.  

\begin{lemma}
    \label{lemma:extPointOptDiscreteFixed}
    Let $(y^*,x^*) \in \mathcal{X}$ be an optimal solution to \eqref{eq:SCPSP}.  Then, there is an optimal solution that is an extreme point of $\mathcal{X}(y^*)$.
\end{lemma}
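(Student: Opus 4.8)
The plan is to reduce the statement to the classical fact that a concave function attains its minimum over a nonempty polytope at an extreme point. The crucial preliminary observation is that each under-approximator $\overline{f}_i$ constructed in \eqref{eq:underapproximatorConstruction} is itself concave on $[y_i^{(1)}, y_i^{(k_i)}]$. Indeed, since $f_i$ is concave, for any three consecutive breakpoints $y_i^{(j-1)} < y_i^{(j)} < y_i^{(j+1)}$ the standard divided-difference inequality gives that the secant slope on $[y_i^{(j-1)}, y_i^{(j)}]$ is at least the secant slope on $[y_i^{(j)}, y_i^{(j+1)}]$. Hence the successive slopes of the piecewise linear interpolant are non-increasing, which is exactly concavity of $\overline{f}_i$.

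Next I would verify that $\mathcal{X}(y^*)$ is a nonempty compact polytope. It is nonempty because $(y^*,x^*) \in \mathcal{X}(y^*)$ by construction; it is polyhedral because fixing the integer coordinates $y_i = y_i^*$ for $i \in \lBrack n_{\text{I}} \rBrack$ merely appends linear equalities to the description of $\mathcal{X}$ with its integrality requirement dropped; and it is bounded because $\mathcal{X}$ is assumed bounded. A bounded polyhedron is compact.

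Then I would restrict the objective of \eqref{eq:SCPSP} to $\mathcal{X}(y^*)$. On this set the terms $\overline{f}_i(y_i^*)$ for $i \in \lBrack n_{\text{I}} \rBrack$ are constant, the terms $\overline{f}_i(y_i)$ for the continuous coordinates are concave by the first step, and $\sum_{\omega \in \Omega_1} p_\omega c_\omega^T x_\omega$ is linear; their sum is therefore a concave function over the polytope $\mathcal{X}(y^*)$. Invoking the classical result that a concave function attains its minimum over a nonempty polytope at an extreme point, there is an extreme point $(\hat{y}, \hat{x})$ of $\mathcal{X}(y^*)$ whose objective value is no larger than that of $(y^*,x^*)$. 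Since $\mathcal{X}(y^*) \subseteq \mathcal{X}$, the point $(\hat{y}, \hat{x})$ is feasible for \eqref{eq:SCPSP}, so optimality of $(y^*,x^*)$ forces equality of the two objective values; thus $(\hat{y}, \hat{x})$ is an optimal solution that is an extreme point of $\mathcal{X}(y^*)$, as claimed.

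The main obstacle is the first step, establishing that the piecewise linear under-approximator of a concave function is again concave; once this is in hand, the remainder is a routine application of the extreme-point property of concave minimization over a polytope. A minor point to handle carefully is that an extreme point of $\mathcal{X}(y^*)$ indeed lies in $\mathcal{X}$: it inherits the fixed values $y_i = y_i^*$ for $i \in \lBrack n_{\text{I}} \rBrack$, which are integral since $(y^*,x^*) \in \mathcal{X}$, so that the comparison of objective values against the $\eqref{eq:SCPSP}$-optimal point $(y^*,x^*)$ is legitimate.
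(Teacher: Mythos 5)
Your proposal is correct and follows essentially the same route as the paper: restrict \eqref{eq:SCPSP} to the polytope $\mathcal{X}(y^*)$, note the restricted objective is concave, and invoke the extreme-point property of concave minimization over a bounded polytope (the paper's Lemma~\ref{lemma:horstExtPt}), with the nonemptiness of $\mathcal{X}(y^*)$ guaranteeing the restricted optimum matches the original one. The only cosmetic difference is that you establish concavity of $\overline{f}_i$ via monotonicity of the secant slopes, whereas the paper (Lemma~\ref{lemma:pwlFuncProperties}) does so by exhibiting $\overline{f}_i$ as a pointwise minimum of affine functions; both are valid.
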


\begin{assumption}
    \label{assump:extPoint}
    Solving \eqref{eq:SCPSP} yields an optimal solution $(y^*,x^*)$ that is an extreme point of $\mathcal{X}(y^*)$.
\end{assumption}

We propose the adaptive piecewise linear approximation algorithm (aPWL), Algorithm~\ref{alg:PWL}, that solves a sequence of subproblems \eqref{eq:SCPSP} and updates the breakpoints defining the piecewise functions at every iteration.  As the subproblems minimize piecewise linear functions subject to linear and integrality constraints, they can be solved to global optimality via an LP-based branch and bound solver.  Assumption~\ref{assump:extPoint} establishes that solving \eqref{eq:SCPSP} in step 3 yields one of the optimal extreme points described in Lemma~\ref{lemma:extPointOptDiscreteFixed}.  A branch and bound solver can be tailored to generate optimal extreme points of the restricted feasible region, satisfying this assumption.  Under Assumption~\ref{assump:extPoint}, the algorithm converges finitely to a global optimum of \eqref{eq:SCP}, shown in Theorem~\ref{thm:finiteConvergence}.

\begin{algorithm}[t]
\caption{Adaptive Piecewise Linear Approximation}\label{alg:PWL}
Initialize $(y^{(1)}_i,y^{(2)}_i) \gets (\underline{y}_i, \overline{y}_i) \text{ and } k_i \gets 2 \quad \forall i \in \lBrack n_y \rBrack$.  Set $s \gets 1$.\\
For all $i \in \lBrack n_y \rBrack$, construct $\overline{f}_i^s$ by \eqref{eq:underapproximatorConstruction} with breakpoints $\{y_i^{(j)}\}_{j = 1}^{k_i}$.\\
Solve ${\displaystyle \min_{(y,x) \in \mathcal{X}} \quad \sum_{i = 1}^n \overline{f}_i^s(y_i) + \sum_{\omega \in \Omega_1} p_\omega c_\omega^T x_\omega}$ to optimal solution $(\tilde{y}^{s},\{\tilde{x}^{s}_\omega\}_{\omega \in \Omega_1})$.\\
If $\overline{f}_i^s (\tilde{y}_i^{s}) \geq f_i(\tilde{y}_i^{s})$ for all $i$, STOP, $(\tilde{y}^{s},\{\tilde{x}^{s}_\omega\}_{\omega \in \Omega_1})$ is optimal for \eqref{eq:SCP}.\\
Otherwise, for each $i$ where $\overline{f}_i^s (\tilde{y}_i^{s}) < f_i(\tilde{y}_i^{s})$, add breakpoint $y_i^{(k_i + 1)} \gets \tilde{y}_i^{s}$.  Set $k_i \gets k_i + 1$.  Reindex $\{y_i^{(j)}\}_{j = 1}^{k_i}$ so that $y_i^{(j)} < y_i^{(j+1)}$ for all $i \in \lBrack n_y \rBrack$ and $j \in \lBrack k_i - 1 \rBrack$.\\
Set $s \gets s + 1$.  Go to 2.\\
\end{algorithm}

\begin{theorem}
    \label{thm:finiteConvergence}
    Algorithm~\ref{alg:PWL} terminates finitely with a global optimum of \eqref{eq:SCP}.
\end{theorem}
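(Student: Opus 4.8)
The plan is to separate the claim into two independent parts: correctness of the stopping rule, and finiteness of the number of iterations.

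For \textbf{correctness}, I would first record that each $\overline{f}_i^s$ is a global underestimator of $f_i$ on $[\underline{y}_i,\overline{y}_i]$: since $f_i$ is concave, its graph lies weakly above every chord, so the piecewise-linear interpolant defined by \eqref{eq:underapproximatorConstruction} satisfies $\overline{f}_i^s \le f_i$ pointwise. Consequently \eqref{eq:SCPSP} is a relaxation of \eqref{eq:SCP}, and its optimal value lower-bounds that of \eqref{eq:SCP}. At termination the stopping test gives $\overline{f}_i^s(\tilde y_i^s)\ge f_i(\tilde y_i^s)$ for every $i$, which combined with the underestimator inequality forces $\overline{f}_i^s(\tilde y_i^s)= f_i(\tilde y_i^s)$. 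Hence the objectives of \eqref{eq:SCPSP} and \eqref{eq:SCP} coincide at the feasible point $(\tilde y^s,\tilde x^s)$; since this common value equals the optimal value of the relaxation \eqref{eq:SCPSP}, which lower-bounds the optimal value of \eqref{eq:SCP}, the point $(\tilde y^s,\tilde x^s)$ is globally optimal for \eqref{eq:SCP}.

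For \textbf{finite termination}, the first step is to confine the iterates to a finite set. By Assumption~\ref{assump:extPoint}, every solution returned in step 3 is an extreme point of $\mathcal{X}(\tilde y^s)$. Because $\mathcal{X}$ is bounded and only $n_{\text{I}}$ of its variables are integer, there are finitely many feasible integer fixings, and for each fixing the corresponding polyhedron is bounded and thus has finitely many extreme points. The union $V$ of all these extreme points over all integer fixings is therefore finite, every iterate $(\tilde y^s,\tilde x^s)$ lies in $V$, and in particular the $y$-iterates take values in the finite projection of $V$ onto the $y$-coordinates. I would then argue by contradiction: if the algorithm never stops, some $y$-value repeats, say $\tilde y^{s'}=\tilde y^s$ with $s<s'$. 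The key monotonicity fact is that refining the breakpoint set of a concave function raises its interpolant pointwise, so since the breakpoints at iteration $s'$ contain those at $s$ we have $\overline{f}_i^{s}\le \overline{f}_i^{s'}\le f_i$ on $[\underline y_i,\overline y_i]$. For any coordinate $i$: if the approximation was inexact at $\tilde y_i^s$ in iteration $s$, step 5 inserted $\tilde y_i^s$ as a breakpoint and the interpolant is exact at its breakpoints, so $\overline{f}_i^{s'}(\tilde y_i^{s'})=f_i(\tilde y_i^{s'})$; if it was already exact, then $f_i(\tilde y_i^s)=\overline{f}_i^{s}(\tilde y_i^s)\le \overline{f}_i^{s'}(\tilde y_i^s)\le f_i(\tilde y_i^s)$ forces equality again. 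Thus the stopping test passes at iteration $s'$, contradicting the assumption that the algorithm runs forever.

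The \textbf{main obstacle} is the finite-termination half, and within it two points require care: (i) justifying that the iterates are drawn from a finite pool, which hinges on Assumption~\ref{assump:extPoint} (supplied by Lemma~\ref{lemma:extPointOptDiscreteFixed}) together with boundedness and integrality; and (ii) the pointwise-monotonicity lemma for breakpoint refinement of concave functions, together with the sandwich argument that upgrades ``exact at $s$'' to ``exact at $s'$'' for the coordinates where no breakpoint was added. The correctness half is comparatively routine once the underestimator property is in hand.
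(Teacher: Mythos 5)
Your proposal is correct and follows essentially the same route as the paper: the relaxation/underestimator argument for global optimality at termination, and finiteness via Assumption~\ref{assump:extPoint} confining the iterates to the finitely many extreme points over all integer fixings, followed by the breakpoint-refinement monotonicity (the paper's Corollary~\ref{corr:succPWL}) and the same two-case analysis showing the stopping test fires when a $y$-iterate repeats. The only cosmetic difference is that you phrase finite termination as a contradiction while the paper gives the explicit bound of $|\overline{\mathcal{E}}|+1$ iterations via pigeonhole.
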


\subsection{Scale Decomposition}
\label{sec:ssDecomposition}

 We further leverage the multistage stochastic structure of the model by employing a Benders' decomposition scheme \citep{benders1962partitioning} in step 3 of Algorithm~\ref{alg:PWL} that separates decisions at the planning scale from those at the operational scale.  In the context of stochastic optimization, this decomposition is also referred to as the L-shaped method \citep{van1969shaped}.  The subproblems \eqref{eq:SCPSP} are solved iteratively by generating cuts from the operational scale.  The operational problem is separable over the first-stage nodes $\Omega_1$ and thus can be solved in parallel as linear programs.  We do not further decompose over nodes at later stages in the multistage scenario tree; experiments on this style of decomposition (i.e., the nested L-shaped method, see \citealt[Ch.~6.1]{birge2011introduction}) do not reduce computation times for the size of problem considered in this work.  Importantly, every scenario has a relatively complete recourse property \eqref{eq:fullRecourseProp}, so the decomposition scheme does not need to add cuts that enforce feasibility.

For each operational problem indexed by $\omega \in \Omega_1$, we denote the extreme points of the dual feasible region $\{\pi\ :\ p_\omega c_\omega \geq B^T_\omega \pi\}$ by $\pi^{(\omega)}_j$ for $j \in \mathcal{J}_\omega$, where $\mathcal{J}_\omega$ is an index set for the dual extreme points of problem $\omega$.  Under property \eqref{eq:fullRecourseProp}, every operational problem has a finite optimal solution for any capacity decision $y$.  Thus, each dual problem has a finite optimum at some extreme point $\pi^{(\omega)}_j$.  The dual problems can then be equivalently represented by the convex piecewise linear value functions
\begin{equation}
    \label{eq:dualPWL}
    v_\omega (y) = \max_{j \in \mathcal{J}_\omega}\ (d_\omega - D y)^T \pi^{(\omega)}_j.
\end{equation}
The Benders' decomposition approximates $v_\omega$ by adding valid cuts at incumbent capacity solutions $y$.  Importantly, added cuts remain valid between iterations of Algorithm~\ref{alg:PWL}, allowing warmstarting of the decomposition between iterations and decreasing the number of new cuts required to converge.

For the sake of computational efficiency, we only partially decompose the operational problems.  We construct groups of first-stage nodes and apply the Benders' cut generation approach to each group.  This approach is similar to that employed by \cite{adulyasak2015benders}.  Let the groups $\{\mathcal{G}_e\}_{e = 1}^{n_g}$ be a partition of the nodes $\Omega_1$.  Then, we generate cuts for the grouped value functions
\begin{equation}
    \label{eq:dualPWLGroup}
    \max_{\{j_\omega\}_{\omega \in \mathcal{G}_e} \in \prod_{\omega \in \mathcal{G}_e} \mathcal{J}_\omega}\ \sum_{\omega \in \mathcal{G}_e} (d_\omega - D y)^T \pi^{(\omega)}_{j_\omega},
\end{equation}
instead of the individual functions \eqref{eq:dualPWL}. Grouping in such a way reduces the number of convex functions to model but increases the number of linear pieces in each function.  We describe how the groups $\{\mathcal{G}_e\}_{e = 1}^{n_g}$ are chosen in Section~\ref{sec:scenarioTree}.

\section{Data Sources and Scenario Generation}
\label{sec:data}

We model the recycling and manufacture of six battery chemistries: NMC(111), NMC(532), NMC(622), NMC(811), NCA, and LFP.  NMC are lithium nickel manganese cobalt oxides, NCA is a lithium nickel cobalt aluminum oxide, and LFP is lithium iron phosphate.  Batteries can be recycled through pyrometallurgical, hydrometallurgical, or direct  processes.  The time horizon covers 2021 to 2050 with annual granularity, and planning periods are the six 5-year intervals: 2021-2025, 2026-2030, 2031-2035, 2036-2040, 2041-2045, and 2046-2050.  The stages of the multistage scenario tree are the three 10-year intervals: 2021-2030, 2031-2040, and 2041-2050.

Deterministic parameters are computed using the extensive low-level formulas and high-fidelity data from the EverBatt model \citep{dai2019everbatt}.  
The parameters $\Delta$, $u$, $f$, $c^{\text{TR,RM}}$, and $c^{\text{TR,RB}}$ are derived from EverBatt.  
We take a discount rate $\gamma = 3\%$ to match the Federal Energy Management Program discount rate for 2021 \citep{kneifel2020FEMP}.  Inventory costs are set as $\rho = 25\%$ of material value \citep{azzi2014inventory}, and we assume that $\eta = 70\%$ of material value is recovered when selling recycled materials to the market.\textsuperscript{\endnote{\cite{pourmohammadi2008sustainable} estimate $\eta$ as $60\%$ for recycled aluminum.  Recycled carbon black is similarly discounted by $\eta \in [70\%,85\%]$ relative to its virgin counterpart \citep{contec2023carbonblack}.}}  

The numbers of recycling facilities and cathode production lines are determined by the number of facilities and lines needed to recycle the largest annual retired battery supply and to manufacture cathode powder to satisfy the largest annual new battery demand during each planning period:
\begin{equation*}
    |\mathcal{N}^{\text{REC}}_l| = \left \lceil \frac{1}{u^{\text{REC}}} \max_{t \in \mathcal{T}_l}\ \max_{\omega \in \Omega_{\sigma_t}}\ \sum_{i \in \mathcal{I},\ z \in \mathcal{Z}} s_{\omega,z,t,i}\right \rceil \quad \text{and} \quad |\mathcal{N}^{\text{CP}}_{l,k}| = \left \lceil \frac{1}{u^{\text{CP}}} \max_{t \in \mathcal{T}_l}\ \max_{\omega \in \Omega_{\sigma_t}}\ \sum_{i \in \mathcal{I},\ z \in \mathcal{Z}}  \Delta^{\text{NB}}_{i,k} d_{\omega,z,t,i} \right \rceil,
\end{equation*}
where the maximum capacities from EverBatt are $u^{\text{REC}} = 100,000$ tonnes/year and $u^{\text{CP}} = 2,000$ tonnes/year \citep{dai2019everbatt}.

\subsection{Battery Supply and Demand Scenarios}  We follow the methodology and data from \cite{xu2020future} for constructing deterministic global EV stock projections under the International Energy Agency's Sustainable Development Scenario (SDS) and Stated Policies Scenario (STEPS) \citep{iea2020scenarios}.  The projections give the total number of active EVs (EV \textit{stock}), including battery and plug-in hybrid EVs, in each year over the 30-year period from 2021 to 2050.  We label these two projections $EV_t^{\text{SDS}}$ and $EV_t^{\text{STEPS}}$.  Time periods are standardized, so $t = 1$ corresponds to 2021 and $T = 30$ to 2050.  The parameter $EV_0 = EV_0^{\text{SDS}} = EV_0^{\text{STEPS}}$ gives the measured number of active EVs in 2020.

We denote the year-over-year rate of change in EV stock under the two projections by $\delta^{\text{EV,SDS}}_t$ and $\delta^{\text{EV,STEPS}}_t$, e.g., $\delta^{\text{EV,SDS}}_t = EV^{\text{SDS}}_t / EV^{\text{SDS}}_{t-1}$.  Let $\delta^{\text{EV}}_t$ be the random variable that represents the rate of change in EV stock for year $t$.  We define the distribution of $\delta^{\text{EV}}_t$ by
$$\delta^{\text{EV}}_t = \frac{1}{2} \left | \delta_t^{\text{EV,SDS}} - \delta_t^{\text{EV,STEPS}} \right | Z^{\text{EV}}_{\sigma_t} + \frac{1}{2} \left ( \delta_t^{\text{EV,SDS}} + \delta_t^{\text{EV,STEPS}} \right ),$$
where the randomness is described by a random variable $Z^{\text{EV}}_\sigma$ for each stage $\sigma \in \mathcal{S}$.  We assume that these random variables are distributed as truncated standard normals on the interval $[-2,2]$ and are independent across stages.  

We generate a discrete distribution by approximating $Z^{\text{EV}}_\sigma$ with $n_d$ values via Gaussian quadrature \citep{miller1983discrete}.  For each index $\xi_{\sigma} \in \lBrack n_d \rBrack$, Gaussian quadrature produces an observation of $Z^{\text{EV}}_{\sigma}$, labeled $\overline{Z}^{\text{EV}}_{\xi_\sigma}$, and a corresponding probability $p^{\text{EV}}_{\xi_\sigma}$.  Gaussian quadrature guarantees that the first $2n_d - 1$ moments of the discrete distributions match those of the underlying truncated normal distributions.  We convert the observations $\overline{Z}^{\text{EV}}_{\xi_\sigma}$ into a set of multistage demand scenarios by considering combinations of observations across stages, i.e., taking the cartesian product over $S$ copies of the observations.  The multistage index is some $\xi \in \lBrack n_d \rBrack ^S$, and the corresponding time series for the observations of the rate of change in EV stock, denoted $\{\delta^{\text{EV}}_{\xi,t}\}_{t \in \mathcal{T}}$, are computed by
$$\delta^{\text{EV}}_{\xi,t} = \frac{1}{2} \left | \delta_t^{\text{EV,SDS}} - \delta_t^{\text{EV,STEPS}} \right | \overline{Z}^{\text{EV}}_{\xi_{\sigma_t}} + \frac{1}{2} \left ( \delta_t^{\text{EV,SDS}} + \delta_t^{\text{EV,STEPS}} \right ).$$
Then, the EV stock observations for the demand scenario, labeled $EV_{\xi,t}$ are given by $$EV_{\xi,t} = EV_0 \prod_{t' = 1}^t \delta^{\text{EV}}_{\xi,t'}.$$
This approach relies on the independence of $Z^{\text{EV}}_\sigma$ across stages, which implies the independence of the rate of change $\delta^{\text{EV}}_t$ in different stages.  However, the observations of EV stock are not independent across stages, as they depend on the history of the random variable $\delta^{\text{EV}}_t$.  Each observation $EV_{\xi,t}$ only depends on the realization of $\overline{Z}^{\text{EV}}_{\sigma}$ at the current or prior stages, which is consistent with how information is revealed over stages of a scenario tree.

We next separate the number of active EV batteries by age.  Using data from \cite{xu2020future}, we designate $LS_a$ as the percentage of batteries of age $a$ that retire at their current age, with a maximum lifespan of $A$.  Denote by $AB_{\xi,t,a}$ the number of active batteries in year $t$, demand scenario $\xi$ that are $a$ years old.  The parameter $AB_{\xi,t,a}$ is computed recursively, 
$$AB_{\xi,t,a} = \begin{cases}
    (1 - LS_a) AB_{\xi,t-1,a-1} & a > 0\\
    \left [ EV_{\xi,t} - EV_{\xi,t-1} + {\displaystyle\sum_{a' = 1}^{A}} LS_{a'} AB_{\xi,t-1,a'-1} \right ] ^+ & a = 0
\end{cases} \quad \forall \xi \in \lBrack n_d \rBrack^S,\ t \in \mathcal{T},\ a \in \{0,\dots,A\},$$
where $[\cdot]^+ = \max\{\cdot,0\}$.  By this definition, the number of new ($a = 0$) batteries is the sum of the year-over-year increase in active batteries and the number of retired batteries, and older ($a>0$) batteries are reduced by the retirement proportion $LS_a$ as they age.  The initial state $AB_{\xi,0,a}$ is taken from historical data \citep{iea2020scenarios,iea2023explorer}.

We convert from battery counts to mass and split by cathode chemistry, distinguishing between two projections for market shares of battery chemistries.  \cite{xu2020future} also provide these projections: in the first (the NCX projection), NMC chemistries are more prominent, and in the second (the LFP projection), LFP chemistries are more prominent.  We label these market share projections $MS_{\text{NCX},t,i}$ and $MS_{\text{LFP},t,i}$, representing the proportion of new battery sales that are of chemistry $i$ by year and projection.  These parameters include historical data through year $-A$. 
We now index by market share projection and EV stock observation: $(\psi,\xi) \in \{\text{NCX},\text{LFP}\} \times \lBrack n_d \rBrack^S$.  
To account for changes in average battery size due to differences in battery EV (BEV) and plug-in hybrid (PHEV) adoption, we compute an additional parameter $m_{\xi,t,i}$, representing the average mass of a new battery by chemistry.  The computation of $m_{\xi,t,i}$ is described in Section~\ref{sec:ecCCParameter} of the appendix.  

Supply and demand for new batteries by chemistry is then given, respectively, by
$$s_{(\psi,\xi),z,t,i} = \beta_z m_{\xi,t,i} \sum_{a = 1}^{A} MS_{\psi,t-a,i} LS_a AB_{\xi,t-1,a-1} \quad \text{and} \quad d_{(\psi,\xi),z,t,i} = \beta_z m_{\xi,t,i} MS_{\psi,t,i} AB_{\xi,t,0},$$
where $\beta_z \in (0,1]$ is a scale parameter that gives the proportion of the global EV market allocated to each zone $z$.  Figure~\ref{fig:sdScenarios} shows a set of supply and demand scenarios generated with this method.

\begin{figure}[tp]
\begin{center}
    {\begin{subfigure}[t]{.48\textwidth} \centering
    \includegraphics[width=\linewidth]{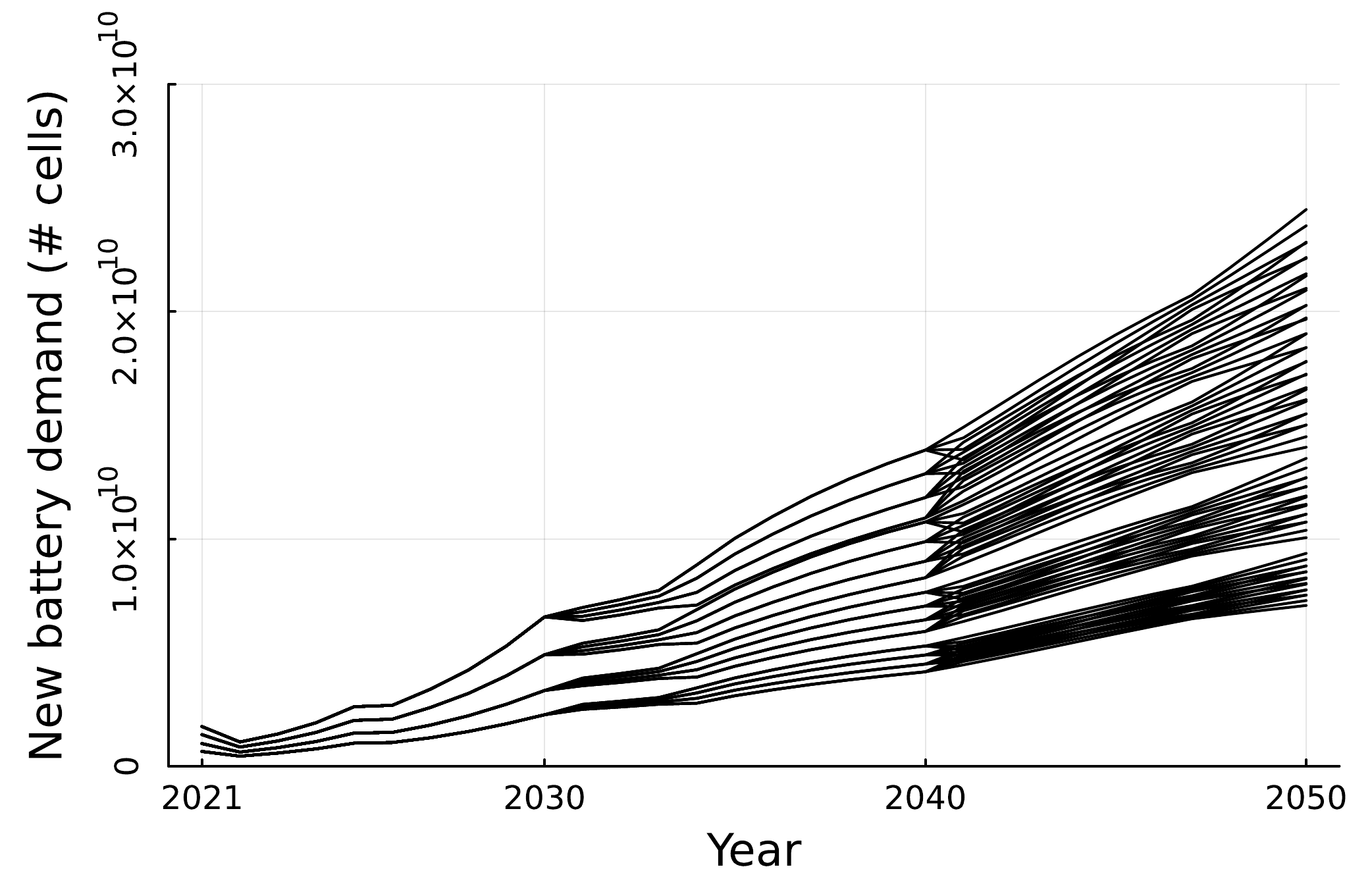}
    \captionsetup{font=footnotesize}
    \caption{}
    \end{subfigure}
    \hspace{0.2cm}
    \begin{subfigure}[t]{.48\textwidth} \centering
    \includegraphics[width=\linewidth]{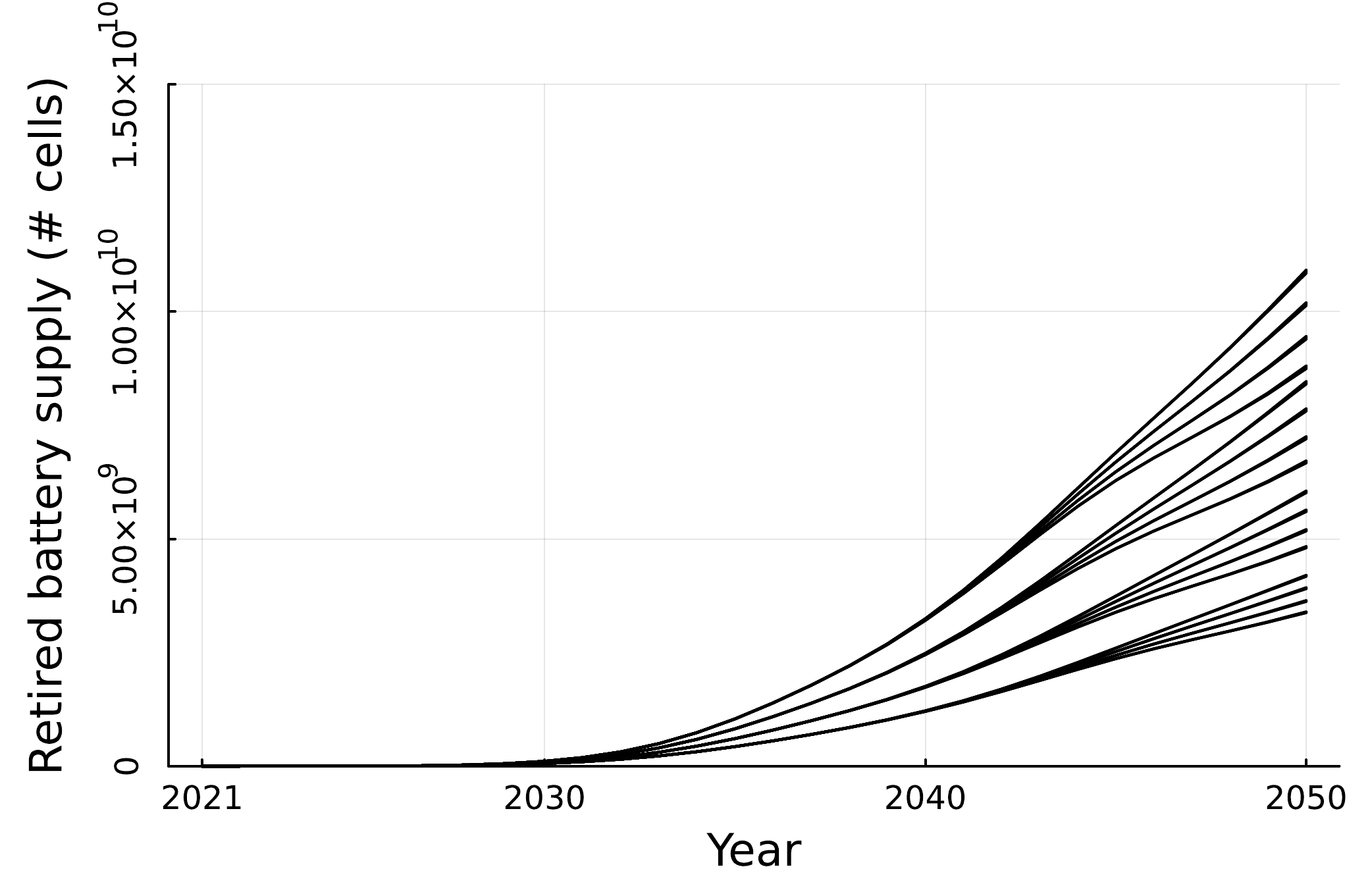}
    \captionsetup{font=footnotesize}
    \caption{}
    \end{subfigure}}
\caption{A set of three-stage ($S = 3$) new battery demand (a) and recycled battery supply (b) scenarios for chemistry NMC(622) with $n_d = 4$ and $\beta = 1$.} \label{fig:sdScenarios}
\end{center}
\end{figure}

\subsection{Cost Scenarios}

We assume that changes in cost coefficients are due to fluctuations in the value of ``critical minerals'': copper (Cu), cobalt (Co), lithium (Li), manganese (Mn), and nickel (Ni).  Let $\mathcal{M} = \{\text{Cu}, \text{Co}, \text{Li}, \text{Mn}, \text{Ni}\}$ be this set of metals.  From this assumption, only material costs and values vary by scenario and time; all other costs are static at the values from EverBatt.

To account for correlations between EV demand and metal prices, we construct $n_c$ cost scenarios for each demand scenario $(\psi,\xi) \in \{\text{NCX},\text{LFP}\} \times \lBrack n_d \rBrack^S$.  We leverage metal price projections from \cite{pescatori2021energy}, who construct forecasts in line with the assumptions of the SDS and STEPS projections.  For each metal, this work fits a vector autoregressive model to predict metal production and metal price.  The regression depends on three randomly distributed exogenous shocks to the metal market, namely, an aggregate demand shock, a metal-specific demand shock, and a metal supply shock.  Following the approach of \cite{antolin2021structural}, a conditional distribution of the future metal price time series is estimated by a sampling algorithm, where the distribution is conditioned so that predicted metal production matches the projected values from SDS or STEPS and the aggregate demand and metal supply shocks follow their unconditional distributions.  Effectively, this conditional distribution is of predicted metal prices that result from a series of metal demand shocks that generate the projected metal production levels from SDS or STEPS.  From these results, we take for each metal an annual median price estimate and a 40\% highest posterior density (HPD) region under each of the SDS and STEPS projections from 2020 to 2040 \citep[Fig. 4]{pescatori2021energy}.\textsuperscript{\endnote{\cite{pescatori2021energy} do not provide projections for manganese prices.  However, we find that manganese and copper prices are highly correlated.  Price data from \cite{imf2023pcps} between 2013 and 2021 show that manganese and copper prices have a correlation coefficient of $0.91$.  As validation, a correlation of prices between 1950 and 2010 from \cite{usgs2013metalprice} yields a coefficient of $0.85$.  Given this high correlation, we predict the median value and HPD bounds for manganese from the corresponding values for copper using a linear regression fitted on data from \cite{imf2023pcps}.}}  The data is extended to 2050 by setting the values for years 2041-2050 to the value from 2040.  Let $v^{\text{MED,SDS}}_{t,m}$ and $v^{\text{MED,STEPS}}_{t,m}$ be the median estimates for each projection, and $[v^{\text{LB,SDS}}_{t,m},v^{\text{UB,SDS}}_{t,m}]$ and $[v^{\text{LB,STEPS}}_{t,m},v^{\text{UB,STEPS}}_{t,m}]$ be the HPD intervals for each metal $m$.

As demand and cost data are associated with the same underlying International Energy Agency projections (i.e., SDS and STEPS), we use the relationship between demand scenarios and the projections to produce a set of scenario-adjusted cost distribution parameters.  First, we characterize each demand scenario by a multiplier
$$\lambda_{\xi,t} = \underset{[0,1]}{\proj} \left (\frac{ EV_{\xi,t} - EV^{\text{STEPS}}_t}{EV^{\text{SDS}}_t - EV^{\text{STEPS}}_t} \right ).$$
This multiplier characterizes in each year how similar the EV stock realization is to the SDS and STEPS projections, where values near $1$ imply similarity to SDS and values near $0$ similarity to STEPS.  We compute the scenario-adjusted distribution parameters
$v^{\text{MED}}_{\xi,t,m}$, $v^{\text{LB}}_{\xi,t,m}$, and $v^{\text{UB}}_{\xi,t,m}$ from these multipliers, e.g.,
$$v^{\text{MED}}_{\xi,t,m} = v^{\text{MED,SDS}}_{\xi,t,m} \lambda_{\xi,t} + v^{\text{MED,STEPS}}_{\xi,t,m} (1 - \lambda_{\xi,t}).$$
Under this formula, for demand scenarios that are closer to the SDS projection than STEPS, the cost distribution parameters are also closer to their SDS counterpart.  This introduces reasonable relationships between EV demand and metal prices.

Let $v_{\xi,t,m}$ be the random variable that represents the price of metal $m$, with the distribution
$$v_{\xi,t,m} = \left ( \frac{v^{\text{UB}}_{\xi,t,m} - v^{\text{MED}}_{\xi,t,m}}{\Phi^{-1}(0.7)} \mathbb{I}(Z^{\text{COST}} \geq 0) + \frac{v^{\text{LB}}_{\xi,t,m} - v^{\text{MED}}_{\xi,t,m}}{\Phi^{-1}(0.3)} \mathbb{I}(Z^{\text{COST}} < 0) \right ) Z^{\text{COST}} + v^{\text{MED}}_{\xi,t,m},$$
where $\Phi$ is the cumulative distribution function for the standard normal distribution.  Here, the randomness is described by a single random variable $Z^{\text{COST}}$ that is distributed as a truncated standard normal on $[\Phi^{-1}(0.3),\Phi^{-1}(0.7)]$.  The marginal distributions for each $v_{\xi,t,m}$ are described by two truncated normal distributions, one for samples above the median and another for those below.  The distributions are constructed so that $v^{\text{MED}}_{\xi,t,m}$ matches the median of $v_{\xi,t,m}$ and the HPD interval $[v^{\text{LB}}_{\xi,t,m},v^{\text{UB}}_{\xi,t,m}]$ defines an equal-tailed 40\% credible interval for $v_{\xi,t,m}$.  
We again generate a discrete distribution via two Gaussian quadratures for $Z^{\text{COST}}$, one on the interval $[\Phi^{-1}(0.3),0]$ and one on $[0,\Phi^{-1}(0.7)]$, with $\left \lfloor \frac{n_c}{2} \right \rfloor$ samples allocated to each.  The quadrature yields values $v_{(\xi,\zeta),t,m}$ and probabilities $p^{\text{COST}}_{\zeta}$ for each scenario $\zeta \in \lBrack n_c \rBrack$.\textsuperscript{\endnote{The quadrature probabilities are reweighted by $\frac{1}{2}$ to account for $\mathbb{P}(Z^{\text{COST}} \geq 0)$ and $\mathbb{P}(Z^{\text{COST}} < 0)$.}}  This approach only guarantees that the first $n_c - 2$ moments of the discrete distribution match the underlying distribution, but ensures that scenario observations across time lie at the same cumulative probability level of the underlying distributions.  Although the observations of $Z^{\text{COST}}$ from the Gaussian quadrature are identical in each stage, the distribution parameters $(v^{\text{MED}}_{\xi,t,m},v^{\text{LB}}_{\xi,t,m},v^{\text{UB}}_{\xi,t,m})$ are derived from the demand observation and therefore follow the same multistage structure as in the demand scenarios.

From the metal price observations, we construct scenario-dependent cost parameters.  Consider some material cost parameter $c$ (e.g., cost of lithium carbonate, cost of LFP cathode powder), where $\overline{c}$ gives the deterministic value of this parameter from EverBatt.  Let $PC_m$ be the proportion by mass of the material composed of the metal $m$.  We compute the base cost $c^{\text{BASE}}_{\xi} = \overline{c} - \sum_{m \in \mathcal{M}} PC_m v^{\text{MED}}_{\xi,0,m}$, which is the cost of the material excluding the value of the metals it contains.  For cost index $\zeta$, contributions from metal prices are added back to the base cost,
$c_{(\xi,\zeta),t} = c^{\text{BASE}}_\xi + \sum_{m \in \mathcal{M}} PC_m v_{(\xi,\zeta),t,m}.$
Then, the cost coefficients in \eqref{eq:constrCostOP} are computed as the sum of the relevant scenario-dependent material cost time series $c_{(\xi,\zeta),t}$ and other utility cost terms.  Figure~\ref{fig:costScenarios} shows a set of cost scenarios generated with this method.

\subsection{Multistage Scenario Tree}
\label{sec:scenarioTree}
We now connect this method of scenario generation to the notation introduced in Section~\ref{sec:model}.  The set of scenario tree nodes at stage $\sigma$ is $\Omega_{\sigma} = \{\text{NCX},\text{LFP}\} \times \lBrack n_d \rBrack^\sigma \times \lBrack n_c \rBrack$. The total number of scenarios (i.e., leaf nodes in the multistage tree) is $|\Omega_S| = 2 n_d^S n_c$.  For some stage $\sigma \in \mathcal{S}$ and node $\omega = (\psi,\xi,\zeta) \in \Omega_\sigma$, associated probabilities are given by $p_{\omega} = \frac{1}{2} p^{\text{COST}}_{\zeta} \prod_{\sigma'=1}^\sigma p^{\text{EV}}_{\xi_{\sigma'}}$, demand and supply by $d_{\omega,z,t,i} = d_{(\psi,\xi),z,t,i}$ and $s_{\omega,z,t,i} = s_{(\psi,\xi),z,t,i}$, and material costs by $c_{\omega,t} = c_{(\xi,\zeta),t}$.  The ancestor function for node $\omega \in \Omega_{\sigma}$ is defined as $$a_{\omega}(t) = \begin{cases}
    (\psi,\xi_{1:\sigma_t},\zeta) \quad & \sigma_t \leq \sigma\\
    (\psi,\xi,\zeta) \quad & \text{otherwise},
\end{cases}$$ where $\xi_{1:\sigma_t}$ contains the first $\sigma_t$ entries of $\xi$. If period $t$ is associated with an earlier stage than node $\omega$, the ancestor node is obtained by removing realizations of the demand randomness at stages after $\sigma_t$, while retaining the cost and market share realizations $\zeta$ and $\psi$.  The multistage scenario tree and relative timing of decisions from models \eqref{eq:deterministicP} and \eqref{eq:reformulatedP} are depicted in Figure~\ref{fig:scenarioTree}.

Importantly, this scenario tree structure yields natural groupings for the decomposition discussed in Section~\ref{sec:ssDecomposition}.  As $\zeta$ varies in the set $\lBrack n_c \rBrack$, the scenario supply $s_{\omega,z,t,i}$ and demand $d_{\omega,z,t,i}$ do not change.  Thus, a natural grouping at the first stage is $\mathcal{G}_{\psi,\xi} = \{(\psi,\xi,\zeta)\}_{\zeta = 1}^{n_c}$ for $(\psi,\xi) \in \{\text{NCX},\text{LFP}\} \times \lBrack n_d \rBrack$, with $n_g = 2n_d$.

\begin{figure}[tp]
    \centering
    \begin{minipage}[t]{0.48\textwidth}
        \centering
        \includegraphics[width=\linewidth]{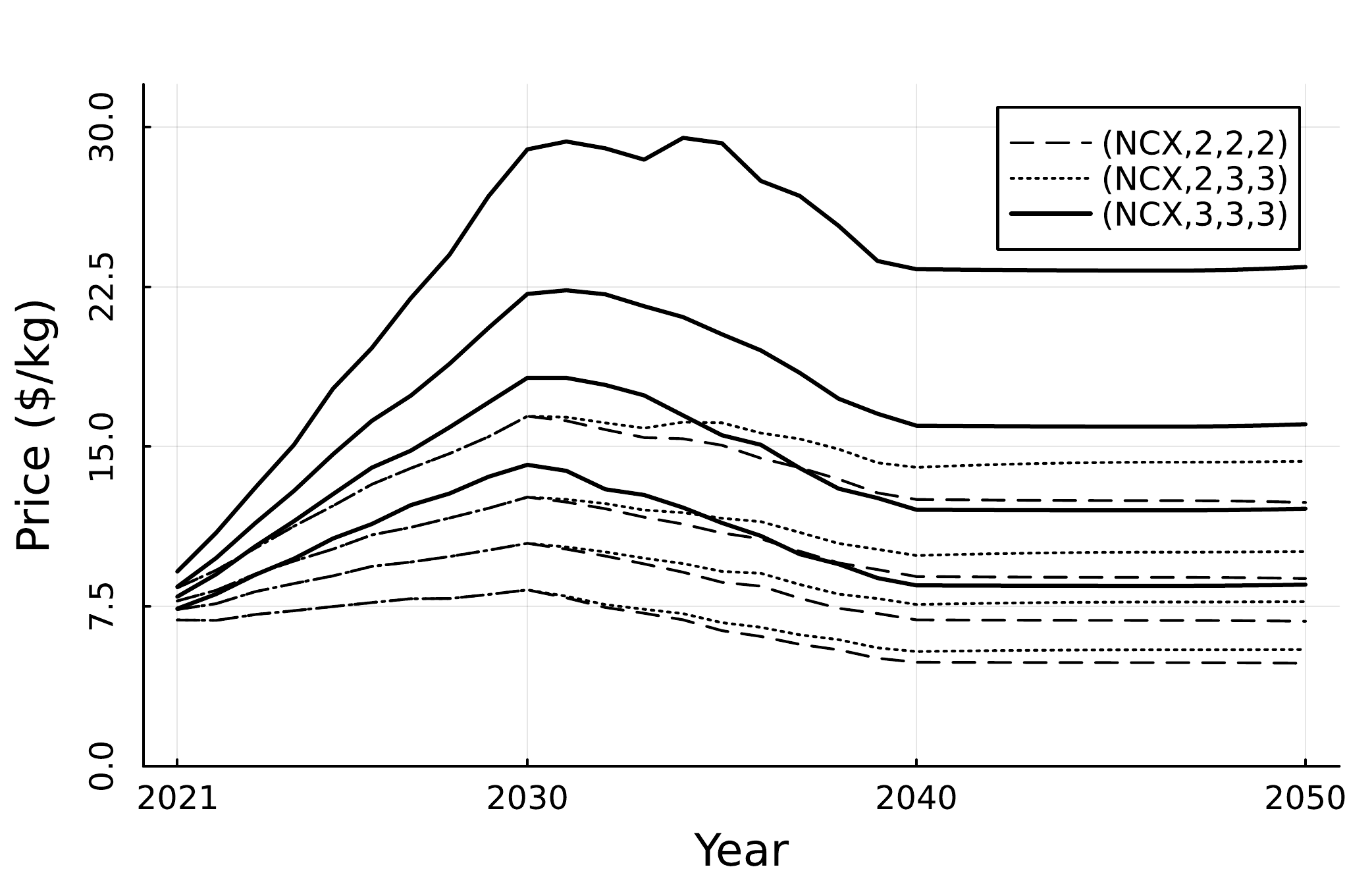}
        \caption{Four cost scenarios ($S=3$, $n_c = n_d = 4$) for the value $v$ of lithium carbonate under demand scenario $(\psi,\xi) = (\text{NCX},2,2,2)$ (low EV demand), demand scenario $(\text{NCX},3,3,3)$ (high demand), and demand scenario $(\text{NCX},2,3,3)$ (mixed demand).} \label{fig:costScenarios}
    \end{minipage}
    \hspace{0.2cm}
    \begin{minipage}[t]{0.48\textwidth}
        \centering
        \begin{tikzpicture}[scale=.75, transform shape]

            \tikzstyle{circ} = [draw, circle, minimum height=3em, minimum width=3em, font=\small]
        
            \node[draw,rectangle,dashed] (top) at (0,0) {Capacity Decision $y$};
            \node[circ, below left=3em and 5em of top.south] (NCX) {NCX};
            \node[circ, below right=3em and 5em of top.south] (LFP) {LFP};
        
            \draw[-stealth] (top.south) -- (NCX.north);
            \draw[-stealth] (top.south) -- (LFP.north);
        
            \node[circ, below left=3em and 1 em of NCX] (ncx1) {$1$};
            \node[circ, below right=3em and 1 em of NCX] (ncxc) {$n_c$};
            \node[] at ($(ncx1)!1/2!(ncxc)$) {$\dots$};
        
            \node[circ, below left=3em and 1 em of LFP] (lfp1) {$1$};
            \node[circ, below right=3em and 1 em of LFP] (lfpc) {$n_c$};
            \node[] at ($(lfp1)!1/2!(lfpc)$) {$\dots$};
        
            \draw[-stealth] (NCX.south) -- (ncx1.north);
            \draw[-stealth] (NCX.south) -- (ncxc.north);
            \draw[-stealth] (NCX.south) -- ($(ncx1.north)!1/3!(ncxc.north)$);
            \draw[-stealth] (NCX.south) -- ($(ncx1.north)!2/3!(ncxc.north)$);
        
            \draw[-stealth] (LFP.south) -- (lfp1.north);
            \draw[-stealth] (LFP.south) -- (lfpc.north);
            \draw[-stealth] (LFP.south) -- ($(lfp1.north)!1/3!(lfpc.north)$);
            \draw[-stealth] (LFP.south) -- ($(lfp1.north)!2/3!(lfpc.north)$);
        
            \node[circ, below=3em of ncx1] (ncx111) {$1$};
            \node[circ, below=3em of ncxc] (ncx1d1) {$n_d$};
            \node[] at ($(ncx111)!1/2!(ncx1d1)$) (ncx1dots) {$\dots$};
        
            \draw[-stealth] (ncx1.south) -- (ncx111.north);
            \draw[-stealth] (ncx1.south) -- (ncx1d1.north);
            \draw[-stealth] (ncx1.south) -- ($(ncx111.north)!1/3!(ncx1d1.north)$);
            \draw[-stealth] (ncx1.south) -- ($(ncx111.north)!2/3!(ncx1d1.north)$);
        
            \node[circ, below=3em of lfp1] (lfpc11) {$1$};
            \node[circ, below=3em of lfpc] (lfpcd1) {$n_d$};
            \node[] at ($(lfpc11)!1/2!(lfpcd1)$) (lfp1dots) {$\dots$};
        
            \draw[-stealth] (lfpc.south) -- (lfpc11.north);
            \draw[-stealth] (lfpc.south) -- (lfpcd1.north);
            \draw[-stealth] (lfpc.south) -- ($(lfpc11.north)!1/3!(lfpcd1.north)$);
            \draw[-stealth] (lfpc.south) -- ($(lfpc11.north)!2/3!(lfpcd1.north)$);
        
            \node[] at ($(ncx1d1)!1/2!(lfpc11)$) (middots) {$\dots$};
        
            \node[below=3em of ncx1dots.center, rotate=90, anchor=center] {$\cdots$};
            \node[below=3em of ncx111.center, rotate=90, anchor=center] {$\cdots$};
            \node[below=3em of ncx1d1.center, rotate=90, anchor=center] {$\cdots$};
            \node[circ, below=6em of ncx111] (ncx11s) {$1$};
            \node[circ, below=6em of ncx1d1] (ncx1ds) {$n_d$};
            \node[] at ($(ncx11s)!1/2!(ncx1ds)$) {$\dots$};
        
            \draw[-stealth] (ncx111.south)+(0em,-3em) -- (ncx11s.north);
            \draw[-stealth] (ncx111.south)+(0em,-3em) -- (ncx1ds.north);
            \draw[-stealth] (ncx111.south)+(0em,-3em) -- ($(ncx11s.north)!1/3!(ncx1ds.north)$);
            \draw[-stealth] (ncx111.south)+(0em,-3em) -- ($(ncx11s.north)!2/3!(ncx1ds.north)$);
        
            \node[below=3em of lfp1dots.center, rotate=90, anchor=center] {$\cdots$};
            \node[below=3em of lfpc11.center, rotate=90, anchor=center] {$\cdots$};
            \node[below=3em of lfpcd1.center, rotate=90, anchor=center] {$\cdots$};
            \node[circ, below=6em of lfpc11] (lfpd1s) {$1$};
            \node[circ, below=6em of lfpcd1] (lfpdds) {$n_d$};
            \node[] at ($(lfpd1s)!1/2!(lfpdds)$) {$\dots$};
        
            \draw[-stealth] (lfpcd1.south)+(0em,-3em) -- (lfpd1s.north);
            \draw[-stealth] (lfpcd1.south)+(0em,-3em) -- (lfpdds.north);
            \draw[-stealth] (lfpcd1.south)+(0em,-3em) -- ($(lfpd1s.north)!1/3!(lfpdds.north)$);
            \draw[-stealth] (lfpcd1.south)+(0em,-3em) -- ($(lfpd1s.north)!2/3!(lfpdds.north)$);
        
            \node[] at ($(ncx1ds)!1/2!(lfpd1s)$) (middotsS) {$\dots$};
        
            \node[right=2.5em of lfpcd1, align=center, rotate=270, anchor=center] (labelstage1) {Stage 1 \\ Operational \\ Decision $x$};
        
            \node[right=2.5em of lfpdds, align=center, rotate=270, anchor=center] (labelstageS) {Stage $S$ \\ Operational \\ Decision $x$};
        
            \draw[dashed] (ncx111.north -| ncx111.west) rectangle (lfpcd1.south -| lfpcd1.east);
            \draw[dashed] (ncx11s.north -| ncx11s.west) rectangle (lfpdds.south -| lfpdds.east);
            
        \end{tikzpicture}
        \caption{Multistage scenario tree and relative placement of capacity and operational decisions.  Within each market share node (NCX,LFP), there are $n_c$ cost nodes, and there are $n_d$ new demand nodes for each node in the previous stage.  Due to correlations between material cost and EV demand, cost parameters vary between demand nodes.
        } \label{fig:scenarioTree}
    \end{minipage}
    
\end{figure}

\section{Computational Results and Case Studies}
\label{sec:results}

In this section, we computationally evaluate the effectiveness of the aPWL algorithm and decomposition schemes and discuss the insights obtained from the optimal solutions to our models.
Numerical experiments are conducted in Julia 1.9 on dual 48-core AMD EPYC 9474F@3.60 GHz processors with 386 GB of RAM, allowing for parallelization over 96 cores.  Linear and mixed-integer problems are solved with Gurobi 10.0.  We use BARON 23.6.23 with CPLEX 22.1 and the embedded version of Ipopt as the mixed-integer and nonlinear subsolvers.  Problems are solved to a relative optimality tolerance of $10^{-4}$ and a primal feasibility tolerance of $10^{-8}$.
\subsection{Concave Minimization Algorithms} We first evaluate the performance of aPWL (Algorithm~\ref{alg:PWL}) and the impact of the reformulated problem in a setting where subproblems are solved in the extensive form without stage decomposition.  These models have a single operational scenario, where supply and demand are taken from the SDS and NCX projections, and material costs are kept constant at the EverBatt values.  To encourage greater branching on capacity variables, the direct recycling capacity is fixed to 0, encouraging investment in cathode production facilities in the optimal solution.  The model contains a single zone, the United States (U.S.).

For the continuous problem \eqref{eq:deterministicP}, we compare aPWL to a finitely convergent spatial branch and bound algorithm (sB\&B, \citealt{shectman1998finite}) and to the global optimization package BARON \citep{sahinidis1996baron}.  For the reformulation \eqref{eq:reformulatedP}, we compare aPWL to BARON.  In this setting, we do not evaluate sB\&B due to the presence of integrality constraints.
To solve over discontinuous objective functions with BARON, we reformulate \eqref{eq:objFuncForm} with binary variables:
\begin{equation*}
        y \leq u z,\quad f(y) = \sum_i q_i y^{r_i} + w z,\quad z \in \{0,1\}.
\end{equation*}

\begin{table}[tp]
    \centering
    \begin{tabular}{c|c|c|c|c|c|c|c|c|c|c}
        & \multicolumn{6}{c|}{Problem \eqref{eq:deterministicP}} & \multicolumn{4}{c}{Problem \eqref{eq:reformulatedP}}\\
        \hline
        & & \multicolumn{2}{c|}{aPWL} & \multicolumn{2}{c|}{sB\&B} & BARON & & \multicolumn{2}{c|}{aPWL} & BARON \\
        $\beta_{\text{U.S.}}$ & $n_y$ & Time (s) & \# Iter & Time (s) & \# Iter & Time (s) & $n_y$ & Time (s) & \# Iter & Time (s) \\
        \hline
         0.1 & 3,765 & 167.4 & 2 & 2,421.6 & 2,132 & 724.3 & 96 & 2.7 & 2 & 7.0\\
         0.15 & 5,635 & 616.0 & 3 & 1,130.1 & 369 & 1,803.1 & 96 & 5.3 & 2 & 6.7\\
         0.2 & 7,508 & 561.9 & 2 & 2,519.4 & 445 & 3,779.5 & 96 & 9.3 & 2 & 10.0\\
         0.25 & 9,377 & 1,295.4 & 3 & 4,866.2 & 471 & 11,209.6 & 96 & 14.0 & 1 & 16.7\\
         0.3 & 11,250 & 1,008.6 & 2 & 3,438.0 & 123 & 14,298.7 & 96 & 20.1 & 1 & 22.8\\
         0.35 & 13,119 & 2,162.4 & 2 & --- & --- & 24,437.8 & 96 & 30.9 & 1 & 34.3\\
         0.4 & 14,994 & 2,622.0 & 2 & 17,185.0 & 263 & --- & 96 & 40.2 & 1 & 43.1\\
         0.45 & 16,861 & 4,722.1 & 2 & --- & --- & --- & 96 & 50.4 & 1 & 51.9\\
         0.5 & 18,735 & 5,481.8 & 2 & 13,483.6 & 99 & --- & 96 & 62.6 & 1 & 65.8\\
    \end{tabular}
    \caption{Algorithm and formulation comparison for the extensive form of \eqref{eq:deterministicP} and \eqref{eq:reformulatedP}.  A ---\ indicates that the model was not solved within 10 hours.  $n_y$ gives the number of capacity variables, which have concave cost functions.}
    \label{tbl:deterministicResults}
\end{table}

Table~\ref{tbl:deterministicResults} compares performance in the deterministic setting by algorithm and model across a variety of scales, generated by varying the proportion $\beta_{\text{U.S.}}$ of the global EV market that is modeled.  For the original formulation \eqref{eq:deterministicP}, aPWL significantly outperforms all other methods, solving all problems within the time limit, and can terminate up to 14x faster than both sB\&B and BARON.  Moreover, aPWL exhibits more consistent performance than sB\&B as the problem scale increases.  
The reformulation \eqref{eq:reformulatedP} allows for extreme reductions in solve times, allowing global optima to be found in minutes with either aPWL or BARON.  For the reformulation, aPWL slightly outperforms BARON on all problems.

\subsection{Decomposition Approaches} We next evaluate the performance of the Benders' decomposition when solving subproblems within aPWL.  These experiments use instances of the reformulated problem \eqref{eq:reformulatedP} with multiple operational scenarios.  We compare different nodal groupings against solving the subproblem in its extensive form, without decomposition.  The nodal groupings considered are the standard single-cut ($\mathcal{G}_1 = \Omega_1$) and multi-cut ($\mathcal{G}_\omega = \{\omega\}\ \forall \omega \in \Omega_1)$ aggregation strategies, as well as the intermediate grouping discussed in Section~\ref{sec:scenarioTree} ($\mathcal{G}_{\psi,\xi} = \{(\psi,\xi,\zeta)\}_{\zeta = 1}^{n_c}\ \forall (\psi,\xi) \in \{\text{NCX},\text{LFP}\} \times \lBrack n_d \rBrack$).
The problems have two zones: $\mathcal{Z} = \{\text{U.S.},\ \text{China}\}$.  A set of test cases are generated by varying the model scale $(\beta_\text{U.S.},\beta_{\text{China}})$ and the number of scenarios $|\Omega_S| = 2 n_d^S n_c$, with $n_c = n_d$ and $S = 3$ for all problems.  As optimal solutions in this setting have nonzero cathode production capacity, we no longer fix direct recycling capacity to 0.

\begin{table}[tp]
    \centering
    \begin{tabular}{c|c|c|c|c|c|c|c|c}
         & & & \multicolumn{2}{c|}{Benders'} & \multicolumn{2}{c|}{Benders'} & \multicolumn{2}{c}{Benders'}\\
         & & \multicolumn{1}{c|}{Extensive Form} & \multicolumn{2}{c|}{Single-Cut} & \multicolumn{2}{c|}{Grouped-Cut} & \multicolumn{2}{c}{Multi-Cut}\\
         $(\beta_{\text{U.S.}},\beta_{\text{China}})$ & $|\Omega_S|$ & Time (s) & Time (s) & \# Iter & Time (s) & \# Iter & Time (s) & \# Iter\\
         \hline
         \multirow{3}{*}{(0.05,\,0.125)}
         & 162 & 456.9 & 59.8 & 250 (7) & 24.5 & 63 (8) & 30.9 & 52 (7)\\
         & 1,250 & 17,334.7 & 210.4 & 307 (8) & 63.0 & 53 (6) & 100.5 & 46 (4)\\
         & 20,000 & --- & 1,532.5 & 249 (7) & 810.6 & 52 (5) & 930.8 & 37 (3)\\
         \hline
         \multirow{3}{*}{(0.1,\,0.25)} 
         & 162 & 280.0 & 59.2 & 261 (4) & 19.5 & 63 (7) & 19.3 & 45 (3)\\
         & 1,250 & 2,694.4 & 155.5 & 229 (2) & 56.6 & 53 (3) & 68.8 & 36 (4)\\
         & 20,000 & --- & 1,576.2 & 240 (5) & 727.9 & 52 (4) & 1,168.3 & 40 (5)\\
         \hline
         \multirow{3}{*}{(0.15,\,0.375)} 
         & 162 & 356.7 & 44.3 & 203 (1) & 14.2 & 55 (1) & 21.2 & 49 (3)\\
         & 1,250 & 2,019.9 & 175.8 & 270 (7) & 59.6 & 52 (5) & 93.5 & 45 (6)\\
         & 20,000 & --- & 1,500.9 & 240 (4) & 717.7 & 47 (1) & 851.3 & 35 (2)\\
         \hline
         \multirow{3}{*}{(0.2,\,0.5)}
         & 162 & 747.6 & 52.0 & 244 (4) & 16.1 & 64 (2) & 17.5 & 43 (3)\\
         & 1,250 & 2,592.7 & 170.1 & 260 (7) & 57.1 & 48 (2) & 61.7 & 36 (2)\\
         & 20,000 & --- & 1,564.0 & 239 (3) & 759.2 & 50 (4) & 985.3 & 36 (4)
    \end{tabular}
    \caption{Subproblem decomposition algorithm performance.  \# Iter counts the total Benders' iterations across all subproblems, with the number of iterations of Algorithm~\ref{alg:PWL} given in parentheses. A --- indicates that the model was not solved within 10 hours.}
    \label{tbl:stochasticResults}
\end{table}

Table~\ref{tbl:stochasticResults} shows the computational results of the aPWL algorithm with the various decomposition approaches.  Benders' decompositions outperform the extensive form in all test cases.  Further, the grouped-cut aggregation strategy outperforms both single-cut and multi-cut implementations, solving up to 38\% faster than the next fastest approach.  The grouped-cut strategy effectively balances the tradeoff between increased subproblem size from adding many cuts in each iteration and information loss from aggregating cuts.  With grouped-cut aggregation, the decomposition improves solve times by up to 275x over the extensive form.  
As the scale $\beta$ increases, solve times decrease.  This occurs 
due to the use of relative optimality tolerances, with objective values increasing at larger scales.  However, the size of the reformulated model does not grow accordingly.

\subsection{Optimal Investment Strategies}

We analyze the optimal solutions of two cases.  The first (Case~1) has a single zone for the U.S., with $\beta_{\text{U.S.}} = 0.2$.  The second (Case~2) has two zones, one for the U.S. and one for China, with $(\beta_{\text{U.S.}},\beta_{\text{China}}) = (0.2,0.5)$.  Both cases have 20,000 operational scenarios ($S = 3$, $n_c = n_d = 10$).  The $20\%$ and $50\%$ of global EV market shares correspond approximately to projections for market share in the U.S. and China, respectively, through 2030 \citep{iea2023explorer}.  As direct recycling is newer and less commercialized than other recycling processes, we analyze two variations of each case: one where direct recycling facilities can be constructed (labeled DR1) and one where the direct recycling capacity is fixed to $0$ (labeled DR0). 

In Section~\ref{sec:ecZoneCost} of the appendix, we discuss how cost parameters vary between the U.S. and China and provide a sensitivity analysis on these values.  This analysis shows that the optimal objective value and recycling solution are not sensitive to these location-dependent cost parameters, but the cathode production solution exhibits sensitivity to the cost of equipment in China.

\begin{table}[tp]
    \centering
    \begin{tabular}{r|c|c|c|c|c|c}
         & \multicolumn{3}{c|}{Case 1 (U.S.)} & \multicolumn{3}{c}{Case 2 (U.S. + China)}\\
         & R0 & DR0 & DR1 & R0 & DR0 & DR1\\
        \hline
        Cost (billion \$) & 1,732.1 & 1,365.5 & \bf{1,351.5} & 6,062.3 & 4,706.5 & \bf{4,687.4}\\
        Energy Consumption (EJ) & 25.9 & 25.6 & \bf{24.3} & 92.0 & 90.7 & \bf{88.7}\\
        GHG Emissions (Mt) & 1,647.9 & 1,614.2 & \bf{1,535.3} & 5,987.0 & 5,910.6 & \bf{5,790.0}
    \end{tabular}
    \caption{Comparison of optimal solutions in expected cost and environmental impact over model horizon.  R0 indicates the solution with no recycling capacity, DR0 the solution where no direct recycling is allowed, and DR1 the solution where all recycling methods are allowed.
    }
    \label{tbl:solutionValue}
\end{table}

\subsubsection*{Comparison of Impact}  To demonstrate the value of recycling, we compare to a solution where no recycling capacity is constructed (labeled R0) in objective cost, total energy consumption, and greenhouse gas (GHG) emissions.  Energy consumption and GHG emissions are evaluated in expectation over the operational scenarios and aggregated over the model horizon.  These metrics are calculated by the EverBatt environmental impact model \citep{dai2019everbatt} and include the impact of recycling and cathode production processes as well as the upstream contribution of new materials that are purchased.  Variation R0 provides baseline cost and environmental impact values for a solution in which new batteries are manufactured only using new materials.  The improvement in these metrics relative to the R0 variation is due to the replacement of new materials with their recycled counterpart during new battery manufacturing and quantifies the impact of optimal investment in EV battery recycling capacity on the supply chain.

Table~\ref{tbl:solutionValue} compares the metrics in the variations R0, DR0, and DR1.  Incorporating recycling into the supply chain reduces the total objective cost by about $22\%$ in Case~2, variation DR0, with $0.3\%$ additional reduction when direct recycling is allowed (variation DR1).  In Case~1, the cost savings are more significant under direct recycling, increasing from $21\%$ (DR0) to $22\%$ (DR1).  In both cases, energy consumption is marginally reduced ($\sim 1\%$) when non-direct recycling is adopted (DR0), with more significant reductions of $6\%$ in Case~1 and $4\%$ in Case~2 under direct recycling (DR1).  Similarly, GHG emissions are reduced $1-2\%$ by non-direct recycling (DR0), and up to $7\%$ by direct recycling (DR1).  Although there are large, comparable cost savings under direct or non-direct recycling, improvements in environmental impact are more significant under direct recycling, especially when only considering production in the U.S.  This result can be partially attributed to the additional energy consumption and emissions from the cathode production step, which is needed when hydrometallurgical recycling is used but avoided with direct recycling.

\begin{figure}[tp]
\begin{center}
    {\begin{subfigure}[t]{.48\textwidth} \centering
    \includegraphics[width=\linewidth]{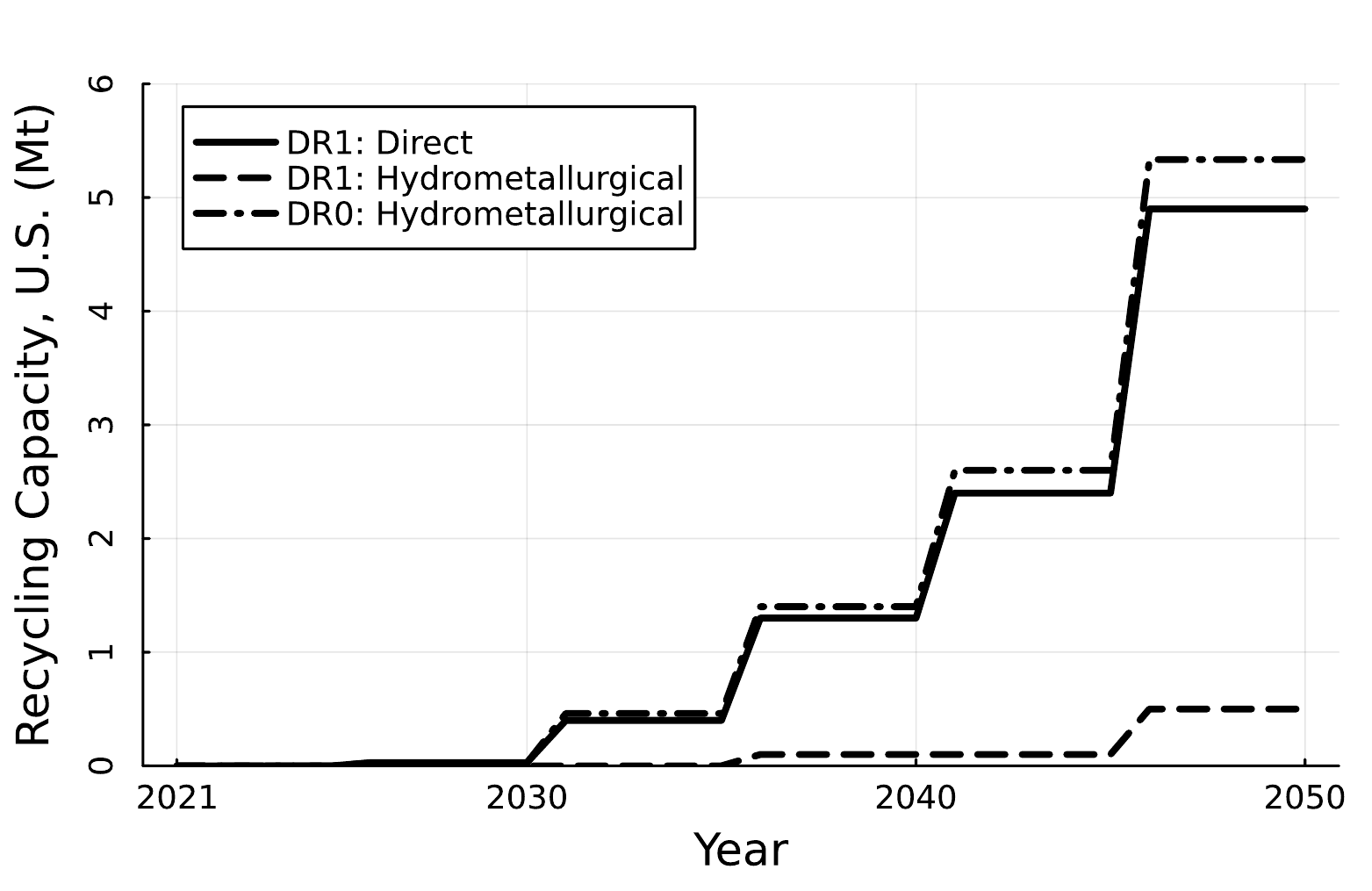}
    \captionsetup{font=footnotesize}
    \caption{Case 1, U.S. Recycling Capacity}
    \end{subfigure}
    \hspace{0.2cm}
    \begin{subfigure}[t]{.48\textwidth} \centering
    \includegraphics[width=\linewidth]{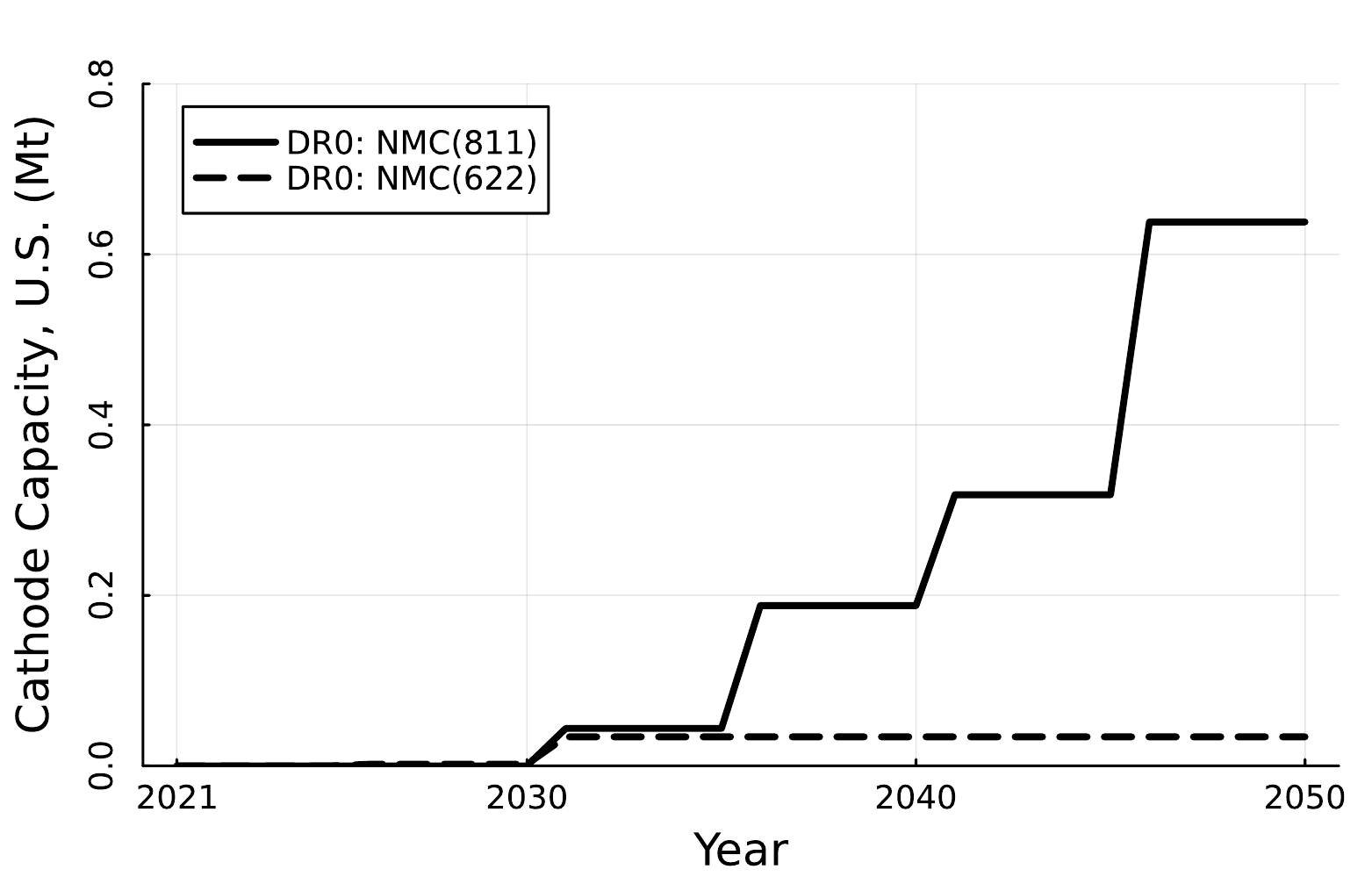}
    \captionsetup{font=footnotesize}
    \caption{Case 1, U.S. Cathode Production Capacity}
    \end{subfigure}
    \newline
    \begin{subfigure}[t]{.48\textwidth} \centering
    \includegraphics[width=\linewidth]{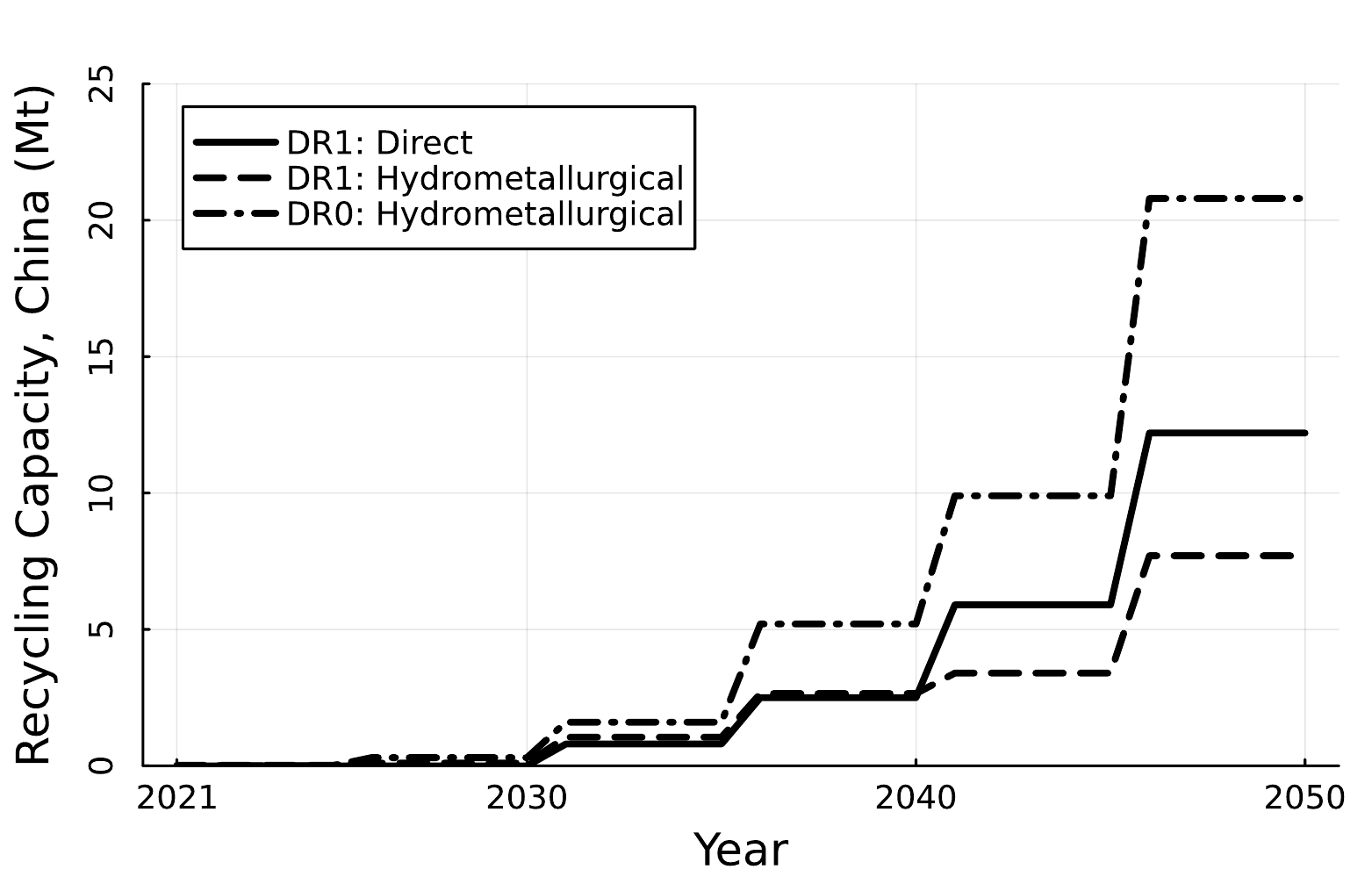}
    \captionsetup{font=footnotesize}
    \caption{Case 2, China Recycling Capacity}
    \end{subfigure}
    \hspace{0.2cm}
        \begin{subfigure}[t]{.48\textwidth} \centering
    \includegraphics[width=\linewidth]{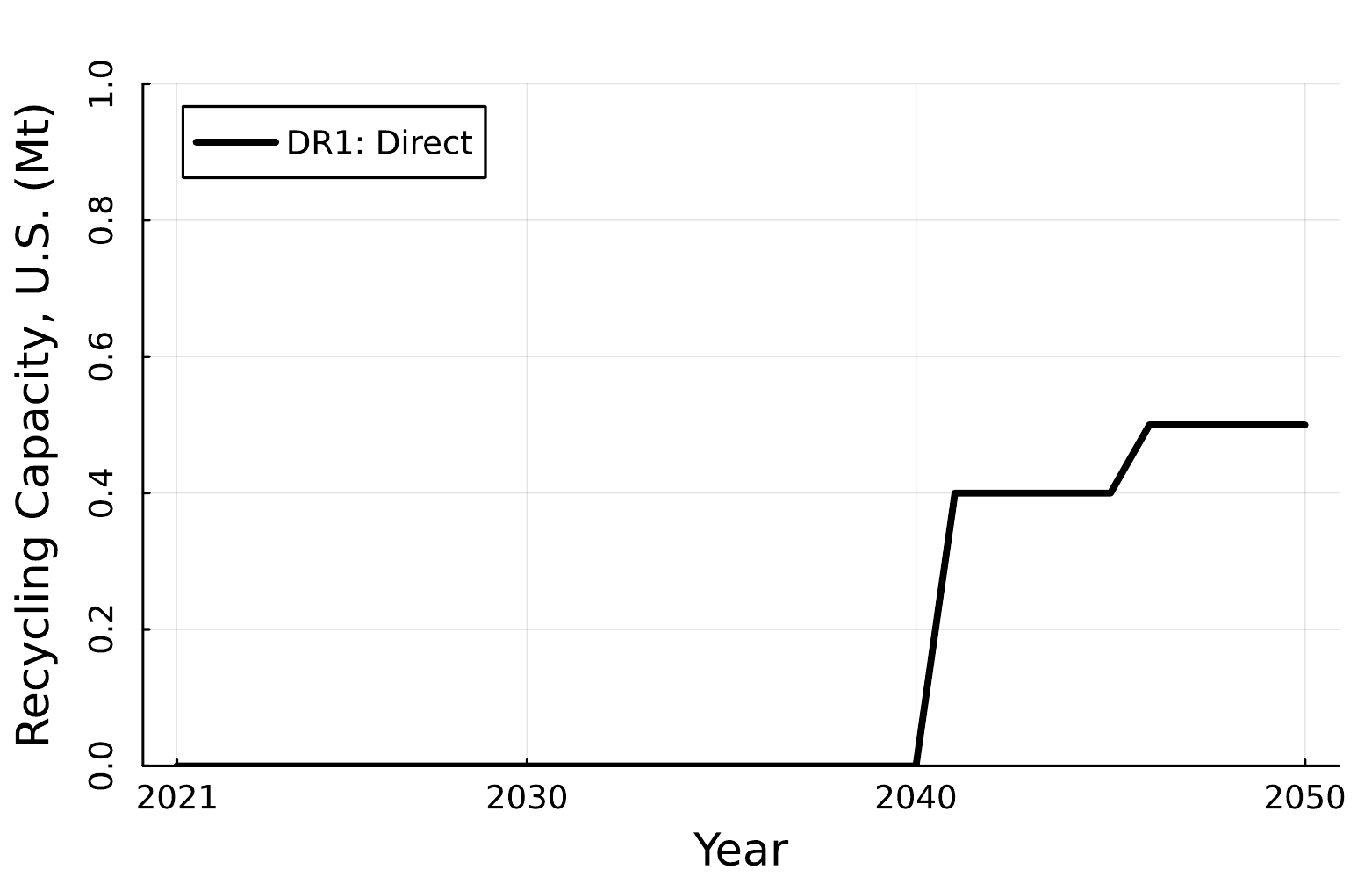}
    \captionsetup{font=footnotesize}
    \caption{Case 2, U.S. Recycling Capacity}
    \end{subfigure}
    \newline
    \begin{subfigure}[t]{.48\textwidth} \centering
    \includegraphics[width=\linewidth]{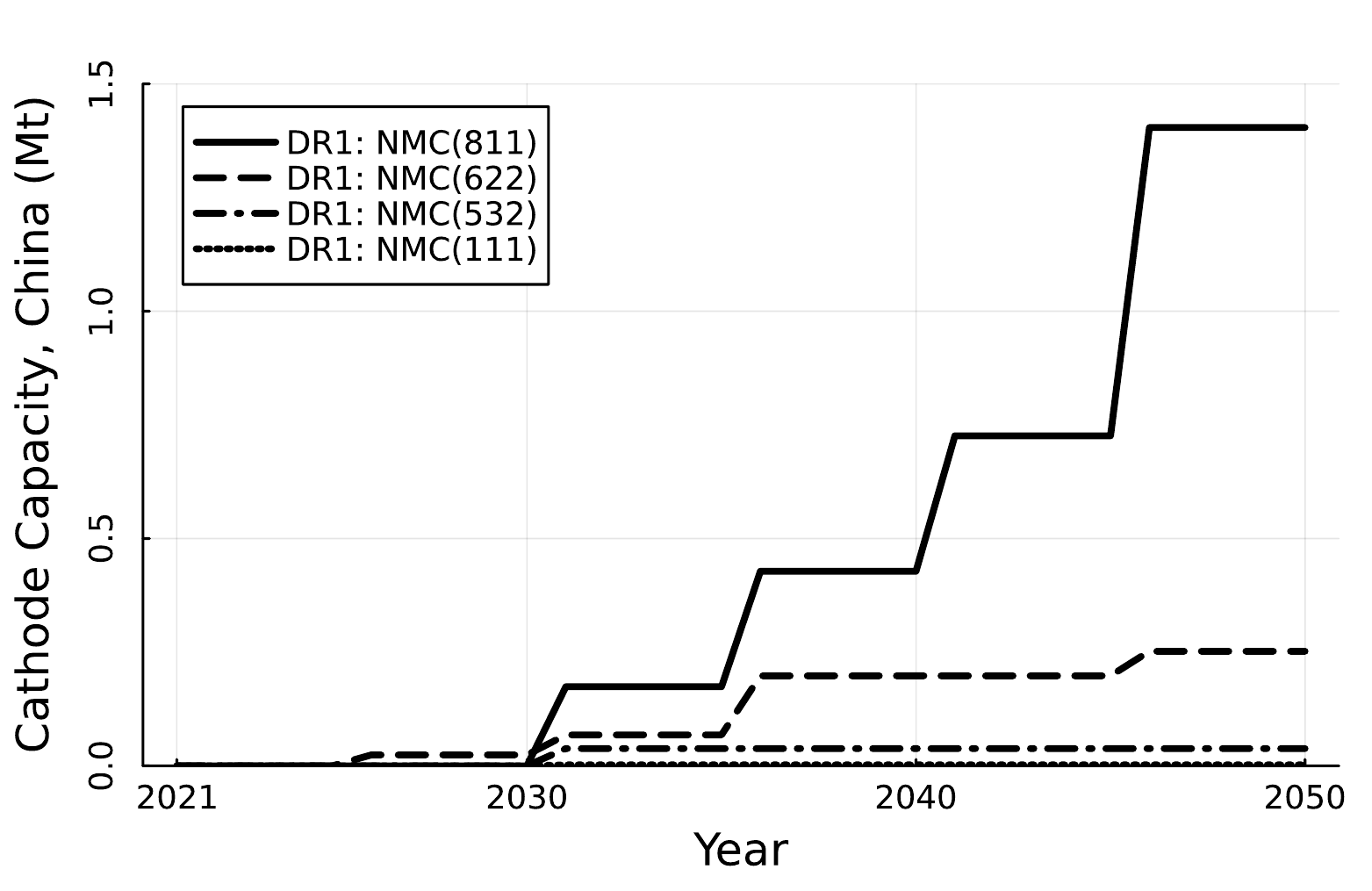}
    \captionsetup{font=footnotesize}
    \caption{Case 2 DR1, China Cathode Production Capacity}
    \end{subfigure}
    \hspace{0.2cm}
    \begin{subfigure}[t]{.48\textwidth} \centering
    \includegraphics[width=\linewidth]{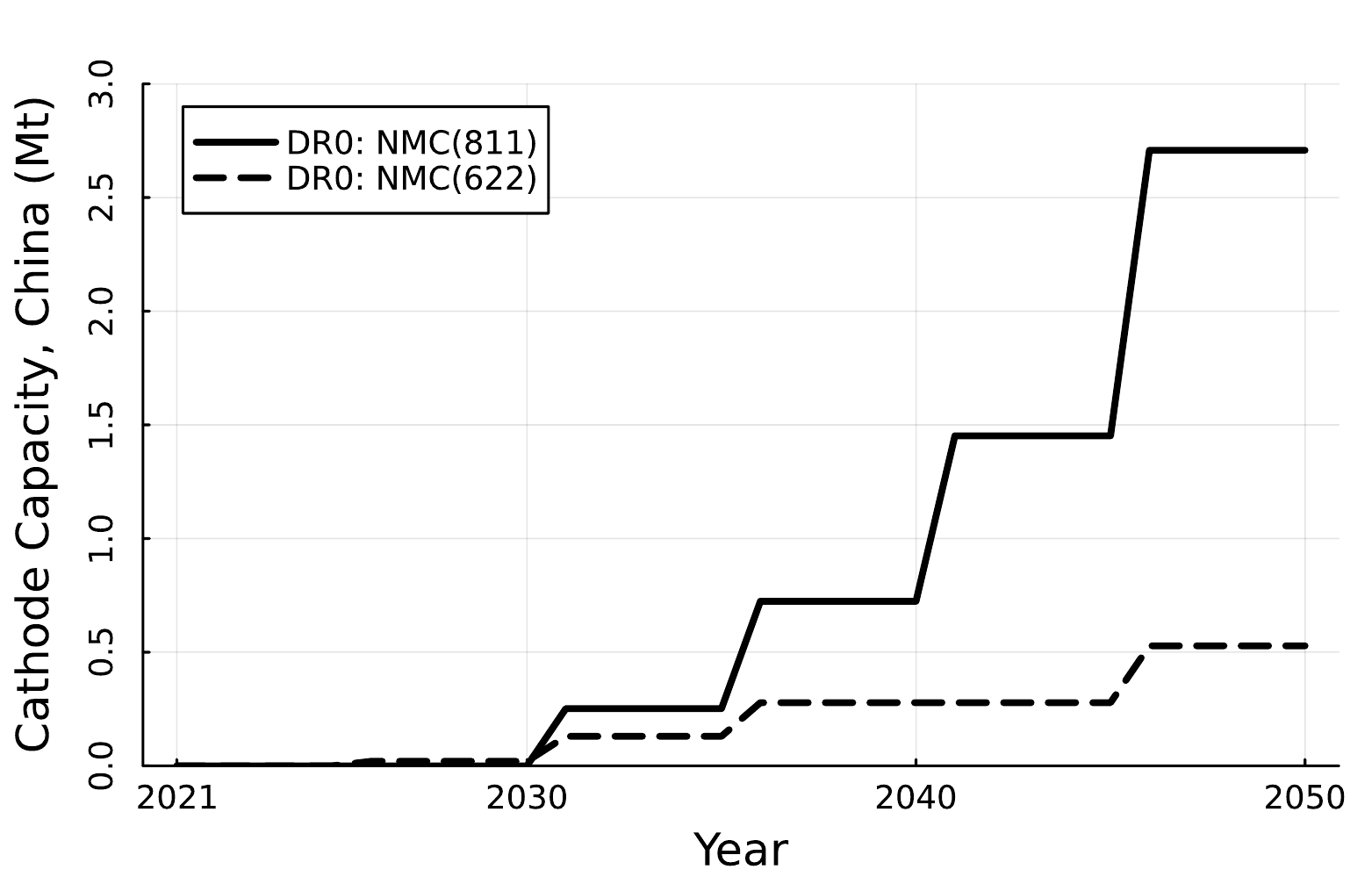}
    \captionsetup{font=footnotesize}
    \caption{Case 2 DR0, China Cathode Production Capacity}
    \end{subfigure}
    }
\caption{Optimal capacity investment decisions for recycling and cathode production facilities by zone. No cathode production capacity is constructed for the DR1 variation of Case 1.  Recycling capacity solutions for DR0 and DR1 models are shown on the same plots.} \label{fig:optCapacity}
\end{center}
\end{figure}

\subsubsection*{Optimal Investment Plan} Figure~\ref{fig:optCapacity} depicts the optimal capacity decisions for the variations DR1 and DR0 of both cases.    In Case~1, when direct recycling is allowed (variation DR1), the U.S. primarily constructs direct recycling capacity starting in 2026, supplemented by a smaller amount of hydrometallurgical capacity. The optimal solution constructs neither pyrometallurgical capacity nor cathode production capacity. When direct recycling cannot be constructed (variation DR0), the U.S. constructs a comparable amount of total recycling capacity, with direct recycling facilities replaced with hydrometallurgical.  To produce cathode powder from recycled material, cathode production capacity for NMC(622) and NMC(811) is constructed in this variation.  As of 2023, $650$ kt of U.S. battery recycling capacity is announced to be completed by 2030 \citep{icct2023capacity}, which is greater than the optimal capacity of $400$ kt recommended by our model in the DR1 variation and the $460$ kt recommended in the DR0 variation.  However, the announced capacity exceeds expectations for the available supply of retired batteries, suggesting potential overinvestment in recycling facilities \citep{abi2024capacity}.

In the DR1 variation of Case~2, a mixture of hydrometallurgical and direct recycling capacity is constructed in China, starting in 2021.  The U.S. constructs only direct recycling capacity in a lesser amount than in Case 1, starting in 2041.  Cathode production capacity for all NMC chemistries is developed in China, and no cathode production capacity is installed in the U.S.  At the operational scale, retired batteries are often transported from the U.S. to China to be recycled, then recycled cathode powder is transported back to the U.S.  In the DR0 variation, only hydrometallurgical recycling capacity is constructed in China, and no recycling capacity is installed in the U.S.  The total installed capacity is comparable to the total amount in the DR1 variation.  Relative to the DR1 variation, NMC(811) and NMC(622) cathode production capacity is increased and capacity for NMC(532) and NMC(111) is eliminated.

\begin{figure}[ht]
    \begin{subfigure}[t]{.48\textwidth} \centering
    \includegraphics[width=\linewidth]{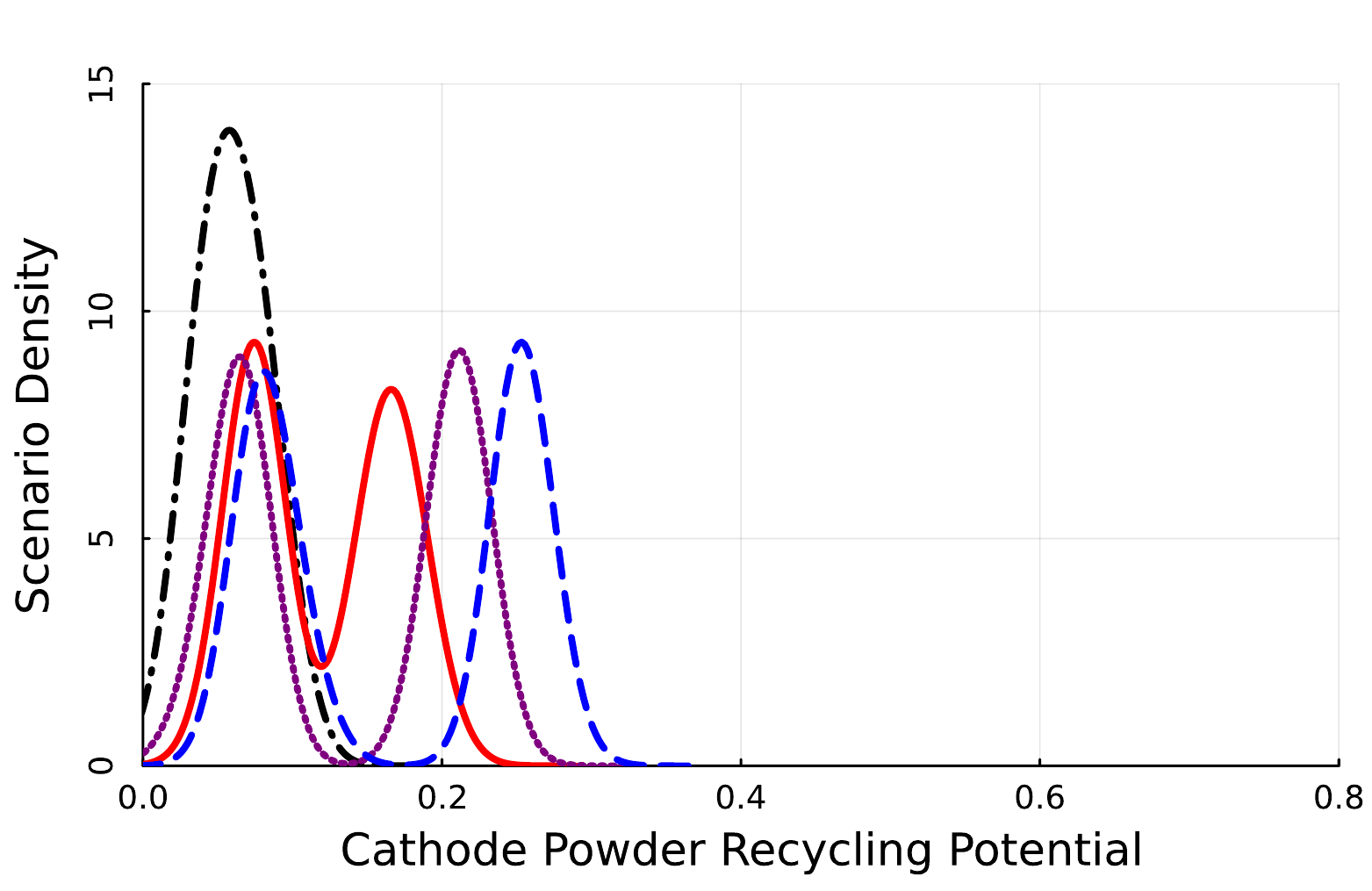}
    \captionsetup{font=footnotesize}
    \caption{Case 1 DR1}
    \end{subfigure}
    \hspace{0.2cm}
    \begin{subfigure}[t]{.48\textwidth} \centering
    \includegraphics[width=\linewidth]{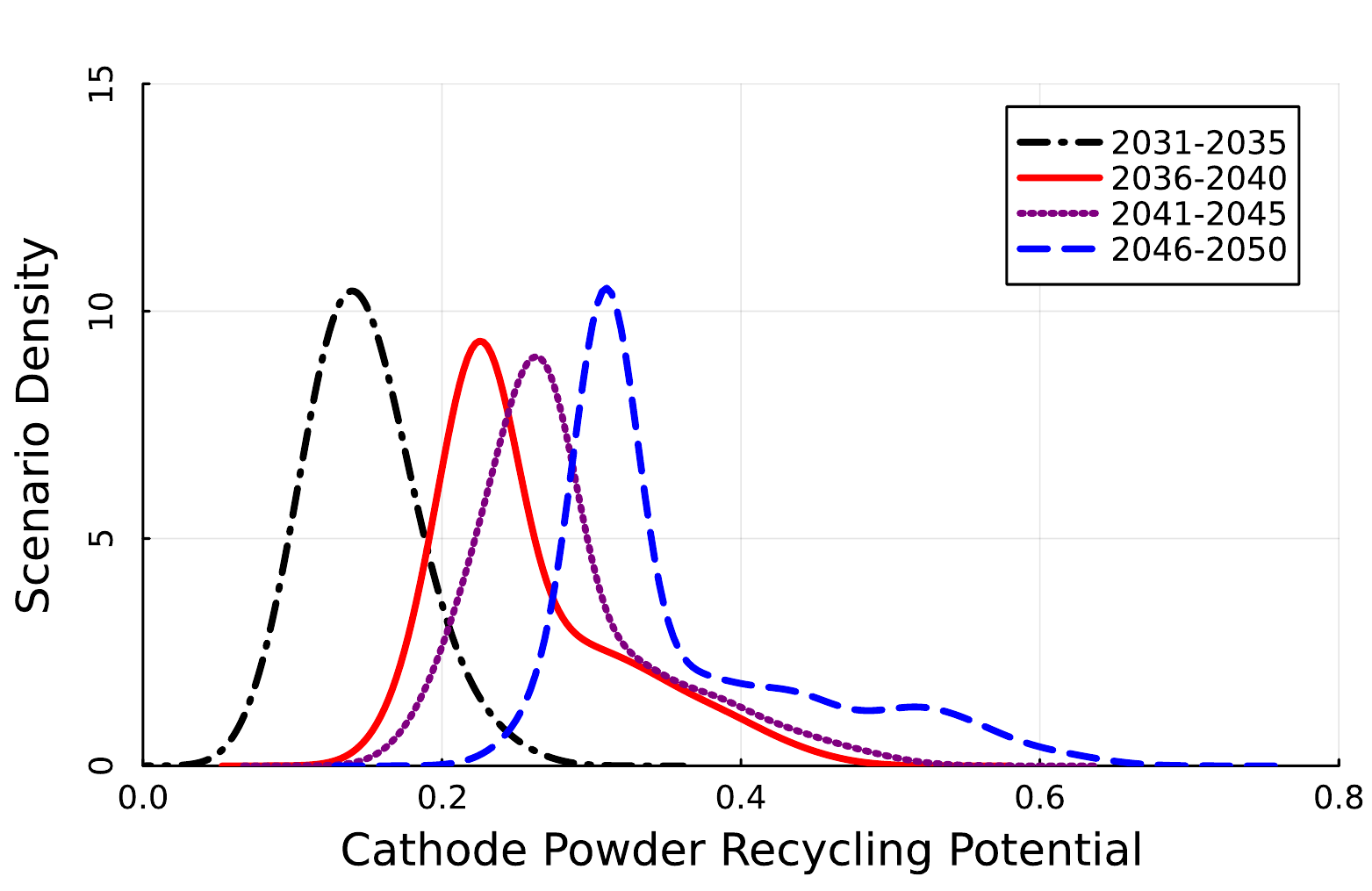}
    \captionsetup{font=footnotesize}
    \caption{Case 1 DR0}
    \end{subfigure}
    \newline
    \begin{subfigure}[t]{.48\textwidth} \centering
    \includegraphics[width=\linewidth]{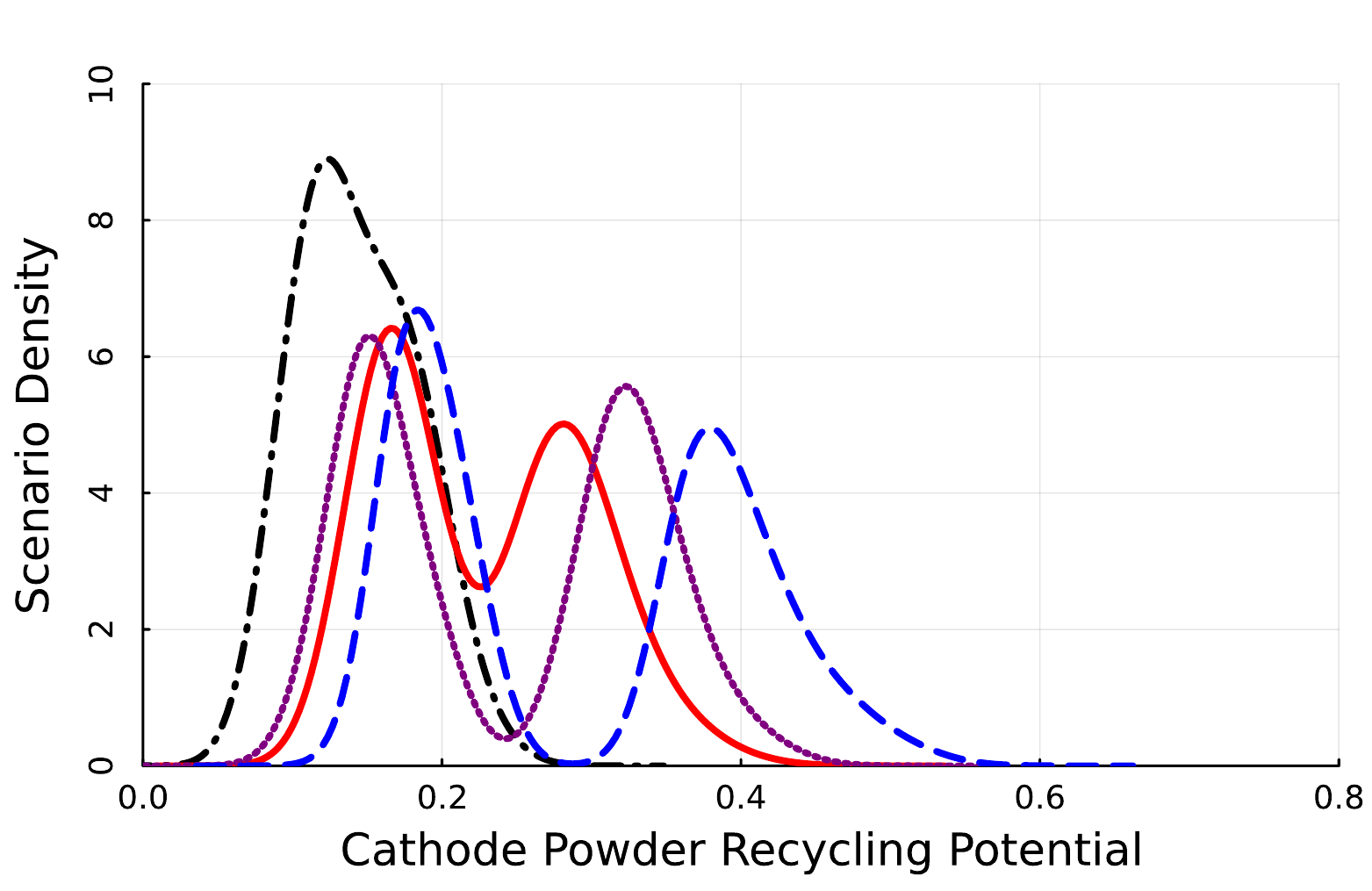}
    \captionsetup{font=footnotesize}
    \caption{Case 2 DR1}
    \end{subfigure}
    \hspace{0.2cm}
    \begin{subfigure}[t]{.48\textwidth} \centering
    \includegraphics[width=\linewidth]{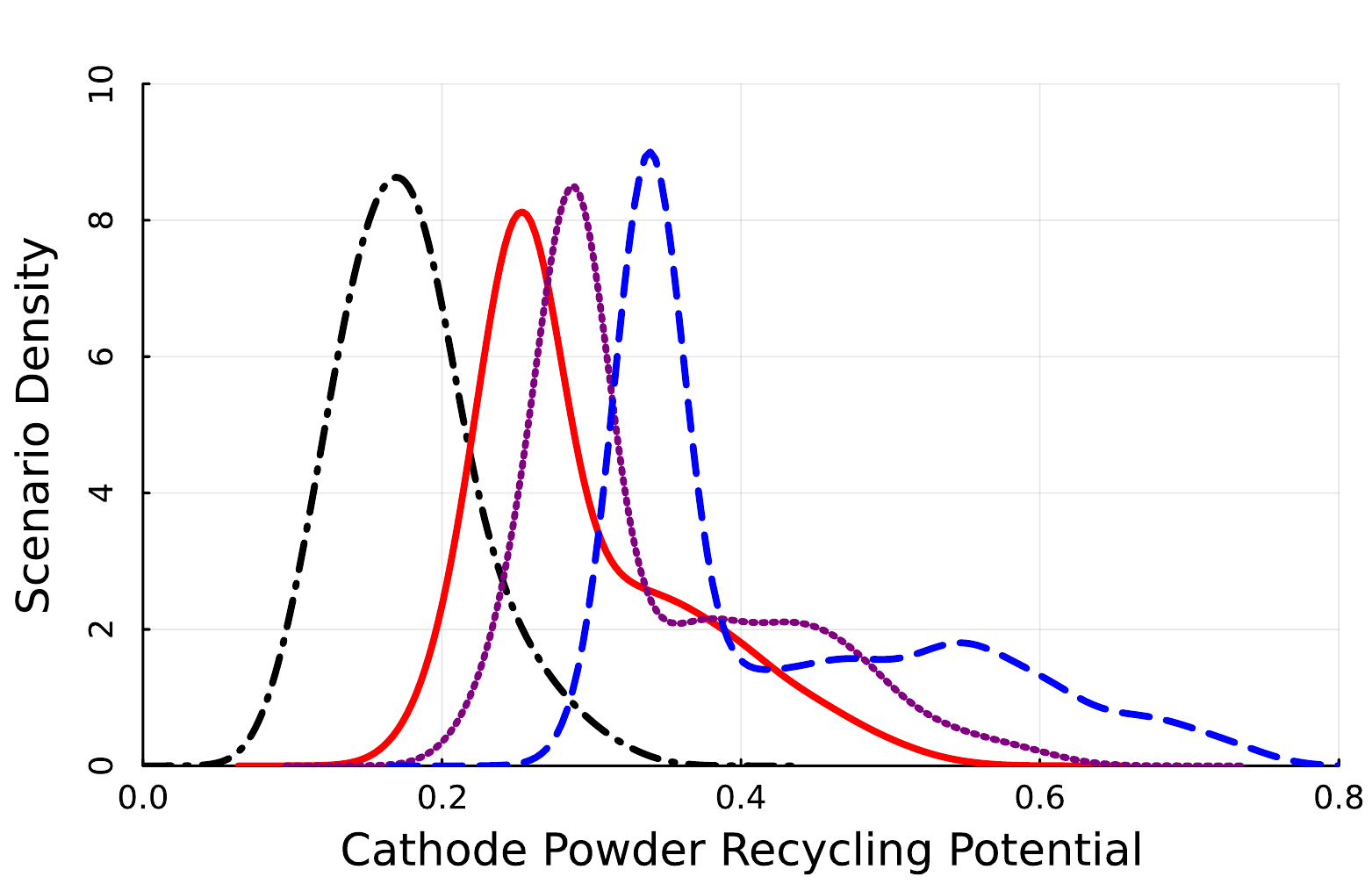}
    \captionsetup{font=footnotesize}
    \caption{Case 2 DR0}
    \end{subfigure}
    \caption{Kernel density estimation of the proportion of cathode powder used in new battery manufacturing that is produced via recycling (i.e., recycling potential of cathode powder) across operational scenarios, indexed by planning period.} \label{fig:recPropDensity}
\end{figure}

\begin{figure}[ht]
    \begin{subfigure}[t]{.48\textwidth} \centering
    \includegraphics[width=\linewidth]{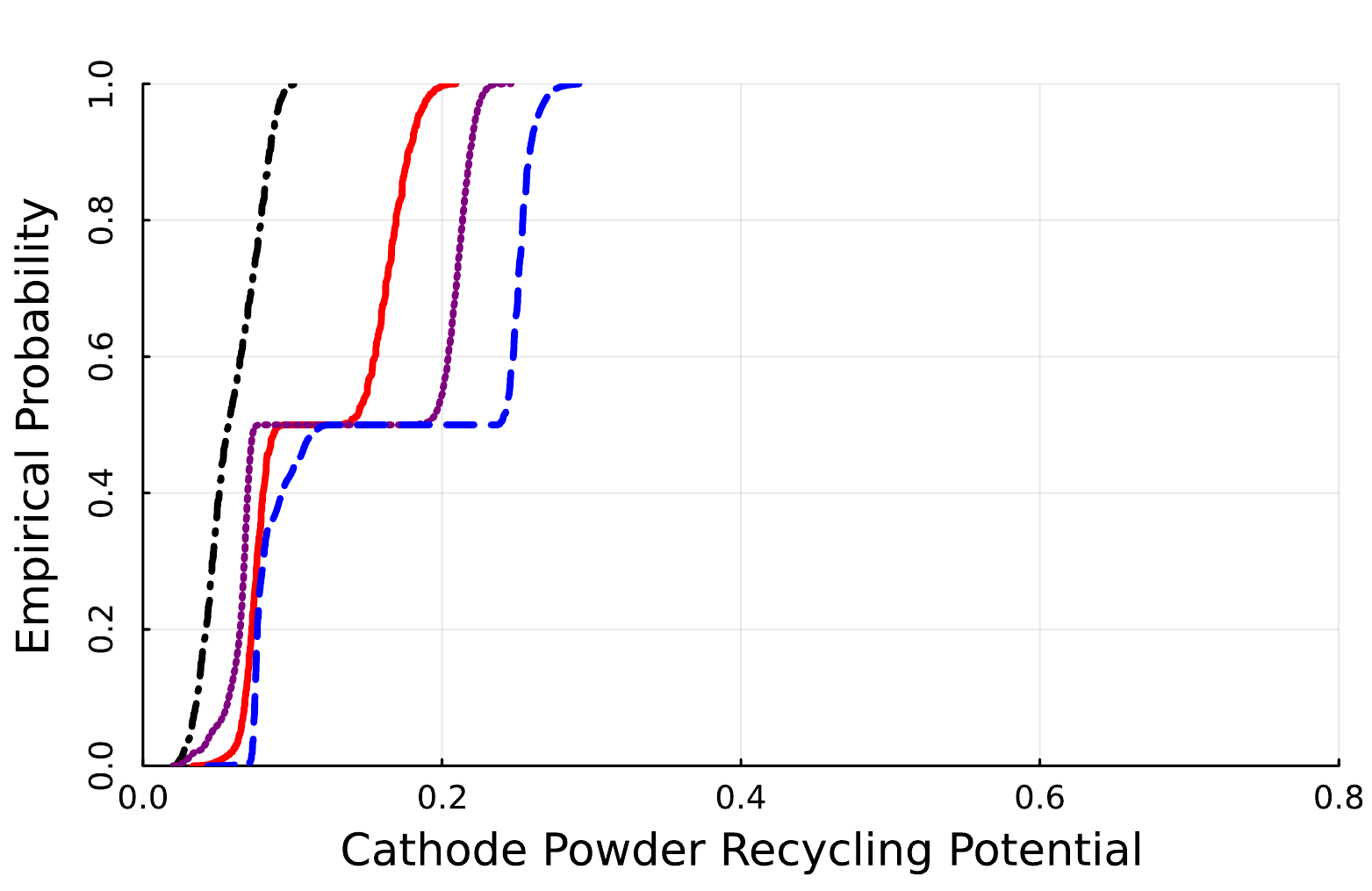}
    \captionsetup{font=footnotesize}
    \caption{Case 1 DR1}
    \end{subfigure}
    \hspace{0.2cm}
    \begin{subfigure}[t]{.48\textwidth} \centering
    \includegraphics[width=\linewidth]{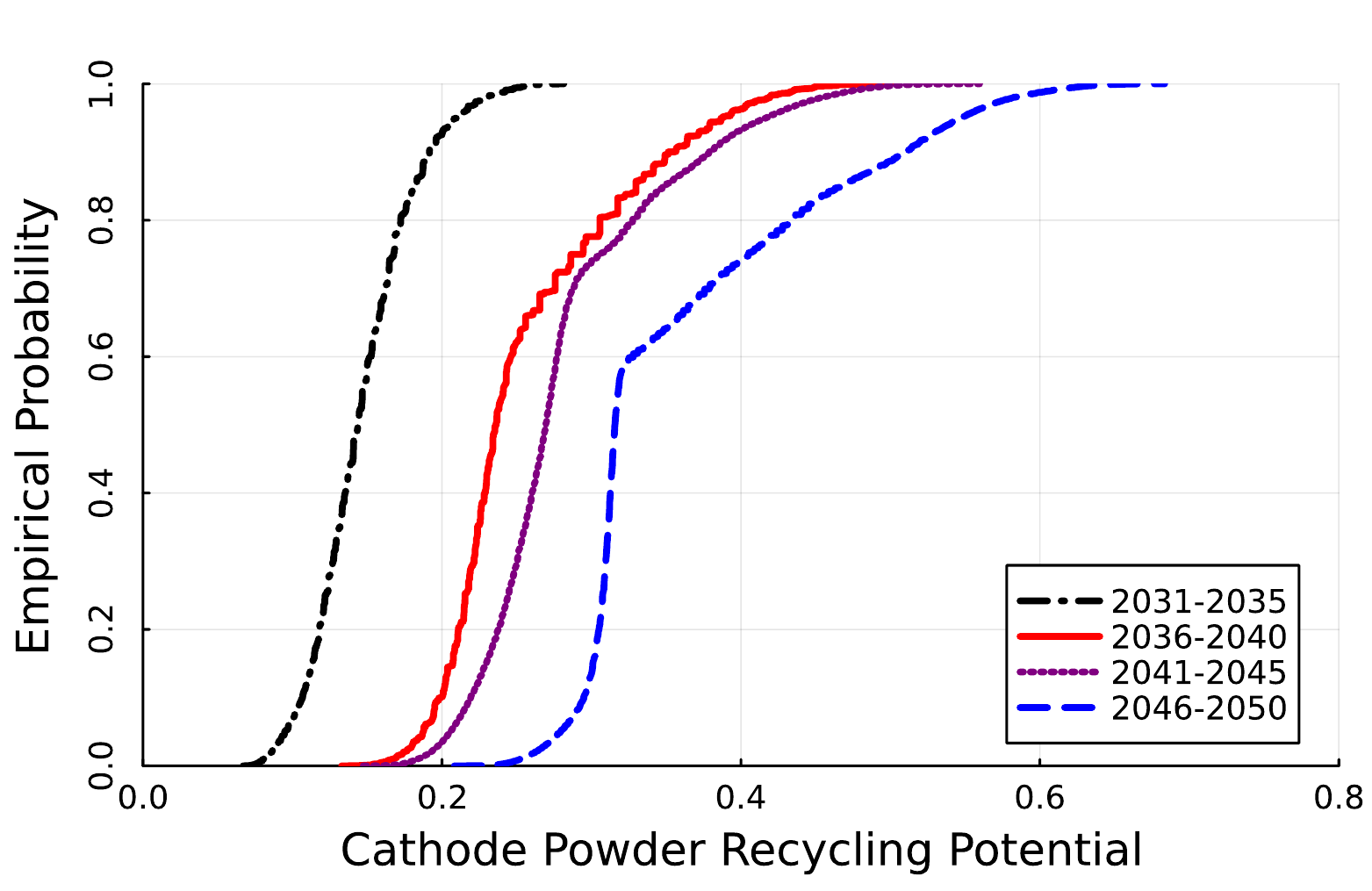}
    \captionsetup{font=footnotesize}
    \caption{Case 1 DR0}
    \end{subfigure}
    \newline
    \begin{subfigure}[t]{.48\textwidth} \centering
    \includegraphics[width=\linewidth]{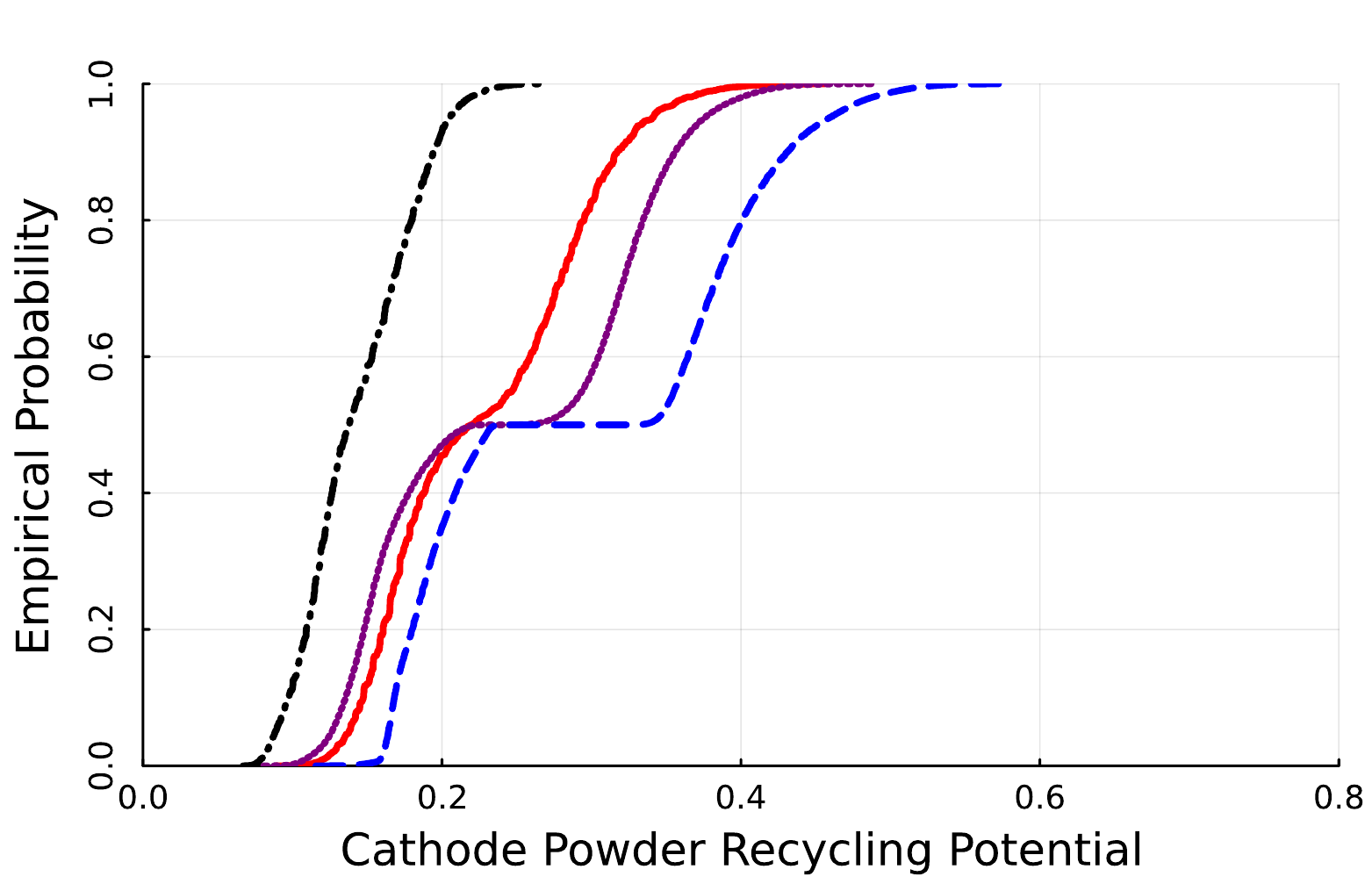}
    \captionsetup{font=footnotesize}
    \caption{Case 2 DR1}
    \end{subfigure}
    \hspace{0.2cm}
    \begin{subfigure}[t]{.48\textwidth} \centering
    \includegraphics[width=\linewidth]{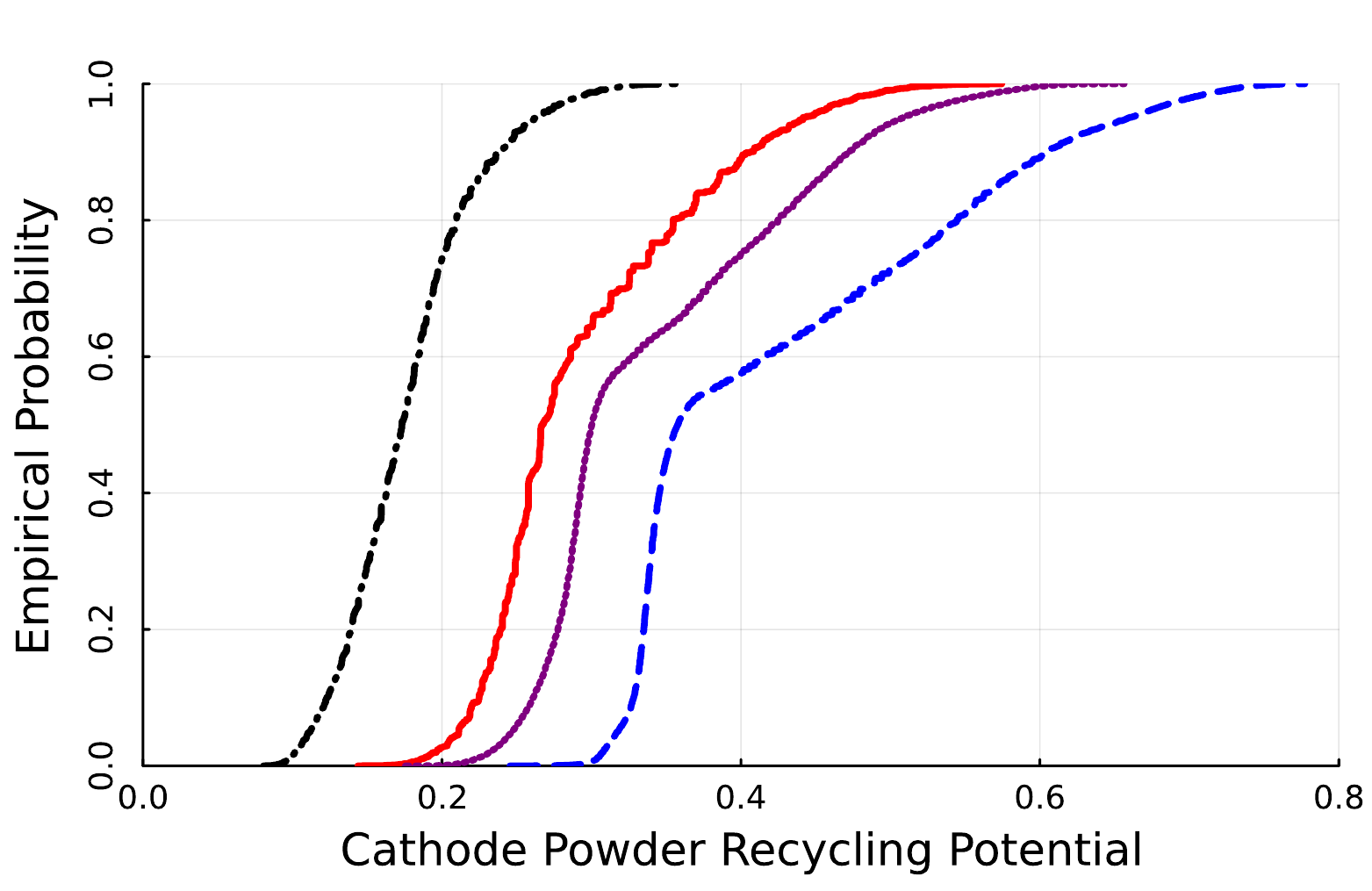}
    \captionsetup{font=footnotesize}
    \caption{Case 2 DR0}
    \end{subfigure}
    \caption{Empirical cumulative density function of the proportion of cathode powder used in new battery manufacturing that is produced via recycling (i.e., recycling potential of cathode powder) across operational scenarios, indexed by planning period.} \label{fig:recPropECDF}
\end{figure}

\subsubsection*{Closed-Loop Recycled Material Usage} We consider the proportion of cathode material used to manufacture new batteries that is sourced from recycled batteries, which is called the closed-loop \textit{recycling potential} for cathode material, defined by \cite{xu2020future}.  We compute the recycling potential for each scenario $\omega \in \Omega_S$ and planning period $l$ as 
\begin{equation}
    \label{eq:recyclingPotential}
    \frac{\sum_{z \in \mathcal{Z},\ k \in \mathcal{K}^{\text{CP}},\ t \in \mathcal{T}_l}\ x^{\text{INV,NB}}_{z,a_\omega(t),t,k}}{\sum_{z \in \mathcal{Z},\ k \in \mathcal{K}^{\text{CP}},\ t \in \mathcal{T}_l}\ \left ( x^{\text{INV,NB}}_{z,a_\omega(t),t,k} + x^{\text{NM,NB}}_{z,a_\omega(t),t,k} \right )}.
\end{equation} A recycling potential of 50\% means that, within the planning period, half of the cathode material used in new battery manufacturing comes from recycled sources.  

Figures~\ref{fig:recPropDensity}~and~\ref{fig:recPropECDF} visualize the recycling potential as a distribution over the operational scenarios.  Figure~\ref{fig:recPropDensity} shows the density of the recycling potential (i.e., a smoothed probability density function), and Figure~\ref{fig:recPropECDF} gives an empirical cumulative distribution function.  We observe that scenarios with higher prominence of LFP cathode powders have lower recycling potential than NCX scenarios.  Because LFP powders do not contain cobalt, they are inexpensive to manufacture from new materials.  This property reduces the cost effectiveness of recycling and gives the distributions a bimodal structure, where the first mode corresponds to LFP scenarios and the second to NCX scenarios.  

Furthermore, solutions that construct hydrometallurgical recycling capacity (i.e., Case 1 DR0 and both Case 2 variations) yield right-tailed distributions.  Recall that hydrometallurgical recycling recovers cathode precursors, while direct recycling recovers cathode powder.  Recycled precursors can be manufactured into any cathode powder chemistry, allowing these materials to be used in manufacturing even as battery demand shifts between chemistries.  Alternatively, cathode powder recovered from direct recycling can only be used to manufacture batteries of the same chemistry.  Especially in DR0 solutions, the prevalence of hydrometallurgical recycling allows flexibility in how recycled materials are used in new battery manufacturing and allows for high rates of recycled material usage in some scenarios.  This result emphasizes a hidden benefit of hydrometallurgical recycling over direct methods when considering variable cathode chemistry demand, even though minimum-cost solutions prefer direct recycling.

Due to the low manufacturing cost of LFP powder, in the DR1 variations, the mode corresponding to LFP scenarios does not shift towards higher recycling potential as additional capacity becomes available in later planning periods.  In DR0 variations, the flexibility of hydrometallurgical recycling allows lithium recovered from LFP batteries to be used to manufacture cathode powders of other, more expensive chemistries.  This increases the recycling potential of the first mode in these variations, even though the expensive chemistries are less prevalent in LFP scenarios.

In the DR1 variations, the maximum level of recycling potential across scenarios approaches $30\%$ in Case~1 and $60\%$ in Case~2, and the expected recycling potential is $12\%$ in Case~1 and $23\%$ in Case~2.  Due to the increased investment in hydrometallurgical capacity in DR0 variations, the maximum recycling potential increases to $70\%$ in Case~1 and $80\%$ in Case~2, and the expected potential increases to $27\%$ in Case~1 and and $32\%$ in Case~2.

\subsection{Policy Analysis: Capacity Grants or Production Credits?}

\begin{figure}[ht]
    \begin{subfigure}[t]{.48\textwidth} \centering
    \includegraphics[width=\linewidth]{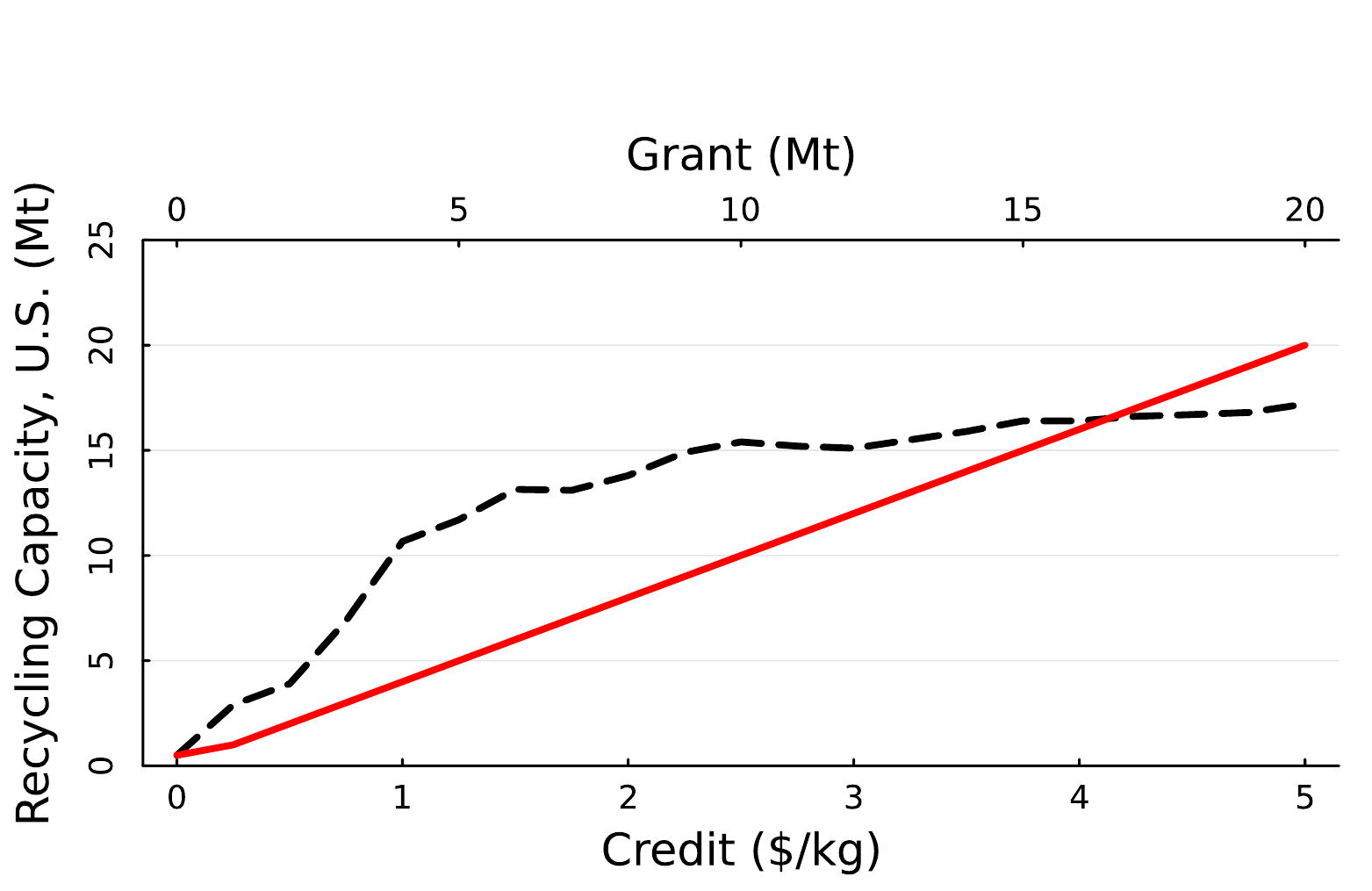}
    \captionsetup{font=footnotesize}
    \caption{U.S. recycling capacity in 2050}
    \label{fig:policyUSCap}
    \end{subfigure}
    \hspace{0.2cm}
    \begin{subfigure}[t]{.48\textwidth} \centering
    \includegraphics[width=\linewidth]{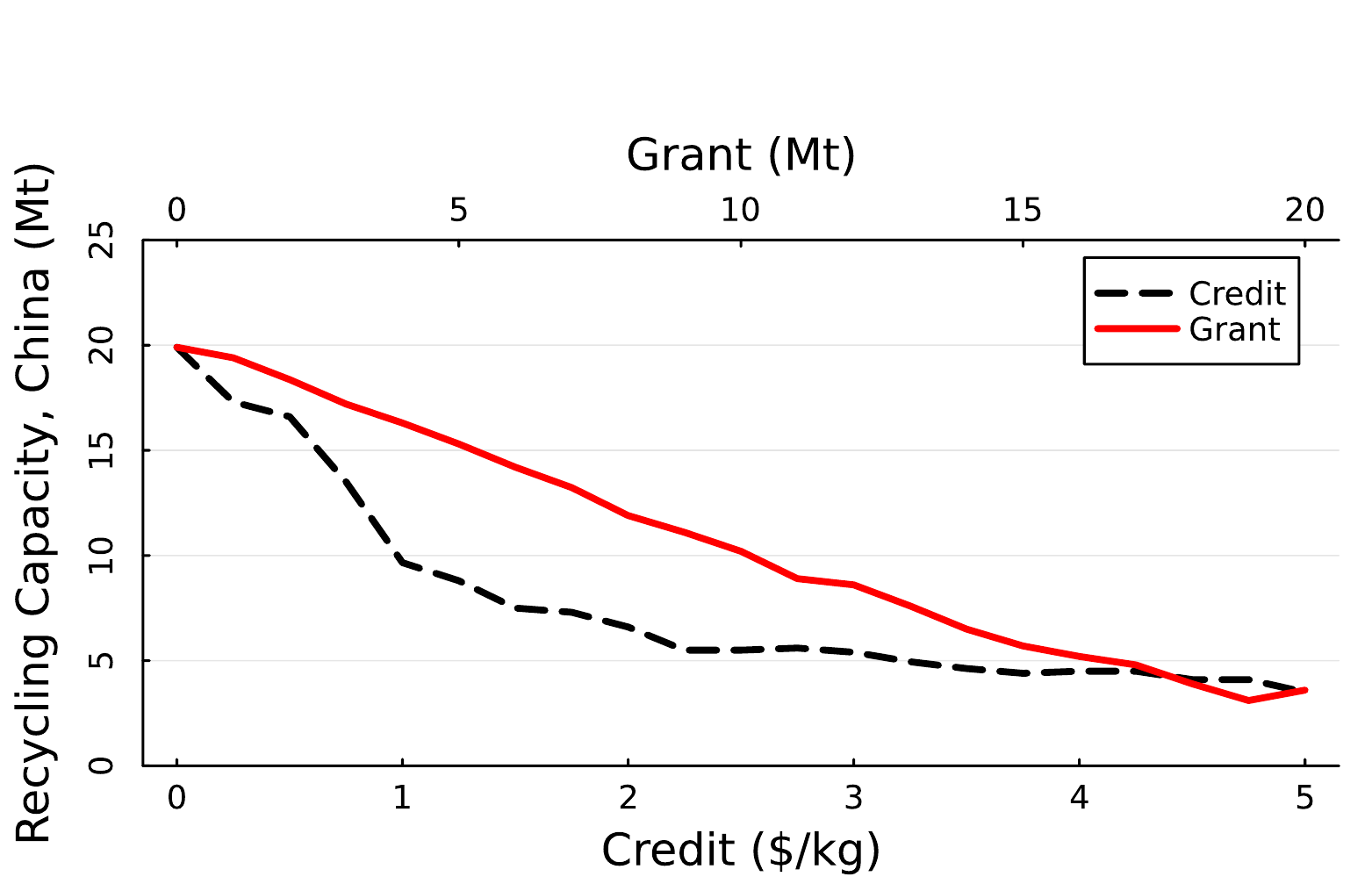}
    \captionsetup{font=footnotesize}
    \caption{China recycling capacity in 2050}
    \label{fig:policyCNCap}
    \end{subfigure}
    \newline
    \begin{subfigure}[t]{.48\textwidth} \centering
    \includegraphics[width=\linewidth]{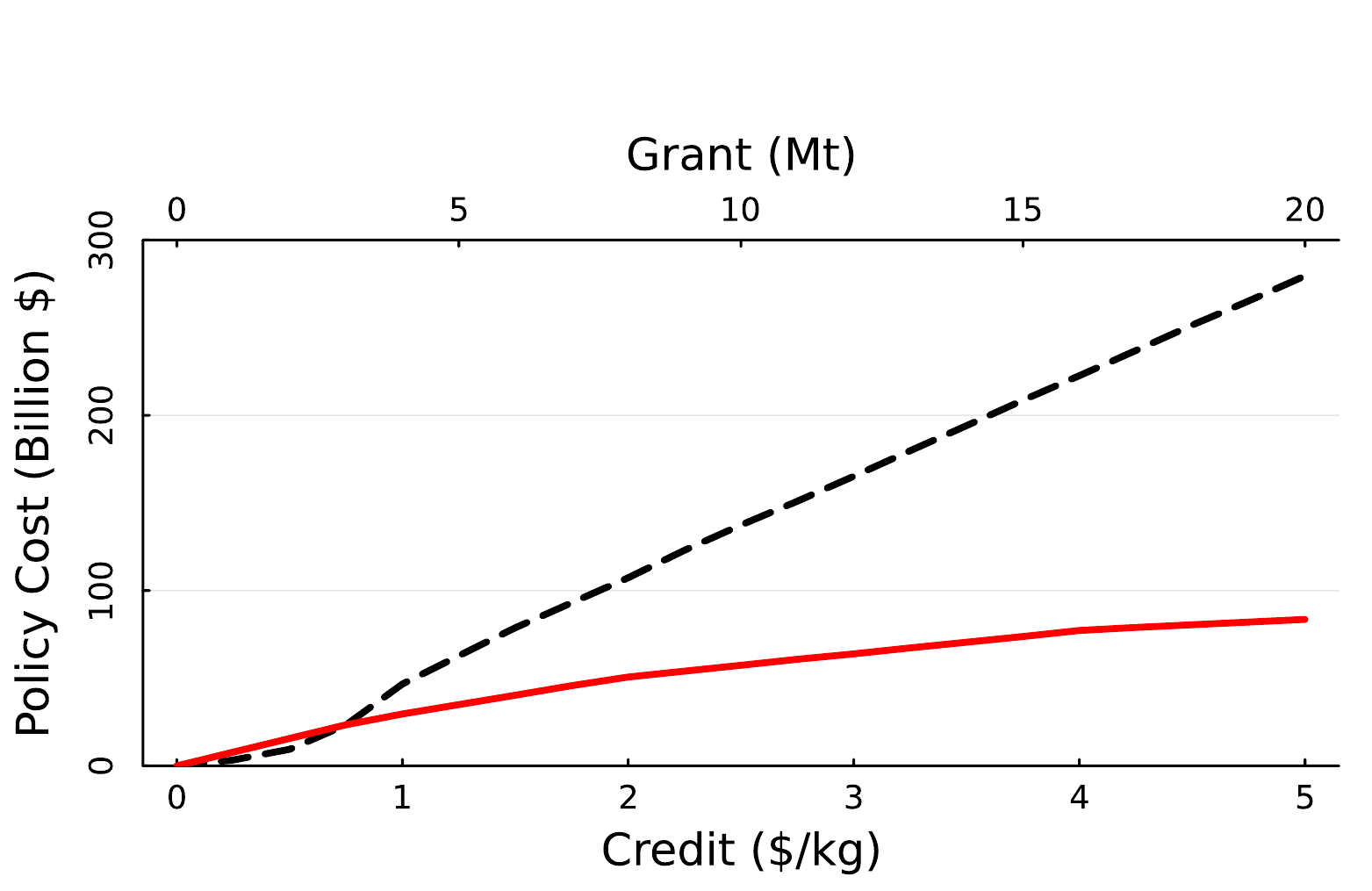}
    \captionsetup{font=footnotesize}
    \caption{Expected policy cost}
    \label{fig:policyCost}
    \end{subfigure}
    \hspace{0.2cm}
    \begin{subfigure}[t]{.48\textwidth} \centering
    \includegraphics[width=\linewidth]{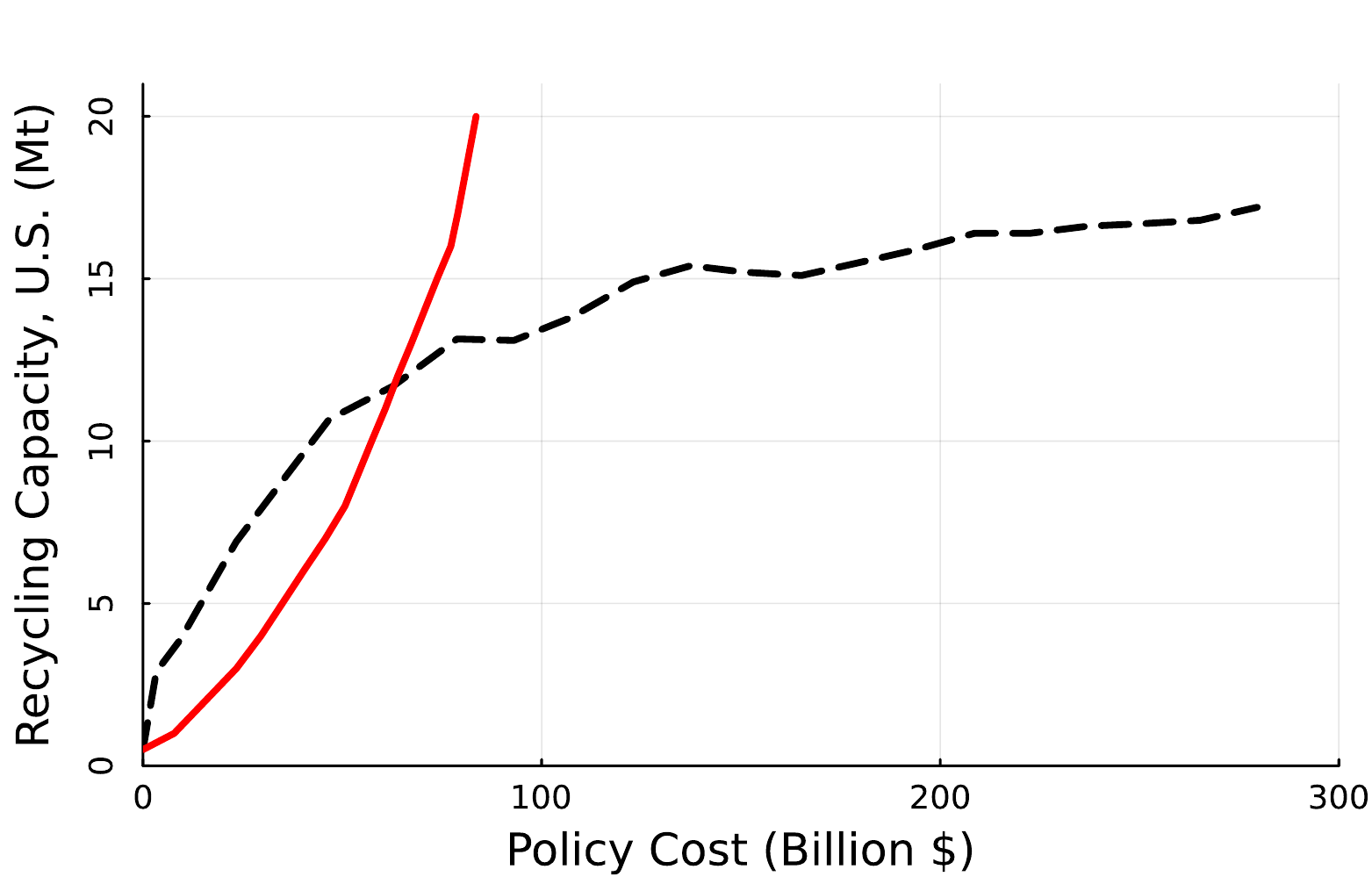}
    \captionsetup{font=footnotesize}
    \caption{Implicit relationship between expected policy cost and U.S. recycling capacity in 2050}
    \label{fig:policyImplicitCost}
    \end{subfigure}
    \caption{Impact of grant and credit policy level on installed recycling capacity in 2050 (a,b) and expected policy cost (c).  (d) shows implicitly how expenditure on each policy impacts installed U.S. recycling capacity in 2050.} \label{fig:policy}
\end{figure}

\begin{table}[ht]
    \centering
    \begin{tabular}{r||c||c|c|c||c|c|c}
        Capacity Grant (Mt) & --- & 5.0 & 10.0 & 20.0 & --- & --- & ---\\
        Recycling Credit (\$/kg) & --- & --- & --- & --- & 1 & 3 & 5\\
        \hline
        U.S. Recycling, 2050 (Mt) & 0.5 & 5.0 & 10.0 & 20.0 & 10.7 & 15.1 & 17.2\\
        China Recycling, 2050 (Mt) & 19.9 & 15.3 & 10.2 & 3.6 & 9.7 & 5.4 & 3.5\\
        U.S. Recycling Potential & 1.5\% & 11.6\% & 15.0\% & 20.0\% & 23.9\% & 24.3\% & 24.7\%\\
        China Recycling Potential & 26.6\% & 23.1\% & 21.3\% & 19.0\% & 14.3\% & 13.2\% & 12.3\%\\
        Aggregate Recycling Potential & 22.7\% & 22.8\% & 22.4\% & 22.0\% & 21.5\% & 21.9\% & 21.3\%\\
        Policy Cost (billion \$) & --- & 34.9 & 57.4 & 83.6 & 46.7 & 165.2 & 279.5
    \end{tabular}
    \caption{Impact of grants and credits on recycling metrics under the optimal solution in Case 2.  Metrics include installed recycling capacity in 2050, expected recycling potential \eqref{eq:recyclingPotential}, and the expected policy cost.
    }
    \label{tbl:policyAnalysis}
\end{table}

We consider the impact of two fundamental policy instruments on the supply chain: \emph{capacity grants} for the installation of recycling capacity and \emph{production credits} paid for recycling throughput.  We  provide insights on which instrument more effectively shapes U.S. domestic recycling capability.  

The U.S. Department of Energy has provided grants for recycling facilities, including $\$6$ billion of grants announced in 2022 \citep{doe2025grant, doe2025grant2}, and the Inflation Reduction Act of 2022 established credits for the domestic manufacture of critical minerals and battery components, which includes domestic recycling of EV batteries \citep{mehdi2023ira,irs2025credit}.  We study the implementation of these policies in the U.S. under Case~2, with zones for the U.S. and China.  Capacity grants are modeled as a lower bound $p^{\text{GRANT}}$ on installed recycling capacity in the U.S. by 2031, i.e., the start of the third planning period.  That is, we enforce in model \eqref{eq:reformulatedP} the following constraint:
\begin{equation}
    \label{eq:grantLB}
    \sum_{j \in \mathcal{J}} u^{\text{REC}} y^{\text{REC}}_{\text{U.S.},3,j} + y^{\text{REC}}_{\text{U.S.},3,j,+} \geq p^{\text{GRANT}}.
\end{equation}
Costs associated with the minimum capacity $p^{\text{GRANT}}$ are covered by the grant and are ignored by the recycler.  Additional capacity construction incurs additional cost.  Credits are modeled by adding a credit revenue term to the objective:
$$-p^{\text{CRED}} \sum_{t \in \mathcal{T}}  \sum_{\omega \in \Omega_{\sigma_t}}  \sum_{j \in \mathcal{J},\ i \in \mathcal{I}} (1 - \gamma)^{t-1} p_\omega x^{\text{RB,RM}}_{\omega,\text{U.S.},t,i,j},$$
where $p^{\text{CRED}}$ gives the recycling credit value per unit mass of throughput.  For reference, the expected undiscounted recycling cost (including planning and operational costs) per unit mass of throughput in Case~2 is $\$1.69$/kg.

We consider several metrics to measure the impact of the policies:
\begin{enumerate}[noitemsep, parsep=0pt, nolistsep]
    \item the installed capacity in each zone in 2050;
    \item the \textit{policy cost}: annual planning costs incurred by the grant capacity after 2031 and expected credit costs; and
    \item the expected recycling potential for cathode powder \eqref{eq:recyclingPotential}, in aggregate and for materials handled exclusively within a zone.
\end{enumerate}
Figure~\ref{fig:policy} shows how the installed capacity and policy cost vary with different levels of the grant $p^{\text{GRANT}}$ and credit $p^{\text{CRED}}$, and Figure~\ref{fig:policyImplicitCost} shows the implicit relationship between the policy cost at different levels of each policy and the amount of installed capacity in the U.S. in 2050.  That is, Figure~\ref{fig:policyImplicitCost} demonstrates how much domestic recycling capacity results from a given amount of expenditure on each policy.
For a few select levels of each policy, Table~\ref{tbl:policyAnalysis} compares the corresponding value for each metric against a baseline solution without intervention.  

Figures~\ref{fig:policyUSCap}~and~\ref{fig:policyCNCap} show that, in most cases, both grants and credits shift installed recycling capacity from China to the U.S. without drastically changing the total amount of constructed capacity.  However, for a large capacity grants (e.g., $p^{\text{GRANT}} = 20$ Mt), the total recycling capacity may increase sizably, by up to 15\%.  The capacity lower bound \eqref{eq:grantLB} for grants is typically binding, meaning that U.S. capacity grows linearly as a function of the grant capacity.  Alternatively, credits exhibit sublinear growth, so, for large credits, increasing the credit amount does not drastically increase the U.S. recycling capacity.  In Figure~\ref{fig:policyCost} we observe that the marginal policy cost of grant capacity is decreasing, suggesting that less expensive recycling capacity is installed to meet the capacity requirement from large grants.  The expected cost of credits grows superlinearly at low ($<$ \$1/kg) credit levels, then increases linearly. Figure~\ref{fig:policyImplicitCost} shows that, at the same expected cost, credits encourage greater investment in U.S. recycling capacity than grants when the expected policy cost is low ($<$ \$65 billion).  At higher expected policy costs, grants are more cost-effective for developing U.S. capacity.

Table~\ref{tbl:policyAnalysis} shows how operational recycling decisions respond to the policies.  We find that both policies encourage the use of domestically recycled materials for manufacturing within the U.S. by increasing the recycling potential in the U.S. and causing a corresponding decrease in China.  Credit-based policies encourage higher rates of recycling in the U.S. than grant-based policies.  However, credits slightly decrease the aggregate recycling potential compared to grants and the baseline, likely because they encourage recycling even at times when recycled materials cannot be used effectively in new battery manufacturing.

\section{Conclusion}
\label{sec:conclusion}

In this work, we propose a multistage stochastic optimization model for the design and operation of closed-loop EV battery recycling supply chains.  We transform EverBatt, an evaluative framework for battery recycling cost and environmental impact, into a detailed, high-fidelity decision-making model that determines optimal recycling and cathode production investment strategies.  We construct a procedure for generating multistage scenarios for new battery demand, retired battery supply, and material costs, leveraging state-of-the-art econometric models for critical mineral prices and EV demand.  To solve our model, we introduce an algorithm for separable concave minimization over polyhedra that improves performance over existing algorithms and a cut grouping strategy for Benders' decomposition that converges faster than standard approaches.  We find an equivalent formulation of our model which reduces solve times by introducing integrality constraints.  Analysis of the optimal solutions shows large improvements in the cost of manufacturing new batteries when recycling is incorporated into the supply chain, as well as reductions in the environmental impact of battery manufacturing.  We provide insights into the impact of recycling capacity grants and production credits on domestic investment in battery recycling, revealing that production credits encourage greater investment at lower policy costs, whereas grants become more cost-effective with higher policy expenditure.  Overall, this methodology provides a comprehensive and flexible quantitative framework for recyclers, battery manufacturers, and policy makers to plan for the integration of recycling technologies into the supply chain.

\theendnotes

\nocite{horst2000introduction}

\bibliographystyle{informs2014}
\bibliography{battopt.bib}

\newpage

\appendix

\section{Appendix}
This appendix is organized as follows.  Section~\ref{sec:ecModel} contains tables that summarize the data and variables used in our models.  Section~\ref{sec:ecCCParameter} describes the computation of the average battery mass parameter used in demand scenario generation.  Section~\ref{sec:ecZoneCost} describes how cost parameters vary across zones and conducts a sensitivity analysis on these parameters.  Section~\ref{sec:ecProofs} provides proofs of our results.

\subsection{Summary of Model Data}
\label{sec:ecModel}
Tables~\ref{tbl:modelSets}, \ref{tbl:modelParams}, and \ref{tbl:modelVars} give descriptions of the index sets, constant parameters, and variables used to define the models \eqref{eq:deterministicP} and \eqref{eq:reformulatedP}.
\begin{xltabular}{\linewidth}{l|X}
    \caption{Sets}\label{tbl:modelSets} \\
    \hline
    $\mathcal{I}$ & Set of battery chemistries\\
    $\mathcal{J}$ & Set of recycling processes\\
    $\mathcal{K}$ & Set of materials encountered during recycling and manufacturing\\
    $\mathcal{K}^{\text{CP}}$ & Set of cathode powders\\
    $\mathcal{L}$ & Set of planning periods in the model horizon\\
    $\mathcal{N}^{\text{CP}}_{l,k}$ & Set of cathode production line indices used to produce material $k \in \mathcal{K}^{\text{CP}}$ during planning period $l \in \mathcal{L}$\\
    $\mathcal{N}^{\text{REC}}_l$ & Set of recycling facility indices during planning period $l \in \mathcal{L}$\\
    $\mathcal{S}$ & Set of stages in the multistage scenario tree\\
    $\mathcal{T}$ & Set of time periods in the model horizon\\
    $\mathcal{T}_l$ & Set of time periods associated with planning period $l \in \mathcal{L}$\\
    $\mathcal{Z}$ & Set of zones in which production facilities can be located\\
    $\Omega_{\sigma}$ & Set of multistage scenario tree nodes at stage $\sigma \in \mathcal{S}$\\
    \hline
\end{xltabular}

\begin{xltabular}{\linewidth}{l|X}
    \caption{Parameters}\label{tbl:modelParams} \\
    \hline
    $a_\omega(t)$ & Ancestor node of node $\omega \in \cup_{\sigma \in \mathcal{S}} \Omega_{\sigma}$ at period $t \in \mathcal{T}$\\
    $c^{\text{CP}}_{\omega,z,t,k}$ & Variable cost of producing one unit of material $k \in \mathcal{K}^{\text{CP}}$ via cathode production in time period $t \in \mathcal{T}$, zone $z \in \mathcal{Z}$ and node $\omega \in \Omega_{\sigma_t}$\\
    $c^{\text{CP,NM}}_{\omega,t,k}$ & Cost per unit of material $k \in \mathcal{K} \setminus \mathcal{K}^{\text{CP}}$ purchased new to manufacture cathode powder in time period $t \in \mathcal{T}$ and node $\omega \in \Omega_{\sigma_t}$\\
    $c^{\text{MC}}_{\omega,z,t,k}$ & Variable cost for generating one unit of material $k \in \mathcal{K} \setminus \mathcal{K}^{\text{CP}}$ via material conversion in time period $t \in \mathcal{T}$, zone $z \in \mathcal{Z}$ and node $\omega \in \Omega_{\sigma_t}$\\
    $c^{\text{NB,NM}}_{\omega,t,k}$ & Cost per unit of material $k \in \mathcal{K}$ purchased new to manufacture batteries in time period $t \in \mathcal{T}$ and node $\omega \in \Omega_{\sigma_t}$\\
    $c^{\text{REC}}_{\omega,z,t,i,j}$ & Variable cost per unit mass of battery of chemistry $i \in \mathcal{I}$ recycled with process $j \in \mathcal{J}$ in time period $t \in \mathcal{T}$, zone $z \in \mathcal{Z}$ and node $\omega \in \Omega_{\sigma_t}$\\
    $c^{\text{TR,RB}}_{z,z'}$ & Cost per unit mass of  battery transported from zone $z \in \mathcal{Z}$ to zone $z' \in \mathcal{Z} \setminus \{z\}$\\
    $c^{\text{TR,RM}}_{z,z'}$ & Cost to transport a unit of recycled material from zone $z \in \mathcal{Z}$ to zone $z' \in \mathcal{Z} \setminus \{z\}$\\
    $C^{\text{OP}}_{\omega,t}$ & Operational costs as a function of operational decisions in time period $t \in \mathcal{T}$ and node $\omega \in \Omega_{\sigma_t}$\\
    $C^{\text{PL}}_t$ & Planning costs as a function of capacity decisions in time period $t \in \mathcal{T}$\\
    $\overline{C}^{\text{PL}}_t$ & Planning costs as a function of capacity decisions in time period $t \in \mathcal{T}$ for the reformulated capacity variable structure\\
    $d_{\omega,z,t,i}$ & Demand for batteries of chemistry $i \in \mathcal{I}$ in time period $t \in \mathcal{T}$, zone $z \in \mathcal{Z}$ and node $\omega \in \Omega_{\sigma_t}$\\
    $f^{\text{CP}}_{z,k}$ & Concave function mapping cathode production capacity for a line producing material $k \in \mathcal{K}^{\text{CP}}$ in zone $z \in \mathcal{Z}$ to annual labor and capital cost\\
    $f^{\text{REC}}_{z,j}$ & Concave function mapping recycling capacity for process $j \in \mathcal{J}$ in zone $z \in \mathcal{Z}$ to annual labor and capital cost\\
    $l_t$ & Planning period associated with time period $t \in \mathcal{T}$\\
    $p_\omega$ & Probability associated with node $\omega \in \cup_{\sigma \in \mathcal{S}} \Omega_\sigma$\\
    $s_{\omega,z,t,i}$ & Supply of batteries of chemistry $i \in \mathcal{I}$ in time period $t \in \mathcal{T}$, zone $z \in \mathcal{Z}$ and node $\omega \in \Omega_{\sigma_t}$\\
    $u^{\text{CP}}$ & Maximum capacity of a cathode production line\\
    $u^{\text{REC}}$ & Maximum capacity of a recycling process at each facility\\
    $v_{\omega,t,k}$ & Value per unit of material $k \in \mathcal{K}$ in time period $t \in \mathcal{T}$ and node $\omega \in \Omega_{\sigma_t}$\\
    $\Delta^{\text{CP}}_{k',k}$ & Amount of material $k \in \mathcal{K} \setminus \mathcal{K}^{\text{CP}}$ used to produce one unit of material $k' \in \mathcal{K}^{\text{CP}}$ via cathode production\\
    $\Delta^{\text{MC}}_{k',k}$ & Amount of material $k \in \mathcal{K} \setminus \mathcal{K}^{\text{CP}}$ used to produce one unit of material $k' \in \mathcal{K} \setminus \mathcal{K}^{\text{CP}}$ via material conversion\\
    $\Delta^{\text{NB}}_{i,k}$ & Amount of material $k \in \mathcal{K}$ used to produce a unit mass of battery of chemistry $i \in \mathcal{I}$ \\
    $\Delta^{\text{REC}}_{k,i,j}$ & Amount of material $k \in \mathcal{K}$ generated by recycling a unit mass of battery of chemistry $i \in \mathcal{I}$ with process $j \in \mathcal{J}$\\
    $\gamma$ & Annual discount factor\\
    $\eta$ & Proportion of material value recovered when selling recycled material to the market\\
    $\rho$ & Proportion of material value incurred as annual inventory cost\\
    $\sigma_t$ & Stage associated with time period $t \in \mathcal{T} \cup \{0\}$\\
    \hline
\end{xltabular}

\begin{xltabular}{\linewidth}{l|X}
    \caption{Variables}\label{tbl:modelVars} \\
    \hline
    $x^{\text{CP,INV}}_{\omega,z,t,k}$ & Amount of output material $k \in \mathcal{K}^{\text{CP}}$ from cathode production put in inventory in time period $t \in \mathcal{T}$, zone $z \in \mathcal{Z}$ and node $\omega \in \Omega_{\sigma_t}$\\
    $x^{\text{INV}}_{\omega,z,t,k}$ & Amount of recycled material $k \in \mathcal{K}$ in inventory in time period $t \in \mathcal{T}$, zone $z \in \mathcal{Z}$ and node $\omega \in \Omega_{\sigma_t}$\\
    $x^{\text{INV,MC}}_{\omega,z,t,k}$ & Amount of recycled material $k \in \mathcal{K} \setminus \mathcal{K}^{\text{CP}}$ used in material conversion in time period $t \in \mathcal{T}$, zone $z \in \mathcal{Z}$ and node $\omega \in \Omega_{\sigma_t}$\\
    $x^{\text{INV,NB}}_{\omega,z,t,k}$ & Amount of recycled material $k \in \mathcal{K}^{\text{CP}}$ used to produce new batteries in time period $t \in \mathcal{T}$, zone $z \in \mathcal{Z}$ and node $\omega \in \Omega_{\sigma_t}$\\
    $x^{\text{MC,CP}}_{\omega,z,t,k}$ & Amount of output material $k \in \mathcal{K} \setminus \mathcal{K}^{\text{CP}}$ from material conversion used for cathode production in time period $t \in \mathcal{T}$, zone $z \in \mathcal{Z}$ and node $\omega \in \Omega_{\sigma_t}$\\
    $x^{\text{NM,CP}}_{\omega,z,t,k}$ & Amount of new material $k \in \mathcal{K} \setminus \mathcal{K}^{\text{CP}}$ used in cathode production in time period $t \in \mathcal{T}$, zone $z \in \mathcal{Z}$ and node $\omega \in \Omega_{\sigma_t}$\\
    $x^{\text{NM,NB}}_{\omega,z,t,k}$ & Amount of new material $k \in \mathcal{K}$ used to produce new batteries in time period $t \in \mathcal{T}$, zone $z \in \mathcal{Z}$ and node $\omega \in \Omega_{\sigma_t}$\\
    $x^{\text{RB}}_{\omega,z,t,i}$ & Amount of retired battery of chemistry $i \in \mathcal{I}$ in inventory in time period $t \in \mathcal{T}$, zone $z \in \mathcal{Z}$ and node $\omega \in \Omega_{\sigma_t}$\\
    $x^{\text{RB,RM}}_{\omega,z,t,i,j}$ & Amount of retired battery of chemistry $i \in \mathcal{I}$ recycled with process $j \in \mathcal{J}$ in time period $t \in \mathcal{T}$, zone $z \in \mathcal{Z}$ and node $\omega \in \Omega_{\sigma_t}$\\
    $x^{\text{RM,INV}}_{\omega,z,t,k}$ & Amount of output material $k \in \mathcal{K}$ from battery recycling put in inventory in time period $t \in \mathcal{T}$, zone $z \in \mathcal{Z}$ and node $\omega \in \Omega_{\sigma_t}$\\
    $x^{\text{RM,S}}_{\omega,z,t,k}$ & Amount of output material $k \in \mathcal{K}$ from battery recycling sold to the market in time period $t \in \mathcal{T}$, zone $z \in \mathcal{Z}$ and node $\omega \in \Omega_{\sigma_t}$\\
    $x^{\text{TR,RB}}_{\omega,z,z',t,i}$ & Amount of retired battery with chemistry $i \in \mathcal{I}$ transported from zone $z \in \mathcal{Z}$ to zone $z' \in \mathcal{Z} \setminus \{z\}$ in time period $t \in \mathcal{T}$ and node $\omega \in \Omega_{\sigma_t}$\\
    $x^{\text{TR,RM}}_{\omega,z,z',t,k}$ & Amount of recycled material $k \in \mathcal{K}$ transported from zone $z \in \mathcal{Z}$ to zone $z' \in \mathcal{Z} \setminus \{z\}$ in time period $t \in \mathcal{T}$ and node $\omega \in \Omega_{\sigma_t}$\\
    $y^{\text{CP}}_{z,l,k,n}$ & Annual capacity of cathode production line $n \in \mathcal{N}^{\text{CP}}_{l,k}$ producing material $k \in \mathcal{K}^{\text{CP}}$ in zone $z \in \mathcal{Z}$ during planning period $l \in \mathcal{L}$ \\
    $y^{\text{REC}}_{z,l,j,n}$ & Annual capacity of recycling process $j \in \mathcal{J}$ at facility $n \in \mathcal{N}^{\text{REC}}_l$ in zone $z \in \mathcal{Z}$ during planning period $l \in \mathcal{L}$\\
    $y^{\text{CP}}_{z,l,k}$ & Number of cathode production lines producing material $k \in \mathcal{K}^{\text{CP}}$ in zone $z \in \mathcal{Z}$ during planning period $l \in \mathcal{L}$ with capacity at the upper bound $u^{\text{CP}}$\\
    $y^{\text{REC}}_{z,l,j}$ & Number of facilities in zone $z \in \mathcal{Z}$ during planning period $l \in \mathcal{L}$ with capacity for recycling process $j \in \mathcal{J}$ at the upper bound $u^{\text{REC}}$\\
    $y^{\text{CP}}_{z,l,k,+}$ & Annual capacity of the cathode production line producing material $k \in \mathcal{K}^{\text{CP}}$ in zone $z \in \mathcal{Z}$ during planning period $l \in \mathcal{L}$ that does not have  capacity at its upper or lower bound\\
    $y^{\text{REC}}_{z,l,j,+}$ & Annual capacity of the recycling facility for process $j \in \mathcal{J}$ in zone $z \in \mathcal{Z}$ during planning period $l \in \mathcal{L}$ that does not have capacity at its upper or lower bound\\
    \hline
\end{xltabular}

\subsection{Battery Mass Parameter Calculation}
\label{sec:ecCCParameter}

The parameter $m_{\xi,t,i}$ gives the average mass of a new battery by year, demand scenario, and chemistry, accounting for differences in battery size between BEVs and PHEVs.  Let $BEV^{\text{SDS}}_t$ and $BEV^{\text{STEPS}}_t$ represent the proportion of EV stock composed of BEVs under the SDS and STEPS projections, respectively, using data from \citeappendix{xu2020futureec}.  Note that $1 - BEV^{\text{SDS}}_t$ gives the proportion composed of PHEVs.
Similarly to the scenario-adjusted cost distribution parameters $(v^{\text{MED}}_{\xi,t,m},v^{\text{LB}}_{\xi,t,m},v^{\text{UB}}_{\xi,t,m})$, we compute samples for $BEV_t$ from the scenario multipliers $\lambda_{\xi,t}$:
$$BEV_{\xi,t} = BEV_t^{\text{SDS}} \lambda_{\xi,t} + BEV_t^{\text{STEPS}} (1 - \lambda_{\xi,t}).$$  We then convert the proportion of battery stock $BEV_{\xi,t}$ to a proportion of new battery sales $\overline{BEV}_{\xi,t}$ recursively by
$$\overline{BEV}_{\xi,t} = \frac{BEV_{\xi,t} EV_{\xi,t} - BEV_{\xi,t-1} EV_{\xi,t-1} + \sum_{a=1}^A \overline{BEV}_{\xi,t-a} LS_a AB_{\xi,t,a}}{AB_{\xi,t,0}}.$$
Note that historical BEV proportions are available through year $-A$.  Let $m^{\text{BEV}}_i$ and $m^{\text{PHEV}}_i$ be the average mass of BEV and PHEV batteries for each chemistry \citepappendix{xu2020futureec}.  The average mass of a new battery is then computed
$$m_{\xi,t,i} = m^{\text{BEV}}_i \overline{BEV}_{\xi,t} + m^{\text{PHEV}}_i (1 - \overline{BEV}_{\xi,t}).$$

\subsection{Sensitivity to Zonal Cost Parameters}
\label{sec:ecZoneCost}

\begin{table}[htp]
    \centering
    \begin{tabular}{r|c|c}
        & U.S. & China\\
        \hline
        Equipment Cost Multiplier (\%) & 100 & 60\\
        Labor (\$/hr) & 20.00 & 3.00\\
        Electricity (\$/MWh) & 68.00 & 88.00\\
        Natural Gas (\$/MMBtu) & 3.84 & 12.00\\
        Water (\$/1000 gal) & 5.00 & 2.00\\
        Landfill (\$/ton) & 55.36 & 10.00\\
        Wastewater (\$/1000 gal) & 7.00 & 3.00
    \end{tabular}
    \caption{Baseline values for location-dependent cost parameters by zone.
    }
    \label{tbl:defaultLocationValues}
\end{table}

Cost parameters vary across zone due to differences in seven location-dependent costs: labor, electricity, natural gas, water, landfill, wastewater, and an equipment cost multiplier.  All planning and operational cost objects that are indexed by zone are functions of these values.  Table~\ref{tbl:defaultLocationValues} gives the baseline values for these parameters from the EverBatt model \citepappendix{dai2019everbattec}.

To investigate the robustness of our model and reported solutions against these parameters, we perform a sensitivity analysis on Case~2, variation DR1.  We perturb the baseline value for each parameter in the U.S. and China individually by $\pm10\%$, $\pm20\%$, and $\pm40\%$, and report the relative change in objective value and in installed capacity for recycling and cathode production facilities.  For baseline solution $\overline{y}$ and solution $y$ from the perturbed model, the relative change in recycling capacity for zone $z$ is computed by 
$$100 \times \frac{\sum_{j \in \mathcal{J},\, l \in \mathcal{L}} u^{\text{REC}} (\overline{y}^{\text{REC}}_{z,l,j} - y^{\text{REC}}_{z,l,j}) + \overline{y}^{\text{REC}}_{z,l,j,+} - y^{\text{REC}}_{z,l,j,+} }{\sum_{j \in \mathcal{J},\, l \in \mathcal{L}, z' \in \mathcal{Z}} u^{\text{REC}} \overline{y}^{\text{REC}}_{z',l,j} + \overline{y}^{\text{REC}}_{z',l,j,+}}.$$
Relative changes in cathode production capacity are computed analogously.

Table~\ref{tbl:sensitivityObjective} reports the relative change in objective value for parameters that yield changes over $0.05\%$ for some tested perturbation.  The optimal objective is very stable against perturbations to the location-dependent cost parameters.  In fact, perturbations to only two of the 14 parameters cause objective changes above $0.05\%$, namely, the labor cost and equipment cost multiplier in China.  The optimal objective is most sensitive to these parameters, although the relative change never exceeds $\pm1\%$.

\begin{table}[p]
    \centering
    \begin{tabular}{c|r|c|c|c|c|c|c}
        & & \multicolumn{6}{c}{Perturbation}\\
        Zone & Parameter & -40\% & -20\% & -10\% & +10\% & +20\% & +40\%\\
        \hline
        \multirow{2}{*}{China}
        & Equipment & -0.7\% & -0.3\% & -0.1\% & 0.1\% & 0.2\% & 0.4\%\\
        & Labor & -0.1\% & 0.0\% & 0.0\% & 0.0\% & 0.0\% & 0.1\%\\
    \end{tabular}
    \caption{Relative change in optimal objective value under select location-dependent cost parameter perturbations.   All other perturbations yield relative changes under $0.05\%$.
    }
    \label{tbl:sensitivityObjective}
\end{table}

Tables~\ref{tbl:sensitivityRecycling}~and~\ref{tbl:sensitivityCapacity} give the relative change in recycling and cathode production capacity, respectively, under each parameter perturbation.  Some differences in the reported optimal solutions may be attributed to the use of a $0.01\%$ relative optimality tolerance in the objective value, which potentially enforces a weaker tolerance in the solution space.  In general, the solutions show a decrease in U.S. capacity and an increase in China capacity when cost parameters increase in the U.S., and the opposite trends when cost parameters increase in China.  The recycling solution is more stable against perturbations than the cathode production solution.  Relative changes in recycling capacity generally do not exceed $10\%$, even when the cost parameters are perturbed by $\pm 40\%$.  Cathode production capacity never changes more than $0.5\%$ in the U.S., meaning that under all parameter perturbations, almost no U.S. cathode production capacity is constructed.  On the other hand, cathode production capacity in China exhibits higher sensitivity, varying by up to $67\%$ from the baseline solution.  This result suggests that, as the objective value remains stable, there are many near-optimal cathode production solutions that yield similar objective values.  Both recycling and cathode production solutions are most sensitive to labor cost and the equipment cost multiplier.  Perturbations to the other cost parameters never cause large changes in the capacity solutions.  Overall, these results show that our recycling capacity solution is resilient to changes in location-dependent cost parameters, and the cathode production capacity solution is resilient to changes in all location-dependent cost parameters except the equipment cost multiplier in China.

\begin{table}[p]
    \centering
    \begin{subtable}[t]{\textwidth}
        \begin{tabular}{c|r|cc|cc|cc|cc|cc|cc}
        & & \multicolumn{12}{c}{Perturbation}\\
        & & \multicolumn{2}{c|}{-40\%} & \multicolumn{2}{c|}{-20\%} & \multicolumn{2}{c|}{-10\%} & \multicolumn{2}{c|}{+10\%} & \multicolumn{2}{c|}{+20\%} & \multicolumn{2}{c}{+40\%}\\
        \hline
        Zone & Parameter & U.S. & C.N. & U.S. & C.N. & U.S. & C.N. & U.S. & C.N. & U.S. & C.N. & U.S. & C.N.\\
        \hline
        \multirow{7}{*}{U.S.}
        & Equipment & 9.7 & -8.6 & 5.5 & -4.4 & 3.8 & -2.7 & -2.4 & 3.2 & -2.4 & 3.3 & -2.4 & 3.7\\
        & Labor & 10.0 & -9.8 & 3.5 & -2.5 & 3.4 & -2.7 & -2.3 & 3.4 & -2.4 & 3.6 & -2.4 & 3.7\\
        & Electricity & 3.9 & -2.3 & 5.1 & -4.6 & 5.4 & -2.2 & 0.2 & -0.6 & -2.4 & 2.7 & -2.4 & 3.0\\
        & Natural Gas & 0.5 & 0.7 & 1.3 & 0.5 & 0.0 & 1.3 & 1.0 & 0.3 & 0.5 & 0.6 & 1.1 & 0.0\\
        & Water & -1.3 & 1.0 & 0.0 & 2.6 & 0.3 & 0.2 & 0.8 & -0.1 & 1.3 & -0.4 & -0.8 & 1.8\\
        & Landfill & 0.2 & 1.4 & 2.3 & -1.1 & 0.5 & 1.2 & 3.2 & -2.4 & 1.6 & -0.1 & -1.3 & 2.2\\
        & Wastewater & 2.4 & -0.8 & 1.9 & -0.7 & 2.4 & -2.7 & -0.8 & 1.7 & -0.1 & 1.5 & 0.3 & 1.0\\
        \hline
        \multirow{7}{*}{China}
        & Equipment & -2.4 & 6.4 & -2.4 & 6.9 & -1.3 & 3.4 & 4.9 & -4.9 & 5.6 & -7.5 & 8.9 & -11.2\\
        & Labor & -2.4 & 3.1 & 2.1 & -1.2 & 0.9 & 1.3 & 3.0 & -2.9 & 0.3 & 0.8 & 1.3 & -2.7\\
        & Electricity & -2.4 & 3.4 & -2.1 & 3.6 & -0.3 & 1.7 & 2.1 & -2.8 & 4.8 & -4.5 & 5.4 & -4.6\\
        & Natural Gas & 0.3 & 1.4 & 0.3 & 0.5 & 0.5 & 0.7 & 2.5 & -1.8 & 4.6 & -3.4 & -1.1 & 2.2\\
        & Water & 1.5 & -0.3 & 1.2 & -1.3 & 0.5 & 0.8 & 0.8 & -0.8 & 3.0 & -2.1 & 0.8 & 1.3\\
        & Landfill & 0.3 & 2.4 & -1.3 & 2.6 & 0.0 & 2.0 & 2.6 & -2.1 & 0.5 & -0.7 & 1.6 & -1.2\\
        & Wastewater & 0.0 & -0.6 & 0.8 & -1.5 & 1.1 & 0.0 & 2.7 & -1.4 & 0.3 & 1.4 & 1.3 & 0.0\\
    \end{tabular}
    \captionsetup{font=footnotesize}
    \caption{Relative change (\%) to recycling capacity under cost parameter perturbations.
    }
    \label{tbl:sensitivityRecycling}
    \end{subtable}
    \newline
    \vspace{0.6em}
    \begin{subtable}[t]{\textwidth}
    \begin{tabular}{c|r|cc|cc|cc|cc|cc|cc}
        & & \multicolumn{12}{c}{Perturbation}\\
        & & \multicolumn{2}{c|}{-40\%} & \multicolumn{2}{c|}{-20\%} & \multicolumn{2}{c|}{-10\%} & \multicolumn{2}{c|}{+10\%} & \multicolumn{2}{c|}{+20\%} & \multicolumn{2}{c}{+40\%}\\
        \hline
        Zone & Parameter & U.S. & C.N. & U.S. & C.N. & U.S. & C.N. & U.S. & C.N. & U.S. & C.N. & U.S. & C.N.\\
        \hline
        \multirow{7}{*}{U.S.}
        & Equipment & 0.3 & 0.4 & 0.0 & -3.8 & 0.0 & -1.4 & 0.0 & -3.8 & 0.0 & -2.1 & 0.0 & -5.4\\
        & Labor & 0.0 & -2.4 & 0.0 & -3.4 & 0.0 & -3.7 & 0.0 & -3.5 & 0.0 & -4.6 & 0.0 & 5.8\\
        & Electricity & 0.0 & -4.5 & 0.0 & -1.1 & 0.3 & -3.7 & 0.0 & -0.7 & 0.0 & 3.4 & 0.0 & 1.4\\
        & Natural Gas & 0.0 & -4.7 & 0.0 & -1.1 & 0.0 & -6.8 & 0.0 & -3.0 & 0.0 & -2.1 & 0.3 & -4.8\\
        & Water & 0.0 & -0.3 & 0.0 & 1.4 & 0.0 & 0.4 & 0.0 & 0.8 & 0.0 & -4.9 & 0.0 & 3.8\\
        & Landfill & 0.0 & 1.6 & 0.0 & -4.1 & 0.1 & -5.0 & 0.0 & -2.3 & 0.0 & -1.2 & 0.0 & -0.2\\
        & Wastewater & 0.1 & -1.4 & 0.0 & -2.4 & 0.0 & 0.9 & 0.0 & 2.7 & 0.0 & 6.2 & 0.0 & 0.0\\
        \hline
        \multirow{7}{*}{China}
        & Equipment & 0.0 & 66.1 & 0.0 & 30.1 & 0.0 & 13.0 & 0.0 & -19.2 & 0.0 & -27.6 & 0.0 & -67.1\\
        & Labor & 0.0 & 9.1 & 0.0 & 3.1 & 0.3 & 0.8 & 0.0 & -0.6 & 0.0 & 2.7 & 0.0 & -10.5\\
        & Electricity & 0.0 & -1.2 & 0.0 & 0.1 & 0.0 & -3.0 & 0.0 & -1.7 & 0.0 & -1.2 & 0.0 & -1.4\\
        & Natural Gas & 0.0 & -1.0 & 0.0 & 1.7 & 0.3 & 2.8 & 0.0 & -0.7 & 0.0 & -4.0 & 0.0 & -6.0\\
        & Water & 0.0 & 0.2 & 0.0 & -0.3 & 0.0 & -1.7 & 0.0 & 2.1 & 0.0 & -4.1 & 0.0 & -1.4\\
        & Landfill & 0.0 & 0.7 & 0.3 & 2.8 & 0.0 & -2.3 & 0.0 & -0.7 & 0.0 & -4.0 & 0.0 & 0.7\\
        & Wastewater & 0.0 & 1.3 & 0.0 & -3.8 & 0.0 & -4.9 & 0.4 & -0.5 & 0.0 & -5.6 & 0.0 & 2.8\\
    \end{tabular}
    \captionsetup{font=footnotesize}
    \caption{Relative change (\%) to cathode production capacity under cost parameter perturbations.
    }
    \label{tbl:sensitivityCapacity}
    \end{subtable}
    \vspace{0.4em}
    \caption{Sensitivity of optimal facility capacity in the U.S. and China (C.N.) to perturbations in zonal cost parameters.  Each row gives results for perturbations to one parameter in one zone.}
\end{table}

\subsection{Proofs of Results}
\label{sec:ecProofs}

\setcounter{repeatproposition}{0}
\begin{repeatproposition}
    For any $\overline{y} \geq 0$ that satisfies \eqref{eq:constrCapRECIncrease}-\eqref{eq:constrCapCPUB}, $\{(y,x) \geq 0 : \eqref{eq:constrProd},\ \eqref{eq:constrMatFlow},\ \eqref{eq:constrCap},\ y = \overline{y}\} \neq \emptyset$.
\end{repeatproposition}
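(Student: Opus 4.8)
The plan is to prove this relatively complete recourse property by exhibiting an explicit feasible operational solution for the given capacity $\overline{y}$, namely the ``no-recycling'' plan in which all battery demand is satisfied using purchased new materials. The central observation is that constraints \eqref{eq:constrNBProdCP}--\eqref{eq:constrNBProdNoCP} can always be met by setting the new-material purchase $x^{\text{NM,NB}}$ equal to the full material requirement $\sum_{i} \Delta^{\text{NB}}_{i,k} d_{\omega,z,t,i}$ and using no recycled input. Because demand never relies on recycling output, every recycling, conversion, and cathode-production activity may be set to zero, which sidesteps all interaction with the capacity bounds.

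Concretely, I would set $x^{\text{RB,RM}} = 0$, $x^{\text{INV,NB}} = 0$, and all transport variables $x^{\text{TR,RB}}, x^{\text{TR,RM}}$ to zero, and take $x^{\text{NM,NB}}_{\omega,z,t,k} = \sum_{i} \Delta^{\text{NB}}_{i,k} d_{\omega,z,t,i}$. This choice satisfies \eqref{eq:constrNBProdCP} and \eqref{eq:constrNBProdNoCP} and is nonnegative by Assumption~\ref{assump:dataProperties}. Propagating the remaining production balances then forces $x^{\text{NM,CP}} = x^{\text{MC,CP}} = x^{\text{CP,INV}} = 0$ from \eqref{eq:constrCPProd}, $x^{\text{INV,MC}} = 0$ from \eqref{eq:constrMCProd}, and $x^{\text{RM,INV}} = x^{\text{RM,S}} = 0$ from \eqref{eq:constrRECProd}. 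For the inventory balances, I set every material inventory $x^{\text{INV}} \equiv 0$; constraints \eqref{eq:constrINVMatFlowNoCP}--\eqref{eq:constrINVMatFlowCP} then reduce to $x^{\text{INV}}_{\omega,z,t,k} = x^{\text{INV}}_{\omega',z,t-1,k}$, which holds by induction on $t$ from the zero initial condition \eqref{eq:constrINVMatFlowInit}. The retired-battery inventory is defined recursively along each ancestor path by $x^{\text{RB}}_{\omega,z,t,i} = x^{\text{RB}}_{a_\omega(t-1),z,t-1,i} + s_{\omega,z,t,i}$, which is nonnegative since $s \geq 0$ and the initial state is zero by \eqref{eq:constrRBMatFlowInit}.

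It then remains to verify the capacity constraints \eqref{eq:constrCap}: constraints \eqref{eq:constrCapREC} and \eqref{eq:constrCapCP} hold because their right-hand sides $\sum_i x^{\text{RB,RM}}$ and $x^{\text{CP,INV}}$ are zero while $\overline{y} \geq 0$, and the remaining capacity constraints \eqref{eq:constrCapRECIncrease}--\eqref{eq:constrCapCPUB} involve only $y = \overline{y}$ and hold by hypothesis. I do not expect a genuine obstacle here, as the argument is a direct construction; the only point requiring mild care is confirming that the inventory recursions are well defined across the stage boundaries of the multistage tree, which follows because the ancestor map $a_\omega(\cdot)$ supplies a unique predecessor at each period and the material inventories remain identically zero.
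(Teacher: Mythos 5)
Your proposal is correct and matches the paper's own proof essentially step for step: both exhibit the no-recycling solution that meets all demand with purchased new materials, sets every recycling, conversion, cathode-production, transport, and material-inventory variable to zero, accumulates retired batteries in inventory via the supply recursion (your recursive definition unrolls to the paper's $x^{\text{RB}}_{\omega,z,t,i}=\sum_{t'=1}^{t} s_{a_\omega(t'),z,t',i}$), and observes that the capacity constraints are trivially satisfied since their right-hand sides vanish. No gaps.
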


\begin{proof}
    Fix $y \geq 0$ so that \eqref{eq:constrCapRECIncrease}-\eqref{eq:constrCapCPUB} are satisfied.  We construct a corresponding feasible $x$ that only buys new materials to manufacture batteries and does not recycle.  Take $x^{\text{NM,NB}}_{\omega,z,t,k} = \sum_{i \in \mathcal{I}} \Delta^{\text{NB}}_{i,k} d_{\omega,z,t,i}$.  Further, set $x^{\text{RB}}_{\omega,z,t,i} = \sum_{t' = 1}^t s_{a_\omega(t'),z,t',i}$ and $x^{\text{RB}}_{\omega,z,0,i} = 0$.  Note that $a_\omega(t) = \omega$ for $\omega \in \Omega_{\sigma_t}$.  Set all other components of $x$ to $0$.  Because battery supply $s$, demand $d$, and material requirements $\Delta$ are nonnegative under Assumption~\ref{assump:dataProperties}, $x \geq 0$.

    The construction of $x^{\text{NM,NB}}$ ensures that \eqref{eq:constrNBProdCP} and \eqref{eq:constrNBProdNoCP} are satisfied, and that of $x^{\text{RB}}$ ensures that \eqref{eq:constrRBMatFlow} is satisfied.  The remainder of constraints in \eqref{eq:constrProd} and \eqref{eq:constrMatFlow} only involve components of $x$ that are $0$ and are thus satisfied.  As $y \geq 0$, the solution $x$ is feasible for \eqref{eq:constrCapREC} and \eqref{eq:constrCapCP}.  Thus, given $y$, we can construct $x \geq 0$ so that constraints \eqref{eq:constrProd}-\eqref{eq:constrCap} are satisfied.

\end{proof}

\begin{lemma}[\citeappendix{horst2000introductionec}, Theorem 1.19]
\label{lemma:horstExtPt}
    Let $\mathcal{X}$ be a bounded polytope and $f\,:\,\mathcal{X} \rightarrow \mathbb{R}$ be a concave function.  Then, $f$ attains its minimum at an extreme point of $\mathcal{X}$.
\end{lemma}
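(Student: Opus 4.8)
The plan is to reduce the minimization of $f$ over the entire polytope $\mathcal{X}$ to a minimization over its finitely many extreme points, combining the representation theorem for polytopes with the defining inequality of concavity. First I would invoke the fact that a nonempty bounded polytope is the convex hull of its finite, nonempty set of extreme points, say $\mathcal{X} = \conv\{v_1,\dots,v_m\}$. Boundedness is exactly what is needed here: it rules out extreme rays and guarantees that the extreme point set is finite, so that $\min_{j} f(v_j)$ is a well-defined quantity attained at some particular extreme point.

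Next I would take an arbitrary $x \in \mathcal{X}$ and express it as a convex combination $x = \sum_{i=1}^m \lambda_i v_i$ with $\lambda_i \geq 0$ and $\sum_{i=1}^m \lambda_i = 1$. Applying the finite (Jensen) form of the concavity inequality to $f$ yields
\[
f(x) = f\Bigl(\sum_{i=1}^m \lambda_i v_i\Bigr) \;\geq\; \sum_{i=1}^m \lambda_i f(v_i) \;\geq\; \sum_{i=1}^m \lambda_i \,\min_{j} f(v_j) \;=\; \min_{j} f(v_j).
\]
Since $x$ was arbitrary, $\min_j f(v_j)$ is a lower bound for $f$ over all of $\mathcal{X}$. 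Because this lower bound is itself a value of $f$, namely $f(v_{j^*})$ for the minimizing index $j^*$, it is in fact the minimum of $f$ on $\mathcal{X}$, and it is attained at the extreme point $v_{j^*}$. This completes the argument.

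I do not expect a genuine obstacle in this proof; the entire content is the polytope representation theorem plus one application of concavity. The one point that superficially looks delicate is attainment of the minimum: $f$ is only assumed concave and could in principle be discontinuous on the relative boundary of $\mathcal{X}$, so one might worry whether $\inf_{\mathcal{X}} f$ is achieved at all. The argument above sidesteps this entirely, since it bounds $f(x)$ below by a minimum over a \emph{finite} set and therefore never appeals to continuity or semicontinuity of $f$; attainment follows automatically from finiteness of the extreme point set. Consequently the result holds for any finite-valued concave $f$ on a bounded polytope, which is precisely the generality the downstream extreme point arguments require.
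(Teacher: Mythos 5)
Your proof is correct and follows essentially the same route as the paper's: write an arbitrary point of the bounded polytope as a convex combination of its finitely many extreme points, apply the finite concavity (Jensen) inequality, and bound $f$ below by the minimum of its values on the extreme points, which is attained by finiteness. Your additional remark that no continuity assumption is needed is accurate and consistent with how the paper later applies the lemma.
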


\begin{proof}
    Let $\{x^{(j)}\}_{j=1}^J$ be the extreme points of $\mathcal{X}$.  As $\mathcal{X}$ is bounded, $\mathcal{X} = \conv \{x^{(j)}\}_{j=1}^J$.  Then, for any $x \in \mathcal{X}$, there is some $\lambda \geq 0$ with $\sum_{j=1}^J \lambda_j = 1$ and $\sum_{j=1}^J \lambda_j x^{(j)} = x$.  By concavity of $f$,
    \begin{equation*}
        \begin{aligned}
            f(x) & = f \left (\sum_{j=1}^J \lambda_j x^{(j)} \right ) \geq  \sum_{j=1}^J \lambda_j f(x^{(j)}) \geq \min_{j \in \lBrack J \rBrack} f(x^{(j)}) \left ( \sum_{j=1}^J \lambda_j \right ) = \min_{j \in \lBrack J \rBrack} f(x^{(j)}).
        \end{aligned}
    \end{equation*}
    Therefore, $f$ achieves its minimum on $\mathcal{X}$ at some element of $\{x^{(j)}\}_{j=1}^J$, which is an extreme point of $\mathcal{X}$.

\end{proof}

\begin{lemma}
    \label{lemma:concMinFinite}
    Let $f$ be a concave function and $\mathcal{X} \subseteq \mathbb{R}^p_+$ be a nonempty polyhedron.  Then, if $\inf_{x \in \mathcal{X}} f(x) > - \infty$, it attains a finite optimal solution at an extreme point of $\mathcal{X}$.
\end{lemma}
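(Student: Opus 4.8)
The plan is to reduce the unbounded case to the bounded-polytope case of Lemma~\ref{lemma:horstExtPt} by invoking the Minkowski--Weyl decomposition of $\mathcal{X}$ and showing that no recession direction can decrease $f$. First I would establish that $\mathcal{X}$ possesses at least one extreme point: since $\mathcal{X} \subseteq \R^p_+$, it contains no line and is therefore pointed, so by the decomposition (resolution) theorem it admits a representation
\[
\mathcal{X} = \conv\{v^{(j)}\}_{j=1}^J + \cone\{r^{(k)}\}_{k=1}^R,
\]
where $\{v^{(j)}\}_{j=1}^J$ is the finite, nonempty set of extreme points and $\{r^{(k)}\}_{k=1}^R$ generates the recession cone of $\mathcal{X}$. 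Every $x \in \mathcal{X}$ can then be written as $x = \bar{x} + \sum_{k} \mu_k r^{(k)}$ with $\mu_k \geq 0$ and $\bar{x} = \sum_j \lambda_j v^{(j)}$ a convex combination of extreme points (so $\bar x \in \mathcal{X}$).

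The key step, and the main departure from Lemma~\ref{lemma:horstExtPt}, is to show that $f$ is non-decreasing along every recession direction. Fix $y \in \mathcal{X}$ and a recession direction $r$, and consider $g(t) := f(y + t r)$ for $t \geq 0$. Since $y + t r \in \mathcal{X}$ for all $t \geq 0$, the function $g$ is concave on $[0,\infty)$ and bounded below by $\inf_{x \in \mathcal{X}} f(x) > -\infty$. A concave univariate function bounded below on $[0,\infty)$ must be non-decreasing: its difference quotients are non-increasing, so if the right-slope were ever negative the function would tend to $-\infty$, contradicting the lower bound. Hence $f(y + t r) \geq f(y)$ for all $t \geq 0$.

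Applying this fact one recession generator at a time, beginning at $\bar{x}$ and adding $\mu_k r^{(k)}$ in succession (each intermediate iterate remaining in $\mathcal{X}$, so the ray argument applies afresh at each stage), I would conclude $f(x) \geq f(\bar{x})$. Then, exactly as in the proof of Lemma~\ref{lemma:horstExtPt}, concavity gives $f(\bar{x}) = f\!\left(\sum_j \lambda_j v^{(j)}\right) \geq \sum_j \lambda_j f(v^{(j)}) \geq \min_{j} f(v^{(j)})$. Combining the two inequalities yields $f(x) \geq \min_{j \in \{1,\dots,J\}} f(v^{(j)})$ for every $x \in \mathcal{X}$. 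Since each extreme point $v^{(j)}$ itself lies in $\mathcal{X}$, this minimum is attained and equals the infimum, which is therefore finite and achieved at an extreme point of $\mathcal{X}$. I expect the only delicate point to be the ray argument: one must take care to invoke boundedness below along the ray emanating from an actual point of $\mathcal{X}$, and to verify that the stepwise addition of recession generators keeps every intermediate point feasible so that the non-decrease conclusion may be reused.
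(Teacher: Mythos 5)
Your proposal is correct and follows essentially the same route as the paper's proof: both invoke the Minkowski--Weyl decomposition, note that $\mathcal{X}\subseteq\R^p_+$ is pointed and hence has an extreme point, show that $f$ cannot decrease along any recession direction because that would drive $f$ below its finite infimum, and then reduce to the bounded-polytope case of Lemma~\ref{lemma:horstExtPt}. The only cosmetic difference is that the paper derives the non-decrease along rays via an explicit scaling $\delta$ producing a contradiction with the infimum, whereas you phrase it as the general fact that a concave univariate function bounded below on a half-line is non-decreasing; these are the same argument.
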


\begin{proof}

Suppose $\inf_{x \in \mathcal{X}} f(x) > - \infty$.  Let $\{x^{(j)}\}_{j=1}^J$ and $\{r^{(k)}\}_{k=1}^K$ be the extreme points and extreme rays of $\mathcal{X}$, and denote $\epsilon = \inf_{x \in \mathcal{X}} f(x)$, which is finite as the problem is feasible.  Note that 
$$\mathcal{X} = \conv \{x^{(j)}\}_{j=1}^J + \cone \{r^{(k)}\}_{k=1}^K.$$
Further, as $\mathcal{X} \subseteq \mathbb{R}^p_+$, it does not contain a line, which implies that it has as least one extreme point.

Select any $x \in \conv \{x^{(j)}\}_{j=1}^J$ and $r \in \cone \{r^{(k)}\}_{k=1}^K$. To prove by contradiction, suppose that $f(x + r) < f(x)$.  Take $\delta = \frac{f(x) - \epsilon + 1}{f(x) - f(x + r)}$ and note that $\delta \geq 1$ as $\epsilon \leq f(x + r)$.  Then, by concavity of $f$,
$$f(x + r) = f \left ( \frac{1}{\delta} (x + \delta r) + \frac{\delta - 1}{\delta} x \right ) \geq \frac{1}{\delta} f(x + \delta r) + \frac{\delta - 1}{\delta} f(x),$$
and
$$f(x + \delta r) \leq \delta \left ( f(x+r) - f(x) \right ) + f(x) = \epsilon - 1.$$
As $x + \delta r \in \mathcal{X}$, this contradicts the definition of $\epsilon$, and thus $f(x) \leq f(x + r)$.  This shows that 
$$\inf_{x \in \mathcal{X}} f(x) \quad = \quad \inf_{x \in \conv \{x^{(j)}\}_{j=1}^J} f(x).$$
The latter is a concave minimization over a bounded polytope, and thus attains its optimal value $\epsilon$ at an extreme point of $\conv \{x^{(j)}\}_{j=1}^J$ by Lemma~\ref{lemma:horstExtPt}. Extreme points of the convex hull of a discrete set are elements of the set, and these elements are extreme points of $\mathcal{X}$.  Thus, the optimal value $\epsilon$ is obtained at an extreme point of $\mathcal{X}$.

\end{proof}

\setcounter{repeatproposition}{1}
\begin{repeatproposition}
   The model \eqref{eq:deterministicP} is feasible and has an optimal solution at an extreme point of its feasible region.
\end{repeatproposition}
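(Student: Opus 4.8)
The plan is to prove feasibility by exhibiting an explicit point, then to show that the objective is a concave function that is bounded below on the feasible polyhedron, so that Lemma~\ref{lemma:concMinFinite} applies directly. The reason I would reach for Lemma~\ref{lemma:concMinFinite} rather than the compact version Lemma~\ref{lemma:horstExtPt} is that the feasible region is in fact \emph{unbounded}: one can ship material from a zone to another and immediately back, inflating the transport variables $x^{\text{TR,RM}}$ while the net contribution cancels in the inventory balances \eqref{eq:constrMatFlow}. So a boundedness-of-the-region argument is unavailable, and the crux is instead to control the one objective term that carries a negative sign.

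For feasibility, I would simply take $\overline{y} = 0$. This trivially satisfies the monotonicity constraints \eqref{eq:constrCapRECIncrease}--\eqref{eq:constrCapCPIncrease} (both sides vanish) and the capacity upper bounds \eqref{eq:constrCapRECUB}--\eqref{eq:constrCapCPUB} (since $u^{\text{REC}}, u^{\text{CP}} > 0$), so $\overline{y}$ meets the hypotheses of Proposition~\ref{prop:fullRecourse}. That proposition then furnishes a compatible $x \geq 0$ with $(\overline{y},x)$ satisfying \eqref{eq:constrProd}, \eqref{eq:constrMatFlow}, \eqref{eq:constrCap}. Hence the feasible set is a nonempty polyhedron contained in the nonnegative orthant, as required by Lemma~\ref{lemma:concMinFinite}. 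I would also record that the objective is concave: the planning term $\sum_t (1-\gamma)^{t-1} C^{\text{PL}}_t(y)$ is a nonnegative combination of the functions $f^{\text{REC}}, f^{\text{CP}}$, which are concave by Assumption~\ref{assump:dataProperties}, while the operational term is linear in $x$; their sum is a separable concave function of $(y,x)$.

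The main work, and the step I expect to be the genuine obstacle, is verifying $\inf > -\infty$. The planning term is nonnegative, since each $f \geq 0$ on $\R_+$ and the discount weights $(1-\gamma)^{t-1}$ are nonnegative. Within $C^{\text{OP}}_{\omega,t}$ every cost coefficient $c$, $v$, $\rho$ is nonnegative by Assumption~\ref{assump:dataProperties}, so the unique term capable of driving the objective to $-\infty$ is the sale revenue $-\eta\, v_{\omega,t,k}\, x^{\text{RM,S}}_{\omega,z,t,k}$. I would bound this uniformly on the feasible region: from \eqref{eq:constrRECProd} together with $x^{\text{RM,INV}} \geq 0$ we get $x^{\text{RM,S}}_{\omega,z,t,k} \leq \sum_{i,j} \Delta^{\text{REC}}_{k,i,j}\, x^{\text{RB,RM}}_{\omega,z,t,i,j}$, and each recycling throughput is capped through \eqref{eq:constrCapREC} and \eqref{eq:constrCapRECUB} by $\sum_i x^{\text{RB,RM}}_{\omega,z,t,i,j} \leq \sum_n y^{\text{REC}}_{z,l_t,j,n} \leq |\mathcal{N}^{\text{REC}}_{l_t}|\, u^{\text{REC}}$. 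Since $\eta, v \geq 0$, the revenue is thereby bounded above by a constant, so $C^{\text{OP}}_{\omega,t}$ is bounded below and, with nonnegative probabilities $p_\omega$ and weights, the full objective is bounded below.

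Having assembled a nonempty feasible polyhedron in $\R^p_+$, a concave objective, and a finite infimum, I would invoke Lemma~\ref{lemma:concMinFinite} to conclude that \eqref{eq:deterministicP} attains a finite optimal solution at an extreme point of its feasible region, completing the proof.
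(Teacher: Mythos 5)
Your proof is correct, and it reaches the same endpoint as the paper (feasibility via $y=0$ and Proposition~\ref{prop:fullRecourse}, then Lemma~\ref{lemma:concMinFinite} applied to a concave objective over a nonempty polyhedron in the nonnegative orthant), but the middle step --- establishing that the infimum is finite --- is argued by a genuinely different route. The paper drops all constraints except \eqref{eq:constrRECProd}, \eqref{eq:constrRBMatFlowInit}, \eqref{eq:constrRBMatFlow}, decouples the $y$- and $x$-problems, and then shows that along any recession direction of the resulting operational LP the retired-battery recursion forces $x^{\text{RB}}=x^{\text{RB,RM}}=0$ and hence $x^{\text{RM,S}}=0$, so no unbounded direction decreases the objective. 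You instead stay on the full feasible region and bound the only negatively-signed variables outright via the chain \eqref{eq:constrRECProd} $\Rightarrow x^{\text{RM,S}} \leq \sum_{i,j}\Delta^{\text{REC}}x^{\text{RB,RM}}$, then \eqref{eq:constrCapREC} and \eqref{eq:constrCapRECUB} $\Rightarrow \sum_i x^{\text{RB,RM}} \leq |\mathcal{N}^{\text{REC}}_{l}|u^{\text{REC}}$, which yields an explicit finite lower bound on the objective. Your argument is shorter and more elementary, and your opening observation that the feasible region is unbounded (circulating transport flows) correctly identifies why Lemma~\ref{lemma:horstExtPt} alone does not suffice. The trade-off is that your bound leans on the capacity upper bounds \eqref{eq:constrCapRECUB}, whereas the paper's recession-cone argument uses only the battery-inventory recursion and would survive without finite facility caps; under Assumption~\ref{assump:dataProperties} both hypotheses are available, so either proof is complete.
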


\begin{proof}
    We first establish that \eqref{eq:deterministicP} is feasible.  Take $y = 0$.  As $u^{\text{REC}} > 0$ and $u^{\text{CP}} > 0$ under Assumption~\ref{assump:dataProperties}, $y$ is feasible for \eqref{eq:constrCapRECIncrease}-\eqref{eq:constrCapCPUB}.  By Proposition~\ref{prop:fullRecourse}, there is some $x \geq 0$ so that $(y,x)$ satisfies \eqref{eq:constrProd}, \eqref{eq:constrMatFlow}, and \eqref{eq:constrCap}.  Then, $(y,x)$ is feasible for \eqref{eq:deterministicP}.
    
    Next, consider the relaxation of \eqref{eq:deterministicP}:
    \begin{equation}
        \label{eq:relaxPrimal}
        \begin{aligned}
            \min_{x,y} \quad & \sum_{t \in \mathcal{T}} (1 - \gamma)^{t-1} \left ( C^{\text{PL}}_t(y) + \sum_{\omega \in \Omega_{\sigma_t}} p_\omega C^{\text{OP}}_{\omega,t}(x) \right ) \\
            \text{s.t.} \quad & \eqref{eq:constrRECProd},\ \eqref{eq:constrRBMatFlowInit},\ \eqref{eq:constrRBMatFlow}\\
            & x \geq 0,\ y \geq 0.
        \end{aligned}
    \end{equation}
    Any feasible solution for \eqref{eq:deterministicP} is feasible for \eqref{eq:relaxPrimal} with the same objective value.  Note that, as constraints \eqref{eq:constrCap} are relaxed, there are no constraints coupling variables $x$ and $y$.  Thus, \eqref{eq:relaxPrimal} is equivalent to
    \begin{equation}
        \label{eq:relaxedPrimalDecomposed}
        \begin{aligned}
            \min_{y \geq 0} \quad & \sum_{t \in \mathcal{T}} (1 - \gamma)^{t-1} C^{\text{PL}}_t(y) \quad + \quad & \min_{x \geq 0} \quad & \sum_{t \in \mathcal{T}} (1 - \gamma)^{t-1} \sum_{\omega \in \Omega_{\sigma_t}} p_\omega C^{\text{OP}}_{\omega,t}(x) \\
            & & \text{s.t.} \quad & \eqref{eq:constrRECProd},\ \eqref{eq:constrRBMatFlowInit},\ \eqref{eq:constrRBMatFlow}.
        \end{aligned}
    \end{equation} 
    As $C^{\text{PL}}_t$ is monotonic increasing in each variable $y$ by Assumption~\ref{assump:dataProperties}, $y = 0$ is optimal for the first problem of \eqref{eq:relaxedPrimalDecomposed}.

    Consider any direction $x \geq 0$ in which the feasible region of the second problem of \eqref{eq:relaxedPrimalDecomposed} is unbounded.  By \eqref{eq:constrRBMatFlow}, 
    $$x^{\text{RB}}_{\omega,z,t,i} - x^{\text{RB}}_{a_{\omega}(t-1),z,t-1,i} + \sum_{j \in \mathcal{J}} x^{\text{RB,RM}}_{\omega,z,t,i,j} - \sum_{z' \in \mathcal{Z} \setminus \{z\}} \left ( x^{\text{TR,RB}}_{\omega,z',z,t,i} - x^{\text{TR,RB}}_{\omega,z,z',t,i} \right ) = 0.$$  Summing over $z$ yields $\sum_{z \in \mathcal{Z}} \left ( x^{\text{RB}}_{\omega,z,t,i} - x^{\text{RB}}_{a_{\omega}(t-1),z,t-1,i} + \sum_{j \in \mathcal{J}} x^{\text{RB,RM}}_{\omega,z,t,i,j} \right ) = 0$.  As $x^{\text{RB}}_{\omega,z,0,i} = 0$ and $x \geq 0$, by induction $x^{\text{RB}} = 0$ and $x^{\text{RB,RM}} = 0$.  Then, \eqref{eq:constrRECProd} gives $x^{\text{RM,INV}}_{\omega,z,t,k} + x^{\text{RM,S}}_{\omega,z,t,k} - \sum_{i \in \mathcal{I},\ j \in \mathcal{J}} \Delta^{\text{REC}}_{k,i,j} x^{\text{RB,RM}}_{\omega,z,t,i,j} = 0$, so $x^{\text{RM,INV}} = 0$ and $x^{\text{RM,S}} = 0$ as $x \geq 0$.

    Then, consider the change in objective in direction $x$, given by
    $\sum_{t \in \mathcal{T}} (1 - \gamma)^{t-1} \sum_{\omega \in \Omega_{\sigma_t}} p_\omega C^{\text{OP}}_{\omega,t}(x).$
    As $\gamma < 1$, $c \geq 0$, $v \geq 0$, $p \geq 0$, $x \geq 0$, and $x^{\text{RM,S}} = 0$, it holds that $\sum_{t \in \mathcal{T}} (1 - \gamma)^{t-1} \sum_{\omega \in \Omega_{\sigma_t}} p_\omega C^{\text{OP}}_{\omega,t}(x) \geq 0$, and the direction does not improve the objective.  As the second problem of \eqref{eq:relaxedPrimalDecomposed} is a linear program, this proves that its objective is bounded.  Thus, neither problem in \eqref{eq:relaxedPrimalDecomposed} is unbounded, so \eqref{eq:relaxPrimal} and \eqref{eq:deterministicP} are bounded in objective.  As \eqref{eq:deterministicP} is a concave minimization over a polyhedron contained in the nonnegative orthant and is bounded in objective, it attains its optimal value at an extreme point of its feasible region by Lemma~\ref{lemma:concMinFinite}.

\end{proof}

\setcounter{repeattheorem}{0}
\begin{repeattheorem}
    Let $(\tilde{y},\tilde{x})$ be an extreme point of $\{(y,x) \geq 0\ :\ \eqref{eq:constrProd},\ \eqref{eq:constrMatFlow},\ \eqref{eq:constrCap}\}$.  Then, $\tilde{y}^{\text{REC}}$ satisfies \eqref{eq:recExtPtProperty} and $\tilde{y}^{\text{CP}}$ satisfies the corresponding property.
\end{repeattheorem}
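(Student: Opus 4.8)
The plan is to exploit a separability observation: for any fixed triple $(z,l,j)$, the individual facility variables $\{y^{\text{REC}}_{z,l,j,n}\}_{n \in \mathcal{N}^{\text{REC}}_l}$ enter the constraint system \emph{only} through their sum $Y_{z,l,j} = \sum_{n} y^{\text{REC}}_{z,l,j,n}$, apart from the box constraints $0 \le y^{\text{REC}}_{z,l,j,n} \le u^{\text{REC}}$ coming from nonnegativity and \eqref{eq:constrCapRECUB}. Indeed, the throughput constraint \eqref{eq:constrCapREC} and the monotonicity constraints \eqref{eq:constrCapRECIncrease} (both the one for planning period $l$ and the one for $l+1$) involve these variables only through $Y_{z,l,j}$, and no constraint couples $y^{\text{REC}}_{z,l,j,n}$ for distinct indices $n$. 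The $x$-variables and the remaining constraints \eqref{eq:constrProd}, \eqref{eq:constrMatFlow} do not reference $y^{\text{REC}}$ at all beyond \eqref{eq:constrCapREC}.

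Next I would reduce the target property \eqref{eq:recExtPtProperty} to the cleaner statement that, for each $(z,l,j)$, at most one facility has capacity strictly inside the open interval $(0,u^{\text{REC}})$. This reduction is a short counting argument. If exactly $m$ facilities sit at $u^{\text{REC}}$ and at most one lies in $(0,u^{\text{REC}})$, then $Y_{z,l,j}/u^{\text{REC}} \in (m,\,m+1]$, so $\lceil Y_{z,l,j}/u^{\text{REC}}\rceil \in \{m,m+1\}$, and substituting this into both lines of \eqref{eq:recExtPtProperty} verifies them directly (the counts of facilities at $u^{\text{REC}}$ and at $0$ meet the required bounds, with equality in the generic case). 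Conversely, two or more interior facilities render the two inequalities of \eqref{eq:recExtPtProperty} jointly infeasible, because matching the ceiling then forces their interior capacities both to sum to at most $u^{\text{REC}}$ and to exceed $u^{\text{REC}}$.

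I would then prove the reduced statement by contradiction against extremality. Suppose some $(z,l,j)$ admits two facilities $n_1 \ne n_2$ with $\tilde{y}^{\text{REC}}_{z,l,j,n_1},\, \tilde{y}^{\text{REC}}_{z,l,j,n_2} \in (0,u^{\text{REC}})$. Define two points that agree with $(\tilde{y},\tilde{x})$ in every coordinate except that one adds $+\varepsilon$ to coordinate $n_1$ and $-\varepsilon$ to coordinate $n_2$, and the other does the reverse, leaving $\tilde{x}$ and all other $y$-coordinates fixed. For $\varepsilon>0$ small enough both perturbed coordinates remain in $[0,u^{\text{REC}}]$; since the perturbation preserves $Y_{z,l,j}$, the separability observation guarantees that every constraint still holds. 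These two points are feasible and distinct, and $(\tilde{y},\tilde{x})$ is their midpoint, contradicting that it is an extreme point. The statement for $\tilde{y}^{\text{CP}}$ follows by the identical argument after replacing $u^{\text{REC}}$, \eqref{eq:constrCapREC}, \eqref{eq:constrCapRECIncrease}, and \eqref{eq:constrCapRECUB} with their cathode-production analogues \eqref{eq:constrCapCP}, \eqref{eq:constrCapCPIncrease}, and \eqref{eq:constrCapCPUB}.

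I expect the only real obstacle to be the bookkeeping in the reduction step, namely carefully verifying the ceiling arithmetic in both directions and confirming the joint infeasibility of the two inequalities when two interior facilities are present; the extreme-point perturbation itself is routine once the separability of the $y^{\text{REC}}_{z,l,j,n}$ through $Y_{z,l,j}$ has been established.
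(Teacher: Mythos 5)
Your proposal is correct and follows essentially the same route as the paper: both arguments reduce \eqref{eq:recExtPtProperty} to the condition that at most one facility per $(z,l,j)$ lies strictly inside $(0,u^{\text{REC}})$ (the paper does this counting by contraposition on each inequality separately, you package it as an equivalence), and both then apply the same sum-preserving $\pm\varepsilon$ perturbation on two interior facilities, exploiting that the $y^{\text{REC}}_{z,l,j,n}$ enter all non-box constraints only through $Y_{z,l,j}$, to contradict extremality. The only nit is that your claim $Y_{z,l,j}/u^{\text{REC}}\in(m,m+1]$ should read $\{m\}\cup(m,m+1)$ so as to cover the case of zero interior facilities, but the conclusion $\lceil Y_{z,l,j}/u^{\text{REC}}\rceil\in\{m,m+1\}$ that you actually use downstream is correct.
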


\begin{proof}
    The comparable solution structure for $\tilde{y}^{\text{CP}}$ with total capacity $Y_{z,l,k} = \sum_{n \in \mathcal{N}^{\text{CP}}_{l,k}} \tilde{y}^{\text{CP}}_{z,l,k,n}$ is given by
    \begin{equation}
        \label{eq:CPExtPtProperty}
        \begin{aligned}
            \mathcal{C}(\{\tilde{y}^{\text{CP}}_{z,l,k,n}\}_{n \in \mathcal{N}^{\text{CP}}_{l,k}};u^{\text{CP}}) & \geq \left \lceil \frac{Y_{z,l,k}}{u^{\text{CP}}} \right \rceil - 1;\\
            \mathcal{C}(\{\tilde{y}^{\text{CP}}_{z,l,k,n}\}_{n \in \mathcal{N}^{\text{CP}}_{l,k}};0) & \geq |\mathcal{N}^{\text{CP}}_{l,k}| - \left \lceil \frac{Y_{z,l,k}}{u^{\text{CP}}} \right \rceil \quad \forall k \in \mathcal{K}^{\text{CP}},\ l \in \mathcal{L},\ z \in \mathcal{Z}.
        \end{aligned}
    \end{equation}
    We demonstrate the result only for the property of the recycling capacity solution $\tilde{y}^{\text{REC}}$, as the same arguments apply to the cathode production capacity solution $\tilde{y}^{\text{CP}}$.  The result is shown by contraposition.  Denote the polyhedron $\mathcal{X} = \{(y,x) \geq 0\ :\ \eqref{eq:constrProd},\ \eqref{eq:constrMatFlow},\ \eqref{eq:constrCap}\}$ and consider any $(y,x) \in \mathcal{X}$.  Fix some $z,l,j$ and let the total recycling capacity be $Y_{z,l,j} = \sum_{n \in \mathcal{N}^{\text{REC}}_l} y^{\text{REC}}_{z,l,j,n}$.  Suppose that \eqref{eq:recExtPtProperty} does not hold for $y^{\text{REC}}$.
    
    First, consider the case where $\mathcal{C}(\{y^{\text{REC}}_{z,l,j,n}\}_{n \in \mathcal{N}^{\text{REC}}_l};u^{\text{REC}}) \leq \left \lceil \frac{Y_{z,l,j}}{u^{\text{REC}}} \right \rceil - 2$.  Suppose for contradiction that there are one or fewer indices $n$ with $u^{\text{REC}} > y^{\text{REC}}_{z,l,j,n} > 0$.  Then, the total number of indices corresponding to nonzero elements is bounded by $\left \lceil \frac{Y_{z,l,j}}{u^{\text{REC}}} \right \rceil - 1$, and
    $$\sum_{n \in \mathcal{N}^{\text{REC}}_l} y^{\text{REC}}_{z,l,j,n} \leq u^{\text{REC}} \left ( \left \lceil \frac{Y_{z,l,j}}{u^{\text{REC}}} \right \rceil - 1 \right ) < Y_{z,l,j} = \sum_{n \in \mathcal{N}^{\text{REC}}_l} y^{\text{REC}}_{z,l,j,n}.$$
    By this contradiction, there must be at least two indices $n_1$, $n_2$ with $u^{\text{REC}} > y^{\text{REC}}_{z,l,j,n} > 0$ for $n \in \{n_1,n_2\}$.  
    
    Second, consider the case where $\mathcal{C}(\{y^{\text{REC}}_{z,l,j,n}\}_{n \in \mathcal{N}^{\text{REC}}_l};0) \leq |\mathcal{N}^{\text{REC}}_l| - \left \lceil \frac{Y_{z,l,j}}{u^{\text{REC}}} \right \rceil - 1$.  Again, suppose for contradiction that there are one or fewer indices $n$ with $u^{\text{REC}} > y^{\text{REC}}_{z,l,j,n} > 0$.  Then, there are at least $\left \lceil \frac{Y_{z,l,j}}{u^{\text{REC}}} \right \rceil$ elements at the upper bound $u^{\text{REC}}$ and one additional nonzero element (that may also take the value $u^{\text{REC}}$), so 
    $$\sum_{n \in \mathcal{N}^{\text{REC}}_l} y^{\text{REC}}_{z,l,j,n} > u^{\text{REC}} \left \lceil \frac{Y_{z,l,j}}{u^{\text{REC}}} \right \rceil \geq Y_{z,l,j} = \sum_{n \in \mathcal{N}^{\text{REC}}_l} y^{\text{REC}}_{z,l,j,n}.$$  By this contradiction, there must again be at least two indices $n_1$, $n_2$ with $u^{\text{REC}} > y^{\text{REC}}_{z,l,j,n} > 0$ for $n \in \{n_1,n_2\}$.
    
    In either scenario, select such indices $n_1$, $n_2$, and denote 
    $$\delta = \min \{y^{\text{REC}}_{z,l,j,n_1}, y^{\text{REC}}_{z,l,j,n_2}, u^{\text{REC}} - y^{\text{REC}}_{z,l,j,n_1}, u^{\text{REC}} - y^{\text{REC}}_{z,l,j,n_2}\}.$$  By construction, $\delta > 0$.  We construct perturbed solutions
    $$\overline{y}^{\text{REC}}_{z,l,j,n}(1) = \begin{cases}
    y^{\text{REC}}_{z,l,j,n} & n \not \in \{n_1, n_2\}\\
    y^{\text{REC}}_{z,l,j,n} + \delta & n = n_1\\
    y^{\text{REC}}_{z,l,j,n} - \delta & n = n_2,
    \end{cases}$$
    and
    $$\overline{y}^{\text{REC}}_{z,l,j,n}(2) = \begin{cases}
    y^{\text{REC}}_{z,l,j,n} & n \not \in \{n_1, n_2\}\\
    y^{\text{REC}}_{z,l,j,n} - \delta & n = n_1\\
    y^{\text{REC}}_{z,l,j,n} + \delta & n = n_2.
    \end{cases}$$
    Let $\overline{y}^{\text{CP}}(1) = \overline{y}^{\text{CP}}(2) = y^{\text{CP}}$.
    By construction, $0 \leq \overline{y}^{\text{REC}}_{z,l,j,n}(1) \leq u^\text{REC}$ and $\sum_{n \in \mathcal{N}^{\text{REC}}_l} \overline{y}^{\text{REC}}_{z,l,j,n}(1) = \sum_{n \in \mathcal{N}^{\text{REC}}_l} y^{\text{REC}}_{z,l,j,n}$, so $(\overline{y}(1), x)$ satisfies all constraints of $\mathcal{X}$.  Similarly, $(\overline{y}(2),x) \in \mathcal{X}$.  Finally, we note that $(y,x) = \frac{1}{2} (\overline{y}(1),x) + \frac{1}{2} (\overline{y}(2),x)$, so $(y,x)$ is a convex combination of other points in $\mathcal{X}$, and thus is not an extreme point.

    This shows that, if a solution $y^{\text{REC}}$ does not satisfy \eqref{eq:recExtPtProperty}, then the solution $(y,x)$ is not an extreme point of $\mathcal{X}$.  Thus, the contrapositive of this result is also true, namely, an extreme point $(\tilde{y},\tilde{x})$ of $\mathcal{X}$ must satisfy \eqref{eq:recExtPtProperty}.  By similar logic, it can be shown that an extreme point must also satisfy \eqref{eq:CPExtPtProperty}.
    
\end{proof}
    
\setcounter{repeattheorem}{1}
\begin{repeattheorem}
    The models \eqref{eq:deterministicP} and \eqref{eq:reformulatedP} have the same optimal objective value.
\end{repeattheorem}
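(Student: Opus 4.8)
The plan is to establish the two inequalities $\mathrm{val}\eqref{eq:reformulatedP}\le\mathrm{val}\eqref{eq:deterministicP}$ and $\mathrm{val}\eqref{eq:deterministicP}\le\mathrm{val}\eqref{eq:reformulatedP}$ separately, in each case by exhibiting an objective-preserving map from a feasible point of one model to a feasible point of the other. The single device driving both directions is the identity $f(0)=0$ from Assumption~\ref{assump:dataProperties}: a facility or line held at zero capacity contributes nothing to planning cost, so for each triple $(z,l,j)$ the summand $\sum_{n\in\mathcal{N}^{\text{REC}}_l} f^{\text{REC}}_{z,j}(y^{\text{REC}}_{z,l,j,n})$ is determined entirely by the number of facilities sitting at the upper bound $u^{\text{REC}}$ together with the value of any single facility held strictly between its bounds; the fixed-cost term $w$ in \eqref{eq:objFuncForm} is thereby charged exactly once per constructed facility, which is precisely what $\overline{C}^{\text{PL}}_t$ encodes.

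For $\mathrm{val}\eqref{eq:reformulatedP}\le\mathrm{val}\eqref{eq:deterministicP}$, I would invoke Proposition~\ref{prop:finiteP} to obtain an optimal \emph{extreme point} $(\tilde y,\tilde x)$ of \eqref{eq:deterministicP}, and Theorem~\ref{thm:extPtStructure} to conclude that $\tilde y^{\text{REC}}$ obeys \eqref{eq:recExtPtProperty} and $\tilde y^{\text{CP}}$ obeys \eqref{eq:CPExtPtProperty}, so that for every $(z,l,j)$ at most one facility has capacity in the open interval $(0,u^{\text{REC}})$. I then set $y^{\text{REC}}_{z,l,j}$ to be the count of facilities at $u^{\text{REC}}$ and $y^{\text{REC}}_{z,l,j,+}$ to be the value of the lone interior facility (taking $y^{\text{REC}}_{z,l,j,+}=0$ when there is none), with the proviso that in the corner case where all $|\mathcal{N}^{\text{REC}}_l|$ facilities sit at $u^{\text{REC}}$ I demote one of them, recording $y^{\text{REC}}_{z,l,j}=|\mathcal{N}^{\text{REC}}_l|-1$ and $y^{\text{REC}}_{z,l,j,+}=u^{\text{REC}}$; the cathode variables are handled identically. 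Total capacity is preserved, $u^{\text{REC}}y^{\text{REC}}_{z,l,j}+y^{\text{REC}}_{z,l,j,+}=\sum_{n}\tilde y^{\text{REC}}_{z,l,j,n}$, so \eqref{eq:constrCapRECRef} and \eqref{eq:constrCapRECIncreaseRef} follow from \eqref{eq:constrCapREC} and \eqref{eq:constrCapRECIncrease}; the range \eqref{eq:constrCapRECInt} and bound \eqref{eq:constrCapRECUBRef} hold by construction; $\tilde x$ is left unchanged so \eqref{eq:constrProd} and \eqref{eq:constrMatFlow} still hold; and the cost identity above shows $\sum_{n}f^{\text{REC}}_{z,j}(\tilde y^{\text{REC}}_{z,l,j,n})=y^{\text{REC}}_{z,l,j}f^{\text{REC}}_{z,j}(u^{\text{REC}})+f^{\text{REC}}_{z,j}(y^{\text{REC}}_{z,l,j,+})$, matching $\overline{C}^{\text{PL}}_t$ term by term, so the objective is unchanged.

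For the reverse inequality I would run the construction backwards without needing any structure theorem: given \emph{any} feasible point of \eqref{eq:reformulatedP}, expand each $(z,l,j)$ block into $y^{\text{REC}}_{z,l,j}$ facilities at $u^{\text{REC}}$, one facility at $y^{\text{REC}}_{z,l,j,+}$, and the remaining facilities (at least one exists since $y^{\text{REC}}_{z,l,j}\le|\mathcal{N}^{\text{REC}}_l|-1$) at $0$, analogously for cathode production, keeping $x$ fixed. Again total capacity matches $u^{\text{REC}}y^{\text{REC}}_{z,l,j}+y^{\text{REC}}_{z,l,j,+}$, so \eqref{eq:constrCapREC}--\eqref{eq:constrCapCPUB} follow from \eqref{eq:constrCapRef}, and $f(0)=0$ yields the same objective equality, giving a feasible point of \eqref{eq:deterministicP} of equal cost. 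Combining the two maps forces the optimal values to coincide. I expect essentially no deep obstacle here, since the real content lives in Theorem~\ref{thm:extPtStructure}; the only points demanding care are the boundary cases of the mapping (total capacity an exact multiple of the upper bound, and the all-full case) and the bookkeeping that ensures each fixed cost $w$ is counted once and only once, both of which are resolved cleanly by the $f(0)=0$ identity.
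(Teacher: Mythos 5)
Your proposal is correct and follows essentially the same route as the paper's proof: one direction expands any feasible point of \eqref{eq:reformulatedP} into an explicit facility-level solution of \eqref{eq:deterministicP}, and the other invokes Proposition~\ref{prop:finiteP} and Theorem~\ref{thm:extPtStructure} to obtain an optimal extreme point with the structure \eqref{eq:recExtPtProperty}--\eqref{eq:CPExtPtProperty} and collapses it via the $f(0)=0$ identity. Your explicit treatment of the all-full corner case (demoting one facility so that $y^{\text{REC}}_{z,l,j}\le|\mathcal{N}^{\text{REC}}_l|-1$) matches the paper's device of excluding a chosen index $\overline{n}_{z,l,j}$ from the count.
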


\begin{proof}
    We first demonstrate that a feasible solution for \eqref{eq:reformulatedP} can be mapped to an equivalent feasible solution for \eqref{eq:deterministicP}.  Let $(\overline{y},\overline{x})$ be feasible for \eqref{eq:reformulatedP}.  We define $\tilde{y}$ by
    \begin{equation*}
        \begin{aligned}
            \tilde{y}^{\text{REC}}_{z,l,j,n} & = \begin{cases}
                u^{\text{REC}} & n \leq \overline{y}^{\text{REC}}_{z,l,j}\\
                \overline{y}^{\text{REC}}_{z,l,j,+} & n = \overline{y}^{\text{REC}}_{z,l,j} + 1\\
                0 & \text{otherwise},
            \end{cases}\\
            \tilde{y}^{\text{CP}}_{z,l,k,n} & = \begin{cases}
                u^{\text{CP}} & n \leq \overline{y}^{\text{CP}}_{z,l,k}\\
                \overline{y}^{\text{CP}}_{z,l,k,+} & n = \overline{y}^{\text{CP}}_{z,l,k} + 1\\
                0 & \text{otherwise}.
            \end{cases}
        \end{aligned}
    \end{equation*}
    By construction, $0 \leq \tilde{y}^{\text{REC}}_{z,l,j,n} \leq u^{\text{REC}}$ and $0 \leq \tilde{y}^{\text{CP}}_{z,l,k,n} \leq u^{\text{CP}}$.  Further, $\sum_{n \in \mathcal{N}^{\text{REC}}_{l}} \tilde{y}^{\text{REC}}_{z,l,j,n} = u^{\text{REC}} \overline{y}^{\text{REC}}_{z,l,j} + \overline{y}^{\text{REC}}_{z,l,j,+}$ and similarly $\sum_{n \in \mathcal{N}^{\text{CP}}_{l,k}} \tilde{y}^{\text{CP}}_{z,l,k,n} = u^{\text{CP}} \overline{y}^{\text{CP}}_{z,l,k} + \overline{y}^{\text{CP}}_{z,l,k,+}$.  Due to feasibility of $(\overline{y},\overline{x})$ for \eqref{eq:constrCapRef}, the solution $(\tilde{y},\overline{x})$ is feasible for constraints \eqref{eq:constrCap}.  Further, $\overline{x}$ remains feasible for \eqref{eq:constrProd} and \eqref{eq:constrMatFlow}. Thus, $(\tilde{y},\overline{x})$ is feasible for \eqref{eq:deterministicP}.  The objective values satisfy
    \begin{equation*}
        \begin{aligned}
            C^{\text{PL}}_t(\tilde{y}) & = \sum_{z \in \mathcal{Z}} \left ( \sum_{j \in \mathcal{J}} \sum_{n \in \mathcal{N}^{\text{REC}}_{l_t}} f^{\text{REC}}_{z,j}(\tilde{y}^{\text{REC}}_{z,l_t,j,n}) + \sum_{k \in \mathcal{K}^{\text{CP}}} \sum_{n \in \mathcal{N}^{\text{CP}}_{l_t,k}} f^{\text{CP}}_{z,k}(\tilde{y}^{\text{CP}}_{z,l_t,k,n}) \right )\\
            & = \sum_{z \in \mathcal{Z}} \left ( \sum_{j \in \mathcal{J}} \left ( \overline{y}^{\text{REC}}_{z,l_t,j} f^{\text{REC}}_{z,j} (u^{\text{REC}}) + f^{\text{REC}}_{z,j}(\overline{y}^{\text{REC}}_{z,l_t,j,+}) \right ) + \sum_{k \in \mathcal{K}^{\text{CP}}} \left ( \overline{y}^{\text{CP}}_{z,l_t,k} f^{\text{CP}}_{z,k}(u^{\text{CP}}) + f^{\text{CP}}_{z,k}(\overline{y}^{\text{CP}}_{z,l_t,k,+}) \right ) \right )\\
            & = \overline{C}^{\text{PL}}_t(\overline{y}),
        \end{aligned}
    \end{equation*}
    as $f(0) = 0$.  Thus $(\tilde{y},\overline{x})$ has the same objective value in \eqref{eq:deterministicP} as $(\overline{y},\overline{x})$ in \eqref{eq:reformulatedP}.

In the opposite direction, we consider an optimal solution for \eqref{eq:deterministicP} that occurs at an extreme point of its feasible region, which exists by Proposition~\ref{prop:finiteP}.  Denote this optimal extreme point $(\tilde{y},\tilde{x})$.  By Theorem~\ref{thm:extPtStructure}, this solution satisfies \eqref{eq:recExtPtProperty} and \eqref{eq:CPExtPtProperty}.

    As $u^{\text{REC}} > 0$ under Assumption~\ref{assump:dataProperties}, each index $n \in \mathcal{N}^{\text{REC}}_l$ can be counted by at most one of $\mathcal{C}(\{\tilde{y}^{\text{REC}}_{z,l,j,n}\}_{n \in \mathcal{N}^{\text{REC}}_l};0)$ and $\mathcal{C}(\{\tilde{y}^{\text{REC}}_{z,l,j,n}\}_{n \in \mathcal{N}^{\text{REC}}_l};u^{\text{REC}})$.  By this property and the structure \eqref{eq:recExtPtProperty}, 
    $$|\mathcal{N}^{\text{REC}}_l| \geq \mathcal{C}(\{\tilde{y}^{\text{REC}}_{z,l,j,n}\}_{n \in \mathcal{N}^{\text{REC}}_l};0) + \mathcal{C}(\{\tilde{y}^{\text{REC}}_{z,l,j,n}\}_{n \in \mathcal{N}^{\text{REC}}_l};u^{\text{REC}}) \geq |\mathcal{N}^{\text{REC}}| - 1.$$
    If the lower bound holds with equality, we can select the unique $\overline{n}_{z,l,j}$ so that $0 < \tilde{y}^{\text{REC}}_{z,l,j,(\overline{n}_{z,l,j})} < u^{\text{REC}}$.  Otherwise, we select arbitrary $\overline{n}_{z,l,j}$.  The index has the property that $\tilde{y}^{\text{REC}}_{z,l,j,n} \in \{0,u^{\text{REC}}\}$ for any $n \neq \overline{n}_{z,l,j}$. We can similarly select $\overline{n}_{z,l,k}$ for $\tilde{y}^{\text{CP}}$.
    Now, we define 
    \begin{equation*}
        \begin{aligned}
            & \overline{y}^{\text{REC}}_{z,l,j} = \mathcal{C}(\{\tilde{y}^{\text{REC}}_{z,l,j,n}\}_{n \in \mathcal{N}^{\text{REC}}_l \setminus \{\overline{n}_{z,l,j}\}};u^{\text{REC}});\\
            & \overline{y}^{\text{REC}}_{z,l,j,+} = \tilde{y}^{\text{REC}}_{z,l,j,\overline{n}_{z,l,j}};\\
            & \overline{y}^{\text{CP}}_{z,l,k} = \mathcal{C}(\{\tilde{y}^{\text{CP}}_{z,l,k,n}\}_{n \in \mathcal{N}^{\text{CP}}_{l,k} \setminus \{\overline{n}_{z,l,k}\}};u^{\text{REC}}); \text{ and}\\
            & \overline{y}^{\text{CP}}_{z,l,k,+} = \tilde{y}^{\text{CP}}_{z,l,k,\overline{n}_{z,l,k}}.\\
        \end{aligned}
    \end{equation*}
    By construction, $\overline{y}^{\text{REC}}_{z,l,j}$ and $\overline{y}^{\text{CP}}_{z,l,k}$ satisfy the integrality constraints \eqref{eq:constrCapRECInt} and \eqref{eq:constrCapCPInt}, and further $\overline{y}^{\text{REC}}_{z,l,j,+}$ and $\overline{y}^{\text{CP}}_{z,l,k,+}$ satisfy nonnegativity and the upper bounds \eqref{eq:constrCapRECUBRef} and \eqref{eq:constrCapCPUBRef} by the feasibility of $\tilde{y}$ for \eqref{eq:deterministicP}.  By definition of $\overline{n}$,
    $$\sum_{n \in \mathcal{N}^{\text{REC}}_{l}} \tilde{y}^{\text{REC}}_{z,l,j,n} = u^{\text{REC}} \mathcal{C}(\{\tilde{y}^{\text{REC}}_{z,l,j,n}\}_{n \in \mathcal{N}^{\text{REC}}_l \setminus \{\overline{n}_{z,l,j}\}};u^{\text{REC}}) + \tilde{y}^{\text{REC}}_{z,l,j,(\overline{n}_{z,l,j})} = u^{\text{REC}} \overline{y}^{\text{REC}}_{z,l,j} + \overline{y}^{\text{REC}}_{z,l,j,+},$$
    and similarly $$\sum_{n \in \mathcal{N}^{\text{CP}}_{l,k}} \tilde{y}^{\text{CP}}_{z,l,k,n} = u^{\text{CP}} \overline{y}^{\text{CP}}_{z,l,k} + \overline{y}^{\text{CP}}_{z,l,k,+}.$$
    Thus, feasibility of $(\tilde{y},\tilde{x})$ for \eqref{eq:constrCap} implies that $(\overline{y},\tilde{x})$ is feasible for \eqref{eq:constrCapRECRef}-\eqref{eq:constrCapCPIncreaseRef}, and $\tilde{x}$ remains feasible for \eqref{eq:constrProd} and \eqref{eq:constrMatFlow}.  Thus, $(\overline{y},\tilde{x})$ is feasible for \eqref{eq:reformulatedP}.  Considering the objective,
    \begin{equation*}
        \begin{aligned}
            \sum_{n \in \mathcal{N}^{\text{REC}}_{l}} f^{\text{REC}}_{z,j}(\tilde{y}^{\text{REC}}_{z,l,j,n}) & = \mathcal{C}(\{\tilde{y}^{\text{REC}}_{z,l,j,n}\}_{n \in \mathcal{N}^{\text{REC}}_l \setminus \{\overline{n}_{z,l,j}\}};u^{\text{REC}}) f^{\text{REC}}_{z,j}(u^{\text{REC}}) + f^{\text{REC}}_{z,j}(\tilde{y}^{\text{REC}}_{z,l,j,(\overline{n}_{z,l,j})})\\
            & = \overline{y}^{\text{REC}}_{z,l,j} f^{\text{REC}}_{z,j}(u^{\text{REC}}) + f^{\text{REC}}_{z,j}(\overline{y}^{\text{REC}}_{z,l,j,+}),
        \end{aligned}
    \end{equation*}
    as $f^{\text{REC}}_{z,j}(0) = 0$ by Assumption~\ref{assump:dataProperties}.  Again, a similar result holds for $\tilde{y}^{\text{CP}}$.  Therefore, the objectives satisfy
    \begin{equation*}
        \begin{aligned}
            C^{\text{PL}}_t(\tilde{y}) & = \sum_{z \in \mathcal{Z}} \left ( \sum_{j \in \mathcal{J}} \sum_{n \in \mathcal{N}^{\text{REC}}_{l_t}} f^{\text{REC}}_{z,j}(\tilde{y}^{\text{REC}}_{z,l_t,j,n}) + \sum_{k \in \mathcal{K}^{\text{CP}}} \sum_{n \in \mathcal{N}^{\text{CP}}_{l_t,k}} f^{\text{CP}}_{z,k}(\tilde{y}^{\text{CP}}_{z,l_t,k,n}) \right )\\
            & = \sum_{z \in \mathcal{Z}} \left ( \sum_{j \in \mathcal{J}} \overline{y}^{\text{REC}}_{z,l_t,j} f^{\text{REC}}_{z,j}(u^{\text{REC}}) + f^{\text{REC}}_{z,j}(\overline{y}^{\text{REC}}_{z,l_t,j,+}) + \sum_{k \in \mathcal{K}^{\text{CP}}} \overline{y}^{\text{CP}}_{z,l_t,k} f^{\text{CP}}_{z,k}(u^{\text{CP}}) + f^{\text{CP}}_{z,k}(\overline{y}^{\text{CP}}_{z,l_t,k,+}) \right )\\
            & = \overline{C}^{\text{PL}}_t(\overline{y}),
        \end{aligned}
    \end{equation*}
    and $(\overline{y},\tilde{x})$ has the same objective value in \eqref{eq:reformulatedP} as $(\tilde{y},\tilde{x})$ has in \eqref{eq:deterministicP}.  We have already shown that any feasible solution for \eqref{eq:reformulatedP} has a corresponding feasible solution for \eqref{eq:deterministicP} with the same objective value, so $(\overline{y},\tilde{x})$ is optimal for \eqref{eq:reformulatedP}, as otherwise there must also be a better solution than $(\tilde{y},\tilde{x})$ for \eqref{eq:deterministicP}.  Therefore, as $(\tilde{y},\tilde{x})$ is optimal for \eqref{eq:deterministicP} and $(\overline{y},\tilde{x})$ is optimal for \eqref{eq:reformulatedP} with the same objective value, the problems have the same optimal objective.
    
\end{proof}

\begin{lemma}
    \label{lemma:pwlFuncProperties}
    The functions $\overline{f}_i$defined in \eqref{eq:underapproximatorConstruction} have the following properties:
    \begin{enumerate}
        \item $\overline{f}_i(y) \leq f_i(y)$ for $y \in [y_i^{(1)}, y_i^{(k_i)}]$;
        \item $\overline{f}_i$ is concave and continuous; and
        \item $\overline{f}_i(y_i^{(j)}) = f_i(y_i^{(j)})$ for all $j \in \{1,\dots,k_i\}$.
    \end{enumerate}
\end{lemma}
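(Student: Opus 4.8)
The plan is to verify the three properties in the order 3, 2, 1, since exactness at the breakpoints (property 3) is the foundation for the other two and follows immediately from the definition. For any breakpoint $y_i^{(j)}$ with $j \le k_i - 1$, I would substitute $y_i = y_i^{(j)}$ into the formula \eqref{eq:underapproximatorConstruction} for the piece defined on $[y_i^{(j)}, y_i^{(j+1)})$; the factor $(y_i - y_i^{(j)})$ vanishes, leaving exactly $f_i(y_i^{(j)})$. For the rightmost breakpoint $y_i^{(k_i)}$, which is the endpoint of the domain not covered by a half-open piece, I would take the value of the last piece, i.e., the limit as $y_i \to (y_i^{(k_i)})^-$ along the piece on $[y_i^{(k_i-1)}, y_i^{(k_i)})$, which evaluates to $f_i(y_i^{(k_i)})$.

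Next I would establish property 2. Continuity at each interior breakpoint $y_i^{(j)}$ follows because the left piece (on $[y_i^{(j-1)}, y_i^{(j)})$) approaches $f_i(y_i^{(j)})$ as $y_i \to (y_i^{(j)})^-$, while the right piece takes value $f_i(y_i^{(j)})$ at $y_i^{(j)}$, so the one-sided values agree; together with the affine (hence continuous) behavior on the interior of each piece, $\overline{f}_i$ is continuous on $[y_i^{(1)}, y_i^{(k_i)}]$. For concavity, I would note that on piece $j$ the function is affine with slope equal to the secant slope $m_j = \bigl(f_i(y_i^{(j+1)}) - f_i(y_i^{(j)})\bigr) / \bigl(y_i^{(j+1)} - y_i^{(j)}\bigr)$. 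Since $f_i$ is concave, the standard three-point inequality gives $m_j \ge m_{j+1}$ for every $j$, so the slopes are nonincreasing; a continuous piecewise-affine function with nonincreasing slopes is concave.

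Finally, property 1 (the under-approximation bound) follows from concavity of $f_i$ applied pointwise. Fix $y \in [y_i^{(j)}, y_i^{(j+1)}]$ and write it as the convex combination $y = \lambda y_i^{(j)} + (1-\lambda) y_i^{(j+1)}$ with $\lambda \in [0,1]$. Because $\overline{f}_i$ is affine on this interval and interpolates $f_i$ at both endpoints (by property 3), linearity yields $\overline{f}_i(y) = \lambda f_i(y_i^{(j)}) + (1-\lambda) f_i(y_i^{(j+1)})$, whereas concavity of $f_i$ gives $f_i(y) \ge \lambda f_i(y_i^{(j)}) + (1-\lambda) f_i(y_i^{(j+1)})$; combining the two inequalities yields $\overline{f}_i(y) \le f_i(y)$. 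Since every $y$ in the domain lies in some such interval, the bound holds throughout $[y_i^{(1)}, y_i^{(k_i)}]$.

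The arguments are elementary, so I do not anticipate a serious obstacle; the only points requiring care are the bookkeeping around the half-open interval convention (in particular handling the right endpoint $y_i^{(k_i)}$ in properties 2 and 3) and the explicit derivation of the monotone-secant-slope property from concavity of $f_i$, which is the single fact on which all three conclusions ultimately rest.
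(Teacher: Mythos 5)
Your proposal is correct, and properties 1 and 3 are handled essentially as in the paper (direct evaluation at breakpoints; affine interpolation between points where $\overline{f}_i$ agrees with $f_i$, combined with concavity of $f_i$). Where you genuinely diverge is property 2, which is the bulk of the paper's proof: the paper shows that $\overline{f}_i$ coincides with $\tilde{f}_i(y) = \min_{j} \left\{ \frac{f_i(y_i^{(j+1)}) - f_i(y_i^{(j)})}{y_i^{(j+1)} - y_i^{(j)}} (y - y_i^{(j)}) + f_i(y_i^{(j)}) \right\}$, i.e., the pointwise minimum of the globally extended affine pieces, from which concavity and continuity follow at once; you instead verify continuity by matching one-sided values at each interior breakpoint and derive concavity from the nonincreasing secant slopes $m_j \ge m_{j+1}$ implied by concavity of $f_i$. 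Both arguments are sound and elementary. Your route is arguably shorter and more self-contained for the lemma itself, but the paper's min-of-affine-functions representation is not incidental: it is reused implicitly in Corollary~\ref{corr:succPWL}, where comparing two approximators built on nested breakpoint sets is most naturally done by viewing the coarser one as an under-approximator of the finer (concave) one. If you adopted your version, you would want to make sure the monotone-slope characterization (or an equivalent statement that $\overline{f}_i$ equals the minimum of its extended pieces) is still available downstream.
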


\begin{proof}
1.  Select $j$ so that $y \in [y_i^{(j)}, y_i^{(j+1)})$.  Then, taking $\lambda = \frac{y - y_i^{(j)}}{y_i^{(j+1)} - y_i^{(j)}} \in [0,1)$, 
\begin{equation*}
\begin{aligned}
        \overline{f}_i(y) & = \frac{y - y_i^{(j)}}{y_i^{(j+1)} - y_i^{(j)}} f_i(y_i^{(j+1)}) + \frac{y_i^{(j+1)} - y}{y_i^{(j+1)} - y_i^{(j)}} f_i(y_i^{(j)}) = \lambda f_i(y_i^{(j+1)}) + (1 - \lambda) f_i(y_i^{(j)}) \\
        & \leq f_i(\lambda y_i^{(j+1)} + (1 - \lambda) y_i^{(j)}) = f_i(y),
\end{aligned}
\end{equation*}
by the concavity of $f_i$.

2.  Denote $\tilde{f}_i(y) = \underset{j \in \{1,\dots,k_i - 1\}}{\min} \left \{ \frac{f_i(y_i^{(j+1)}) - f_i(y_i^{(j)})}{y_i^{(j+1)} - y_i^{(j)}} (y - y_i^{(j)}) + f_i(y_i^{(j)}) \right \}$.  Consider $y \in [y_i^{(1)},y_i^{(k_i)}]$ and any index $j$.  If $y \geq y_i^{(j+1)}$, let $\lambda = \frac{y_i^{(j+1)} - y_i^{(j)}}{y - y_i^{(j)}} \in (0,1]$ so that $y_i^{(j+1)} = \lambda y + (1 - \lambda) y_i^{(j)}$.  Then, by concavity of $f_i$,
\begin{equation*}
    f_i(y_i^{(j+1)}) \geq \lambda f_i(y) + (1 - \lambda) f_i(y_i^{(j)}),
\end{equation*}
and
\begin{equation*}
    \frac{f_i(y_i^{(j+1)}) - f_i(y_i^{(j)})}{y_i^{(j+1)} - y_i^{(j)}} (y - y_i^{(j)}) + f_i(y_i^{(j)}) \geq f_i(y) \geq \overline{f}_i(y).
\end{equation*}

Otherwise, if $y < y_i^{(j)}$, let $\lambda = \frac{y_i^{(j)} - y}{y_i^{(j+1)} - y} \in (0,1]$ so that $y_i^{(j)} = \lambda y_i^{(j+1)} + (1 - \lambda) y$.  By concavity of $f_i$,
\begin{equation*}
    f_i(y_i^{(j)}) \geq \lambda f_i(y_i^{(j+1)}) + (1 - \lambda) f_i(y),
\end{equation*}
and
\begin{equation*}
    \frac{f_i(y_i^{(j+1)}) - f_i(y_i^{(j)})}{y_i^{(j+1)} - y_i^{(j)}} (y - y_i^{(j)}) + f_i(y_i^{(j)}) \geq f_i(y) \geq \overline{f}_i(y).
\end{equation*}
Finally, if $y \in [y_i^{(j)},y_i^{(j+1)})$, 
\begin{equation*}
    \frac{f_i(y_i^{(j+1)}) - f_i(y_i^{(j)})}{y_i^{(j+1)} - y_i^{(j)}} (y - y_i^{(j)}) + f_i(y_i^{(j)}) = \overline{f}_i(y).
\end{equation*}
Thus, we have that $\overline{f}_i(y) \leq \tilde{f}_i(y)$.  Further, $\overline{f}_i(y) \in \left \{ \frac{f_i(y_i^{(j+1)}) - f_i(y_i^{(j)})}{y_i^{(j+1)} - y_i^{(j)}} (y - y_i^{(j)}) + f_i(y_i^{(j)}) \right \}_{j = 1}^{k_i - 1}$, so $\overline{f}_i(y) \geq \tilde{f}_i(y)$, and $\overline{f}_i(y) = \tilde{f}_i(y)$.  Therefore, as the function $\overline{f}_i$ is a minimum of linear functions, it is continuous and concave.

3. The equality follows from direct evaluation.

\end{proof}

\begin{corollary}
\label{corr:succPWL}
    Let $\overline{f}_i^{1}$ be the piecewise linear function associated with breakpoints $\{y_i^{(j)}\}_{j=1}^{k_i}$ and $\overline{f}_i^{2}$ the function with breakpoints $\{y_i^{(j)}\}_{j=1}^{k_i} \setminus \{y_i^{(r)}\}$ for some $r \not \in \{1,k_i\}$.  Then, $\overline{f}_i^{1}(y) \geq \overline{f}_i^{2}(y)$ for $y \in [y_i^{(1)}, y_i^{(k_i)}]$.
\end{corollary}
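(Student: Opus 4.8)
The plan is to recognize $\overline{f}_i^{2}$ as the piecewise linear interpolant of the \emph{concave} function $\overline{f}_i^{1}$, and then simply invoke the underestimation property already proved in Lemma~\ref{lemma:pwlFuncProperties}, rather than comparing the two piecewise functions slope-by-slope. First I would record the structural facts about $\overline{f}_i^{1}$ supplied by that lemma: by part~2 it is concave and continuous on $[y_i^{(1)}, y_i^{(k_i)}]$, and by part~3 it interpolates $f_i$ exactly at every breakpoint, i.e.\ $\overline{f}_i^{1}(y_i^{(j)}) = f_i(y_i^{(j)})$ for all $j \in \{1,\dots,k_i\}$. Since the deleted index satisfies $r \notin \{1,k_i\}$, the reduced breakpoint set $\{y_i^{(j)}\}_{j=1}^{k_i} \setminus \{y_i^{(r)}\}$ defining $\overline{f}_i^{2}$ still contains both endpoints $y_i^{(1)}$ and $y_i^{(k_i)}$, so $\overline{f}_i^{2}$ is defined on the same interval $[y_i^{(1)}, y_i^{(k_i)}]$ and the domains agree.

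The key step is to observe that $\overline{f}_i^{2}$ is exactly the function obtained by applying construction \eqref{eq:underapproximatorConstruction} to $\overline{f}_i^{1}$ (rather than to $f_i$) over the reduced breakpoint set. Indeed, \eqref{eq:underapproximatorConstruction} depends on the underlying function only through its values at the breakpoints, and for every retained node $y_i^{(j)}$ with $j \neq r$ we have $\overline{f}_i^{1}(y_i^{(j)}) = f_i(y_i^{(j)})$ by part~3 of Lemma~\ref{lemma:pwlFuncProperties}. Hence the linear interpolant of $\overline{f}_i^{1}$ over $\{y_i^{(j)}\}_{j \neq r}$ coincides piece-by-piece with the linear interpolant of $f_i$ over the same nodes, which is precisely $\overline{f}_i^{2}$.

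It then remains to apply part~1 of Lemma~\ref{lemma:pwlFuncProperties} with $\overline{f}_i^{1}$ playing the role of the concave function: since $\overline{f}_i^{1}$ is concave (part~2) and $\overline{f}_i^{2}$ is its piecewise linear interpolant on $[y_i^{(1)}, y_i^{(k_i)}]$, that part immediately yields $\overline{f}_i^{2}(y) \leq \overline{f}_i^{1}(y)$ for all $y$ in the interval, as claimed. The only point requiring care is the identification in the middle step, namely that replacing $f_i$ by $\overline{f}_i^{1}$ as the interpolated function leaves $\overline{f}_i^{2}$ unchanged; this hinges entirely on the exact-interpolation property (part~3), and once it is in hand the concave-underestimation lemma closes the argument with no further computation. (As a sanity check one may instead use the representation $\overline{f}_i^{1} = \min_j \ell_j$ from the proof of part~2, note that the two functions differ only on $[y_i^{(r-1)}, y_i^{(r+1)}]$ where $\overline{f}_i^{2}$ is the chord of the concave $\overline{f}_i^{1}$, and conclude by the chord inequality; but the interpolation argument above is shorter.)
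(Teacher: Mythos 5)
Your proposal is correct and follows essentially the same route as the paper: both identify $\overline{f}_i^{2}$ as the construction \eqref{eq:underapproximatorConstruction} applied to the concave function $\overline{f}_i^{1}$ over the reduced breakpoint set (using the exact-interpolation property, part 3 of Lemma~\ref{lemma:pwlFuncProperties}) and then invoke the underestimation property (part 1) to conclude. No gaps.
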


\begin{proof}
By Lemma~\ref{lemma:pwlFuncProperties}, $\overline{f}_i^{1}$ is concave.  Let $j_1$ and $j_2$ be consecutive breakpoints for $\overline{f}_i^{2}$, so $$j_2 = \begin{cases}
j_1 + 1 & j_1 \neq r - 1\\
j_1 + 2 & \text{otherwise}.
\end{cases}$$  Then, by Lemma~\ref{lemma:pwlFuncProperties},
$$\frac{f_i(y_i^{(j_2)}) - f_i(y_i^{(j_1)})}{y_i^{(j_2)} - y_i^{(j_1)}} (y_i - y_i^{(j_1)}) + f_i(y_i^{(j_1)}) = \frac{\overline{f}^{1}_i(y_i^{(j_2)}) - \overline{f}^{1}_i(y_i^{(j_1)})}{y_i^{(j_2)} - y_i^{(j_1)}} (y_i - y_i^{(j_1)}) + \overline{f}^{1}_i(y_i^{(j_1)})$$
for any $y_i \in [y^{(1)}_i,y^{(k_i)}_i]$.  Thus, $\overline{f}_i^{2}$ can be equivalently constructed in the form \eqref{eq:underapproximatorConstruction} by using the concave function $\overline{f}_i^{1}$ in place of $f_i$.  It then holds that $\overline{f}_i^{2}$ is a piecewise under-approximator of $\overline{f}_i^{1}$ and, by Lemma~\ref{lemma:pwlFuncProperties}, $\overline{f}_i^{1}(y) \geq \overline{f}_i^{2}(y)$ for $y \in [y_i^{(1)}, y_i^{(k_i)}]$.

\end{proof}

\setcounter{repeatlemma}{0}
\begin{repeatlemma}
    Let $(y^*,x^*) \in \mathcal{X}$ be an optimal solution to \eqref{eq:SCPSP}.  Then, there is an optimal solution that is an extreme point of $\mathcal{X}(y^*)$.
\end{repeatlemma}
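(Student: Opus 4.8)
The plan is to reduce the statement to a concave minimization over a polytope and then invoke the extreme-point results established earlier in the excerpt. First I would observe that fixing the integer variables to the values $y^*_i$ removes the only source of nonconvexity in $\mathcal{X}$: the restricted set $\mathcal{X}(y^*)$ is defined by the linear constraints $Ay = b$, $B_\omega x_\omega + Dy = d_\omega$ for all $\omega$, the equalities $y_i = y^*_i$ for $i \in \lBrack n_{\text{I}} \rBrack$, and $(y,x) \geq 0$, so it is a genuine polyhedron. It is nonempty because it contains $(y^*,x^*)$, and it is bounded because $\mathcal{X}$ is assumed bounded. Moreover, since $(y^*,x^*) \in \mathcal{X}$ forces $y^*_i \in \mathbb{Z}$ for $i \leq n_{\text{I}}$, every point of $\mathcal{X}(y^*)$ satisfies the integrality requirements of $\mathcal{X}$ and is therefore feasible for \eqref{eq:SCPSP}.

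Next I would argue that the objective of \eqref{eq:SCPSP} is concave on $\mathcal{X}(y^*)$. By Lemma~\ref{lemma:pwlFuncProperties}, each $\overline{f}_i$ is concave and continuous on $[\underline{y}_i, \overline{y}_i]$, while the recourse term $\sum_{\omega \in \Omega_1} p_\omega c_\omega^T x_\omega$ is linear; hence the full objective is a sum of concave functions and is concave on the polytope $\mathcal{X}(y^*)$.

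I would then apply Lemma~\ref{lemma:concMinFinite} (or, since $\mathcal{X}(y^*)$ is bounded, Lemma~\ref{lemma:horstExtPt}): minimizing a concave function over the nonempty polyhedron $\mathcal{X}(y^*) \subseteq \R^p_+$ attains its optimum at an extreme point $(\hat{y},\hat{x})$ of $\mathcal{X}(y^*)$. It then remains to tie this minimizer back to the original subproblem. Because $(y^*,x^*) \in \mathcal{X}(y^*)$, the objective value of $(\hat{y},\hat{x})$ is no larger than that of $(y^*,x^*)$; but $(y^*,x^*)$ is optimal for \eqref{eq:SCPSP} and $(\hat{y},\hat{x}) \in \mathcal{X}(y^*) \subseteq \mathcal{X}$ is feasible for \eqref{eq:SCPSP}, so the two values must coincide. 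Thus $(\hat{y},\hat{x})$ is an optimal solution of \eqref{eq:SCPSP} that is an extreme point of $\mathcal{X}(y^*)$, which is exactly the claim.

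I do not anticipate a serious obstacle in this argument; the only points requiring care are confirming that $\mathcal{X}(y^*)$ is truly a polyhedron once the integrality has been dissolved into the equalities $y_i = y^*_i$, and that its boundedness ensures a finite optimum so that the earlier extreme-point lemmas apply without the unboundedness caveat of Lemma~\ref{lemma:concMinFinite}. The matching of the restricted minimum with the global \eqref{eq:SCPSP} optimum then follows immediately from the optimality of $(y^*,x^*)$.
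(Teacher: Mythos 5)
Your argument is correct and follows essentially the same route as the paper: both restrict to the polytope $\mathcal{X}(y^*)$, use the concavity of the $\overline{f}_i$ from Lemma~\ref{lemma:pwlFuncProperties} together with Lemma~\ref{lemma:horstExtPt} to obtain an extreme-point minimizer, and then match the restricted optimal value with that of \eqref{eq:SCPSP} via the feasibility of $(y^*,x^*)$ for the restriction. The only cosmetic difference is the order in which the value-matching and the extreme-point existence are established.
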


\begin{proof}
Denote the optimal cost for \eqref{eq:SCPSP} by $z^{\text{SP}} = \sum_{i=1}^n \overline{f}_i(y^*_i) + \sum_{\omega \in \Omega_1} p_\omega c_\omega^T x^*_\omega$.  Consider the restricted problem
\begin{equation*}
    \tag{RSP}
    \label{eq:RSPLemma}
    \begin{aligned}
        z^{\text{RP}} = \min_{(y,x) \in \mathcal{X}(y^*)} \quad & \sum_{i = 1}^n \overline{f}_i(y_i) + \sum_{\omega \in \Omega_1} p_\omega c_\omega^T x_\omega.\\
    \end{aligned}
\end{equation*}
As $\mathcal{X}(y^*) \subseteq \mathcal{X}$, \eqref{eq:RSPLemma} is a restriction of \eqref{eq:SCPSP}, and $z^{\text{SP}} \leq z^{\text{RP}}$.  However, $(y^*,x^*)$ is feasible for \eqref{eq:RSPLemma}, so $z^{\text{RP}} \leq z^{\text{SP}}$.  Thus, the optimal values $z^{\text{RP}} = z^{\text{SP}}$, and an optimal solution for \eqref{eq:RSPLemma} is also optimal for \eqref{eq:SCPSP}.

Further, 
\begin{equation*}
\begin{aligned}
    \mathcal{X}(y^*) = \{(y,\{x_\omega\}_{\omega \in \Omega_1}) \geq 0\ :\ y_i = y^*_i\ \forall i \in \lBrack n_{\text{I}} \rBrack,\ A y = b,\ B_\omega x_\omega + D y = d_\omega\ \forall \omega \in \Omega_1\}
\end{aligned}
\end{equation*}
is a polytope under the assumption that $\mathcal{X}$ is bounded.  Lemma~\ref{lemma:pwlFuncProperties} gives that the functions $\overline{f}_i$ are concave, so the objective function of \eqref{eq:RSPLemma} is concave and \eqref{eq:RSPLemma} is a concave minimization over a polytope. Thus, \eqref{eq:RSPLemma} has an optimal solution at an extreme point of $\mathcal{X}(y^*)$ by Lemma~\ref{lemma:horstExtPt}.  This solution is also optimal for \eqref{eq:SCPSP}, and therefore, there is an extreme point of $\mathcal{X}(y^*)$ that is optimal for \eqref{eq:SCPSP}.

\end{proof}

\setcounter{repeattheorem}{2}
\begin{repeattheorem}
    Algorithm~\ref{alg:PWL} terminates finitely with a global optimum of \eqref{eq:SCP}.
\end{repeattheorem}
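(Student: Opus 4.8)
The plan is to establish two things: first, that whenever Algorithm~\ref{alg:PWL} halts at step~4, the incumbent $(\tilde{y}^s,\{\tilde{x}^s_\omega\})$ is a global optimum of \eqref{eq:SCP}; and second, that the halting condition is reached after finitely many iterations. Throughout I would lean on Lemma~\ref{lemma:pwlFuncProperties}, which guarantees that each $\overline{f}_i^s$ is a concave underestimator of $f_i$ agreeing with $f_i$ at its breakpoints. Since $\overline{f}_i^s(y_i)\le f_i(y_i)$ on $[\underline{y}_i,\overline{y}_i]$ and both problems share the feasible region $\mathcal{X}$, each subproblem solved in step~3 is a relaxation of \eqref{eq:SCP}, so its optimal value is a lower bound on the optimal value of \eqref{eq:SCP}.

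For correctness I would argue by a sandwich. The stopping test $\overline{f}_i^s(\tilde{y}_i^s)\ge f_i(\tilde{y}_i^s)$, together with the underestimation property, forces $\overline{f}_i^s(\tilde{y}_i^s)=f_i(\tilde{y}_i^s)$ for every $i$, so the subproblem objective at $(\tilde{y}^s,\{\tilde{x}^s_\omega\})$ equals the true \eqref{eq:SCP} objective at the same point. This point lies in $\mathcal{X}$ and is therefore feasible for \eqref{eq:SCP}, so its true objective is at least the optimal value of \eqref{eq:SCP}. On the other hand, that same value is the optimal value of the subproblem, which is a lower bound on the optimal value of \eqref{eq:SCP}. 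The two inequalities pinch, yielding global optimality.

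Finiteness is the substantive part, and it rests on a ``once exact, always exact'' property combined with a counting argument. At the iteration producing $\tilde{y}^{s'}$, every coordinate either already satisfies $\overline{f}_i^{s'}(\tilde{y}_i^{s'})=f_i(\tilde{y}_i^{s'})$ or receives a fresh breakpoint at $\tilde{y}_i^{s'}$; in both cases $\overline{f}_i^{s'+1}$ agrees with $f_i$ at $\tilde{y}_i^{s'}$ by property~3 of Lemma~\ref{lemma:pwlFuncProperties}. Because subsequent iterations only add breakpoints, Corollary~\ref{corr:succPWL} gives $\overline{f}_i^{s}\ge \overline{f}_i^{s'+1}$ pointwise for all $s>s'$, so the approximation stays exact at $\tilde{y}_i^{s'}$. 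Consequently, if an iterate ever repeated, say $\tilde{y}^s=\tilde{y}^{s'}$ with $s>s'$, then $\overline{f}_i^s(\tilde{y}_i^s)\ge f_i(\tilde{y}_i^s)$ would hold for all $i$ and the algorithm would stop at iteration~$s$; hence the iterates $\tilde{y}^1,\tilde{y}^2,\dots$ are pairwise distinct until termination. Finally I would invoke Assumption~\ref{assump:extPoint}: each $\tilde{y}^s$ is the $y$-part of an extreme point of $\mathcal{X}(\tilde{y}^s)$. Since $\mathcal{X}$ is bounded, the integer coordinates range over a finite set, and for each fixed integer assignment the polytope obtained by fixing those coordinates has finitely many extreme points. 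The set of candidate iterates is therefore finite, so only finitely many distinct iterates can arise and the algorithm must terminate.

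The main obstacle I anticipate is making the finiteness argument airtight. One must verify that adding breakpoints only at the non-exact coordinates still produces exactness at the full point $\tilde{y}^{s'}$, chain the monotonicity of Corollary~\ref{corr:succPWL} across arbitrarily many added breakpoints, and confirm that the union over all integer assignments of the extreme-point sets of $\mathcal{X}(\cdot)$ is genuinely finite. The boundedness of $\mathcal{X}$ and the integrality of the fixed coordinates are precisely what rule out a continuum of extreme points, so these hypotheses must be used explicitly rather than assumed implicitly.
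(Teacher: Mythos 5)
Your proposal is correct and takes essentially the same route as the paper's proof: the identical sandwich argument for global optimality at termination, and the identical finiteness argument, namely that under Assumption~\ref{assump:extPoint} every iterate lies in the finite set of extreme points of $\mathcal{X}(z)$ taken over the finitely many feasible integer assignments $z$, while revisiting any iterate forces the stopping test to fire because the underestimators are exact at all previous iterates and only increase pointwise as breakpoints accumulate (Corollary~\ref{corr:succPWL}). The only cosmetic difference is that you phrase the counting step as ``iterates are pairwise distinct until termination'' where the paper bounds the iteration count explicitly by $|\overline{\mathcal{E}}|+1$; these are contrapositives of one another.
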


\begin{proof}
   
    Consider the set of feasible realizations of the integer components of $y$, 
    $$\mathcal{Y} = \{z \in \mathbb{Z}_+^{n_{\text{I}}}\ :\ \exists (y,x) \in \mathcal{X},\ y_i = z_i\ \forall i \in \lBrack n_{\text{I}} \rBrack\} \subseteq \{z \in \mathbb{Z}_+^{n_{\text{I}}}\ :\ \underline{y}_i \leq z_i \leq \overline{y}_i \  \forall i \in \lBrack n_{\text{I}} \rBrack \}.$$
    Clearly, $\mathcal{Y}$ is finite.  
    We construct the set of points that are extreme points of $\mathcal{X}(z)$ for some $z \in \mathcal{Y}$,
    $$\mathcal{E} = \bigcup_{z \in \mathcal{Y}} \{(y,x)\ :\ (y,x) \text{ extreme point of } \mathcal{X}(z)\}.$$
    This set is also finite, as there are finitely many extreme points of any polytope $\mathcal{X}(z)$ and finitely many such $z \in \mathcal{Y}$.
    We next construct the set of unique capacity solutions with a corresponding extreme point:
    $$\overline{\mathcal{E}} = \bigcup_{(y,x) \in \mathcal{E}} \{y\}.$$
    Then, $|\overline{\mathcal{E}}| \leq |\mathcal{E}|$, so $\overline{\mathcal{E}}$ is finite.

    For any $y$ feasible for \eqref{eq:SCPSP} or \eqref{eq:SCP} and any set of breakpoints $\{y_i^{(j)}\}_{j=1}^{k_i}$ generated by Algorithm~\ref{alg:PWL}, 
    $$y_i^{(1)} = \underline{y}_i \leq y_i \leq \overline{y}_i = y_i^{(k_i)}$$ by the initialization in step 1 of the algorithm, so $y_i \in [y_i^{(1)},y_i^{(k_i)}]$.  As a result, the first property of Lemma~\ref{lemma:pwlFuncProperties} will apply for any feasible solution $y$ to these problems.  Further, $y_i^{(1)}$ and $y_i^{(k_i)}$ maintain the same values across iterations by step 5 of the algorithm.

    Suppose that Algorithm~\ref{alg:PWL} produces a sequence of iterates $(\tilde{y}^{s},\{\tilde{x}^{s}_\omega\}_{\omega \in \Omega_1})$ of length at least $|\overline{\mathcal{E}}| + 1$.  We note that \eqref{eq:SCPSP} has a finite optimal solution by the boundedness assumption on $\mathcal{X}$ (Lemma~\ref{lemma:horstExtPt}).  Under Assumption~\ref{assump:extPoint}, at every iteration $s$, the iterate $(\tilde{y}^{s},\{\tilde{x}^{s}_\omega\}_{\omega \in \Omega_1})$ is an extreme point of $\mathcal{X}(\tilde{y}^{s})$, so $\tilde{y}^{s} \in \overline{\mathcal{E}}$ for all $s$.

    As the number of iterates is strictly larger than $|\overline{\mathcal{E}}|$, some capacity solution must be revisited, so $\tilde{y}^{s_1} = \tilde{y}^{s_2}$ for some $s_1 < s_2 \leq |\overline{\mathcal{E}}| + 1$.
    Denote the set of breakpoints at the start of iteration $s_2$ by $\{y_i^{(j)}\}_{j=1}^{k_i}$.
    Note that the breakpoints which generate $\overline{f}_i^{s_1}$ are a subset of those which generate $\overline{f}_i^{s_2}$ and include $\{y_i^{(1)},y_i^{(k_i)}\}$, so $\overline{f}_i^{s_2}(\tilde{y}_i^{s_1}) \geq \overline{f}_i^{s_1}(\tilde{y}_i^{s_1})$ as a consequence of Corollary~\ref{corr:succPWL}.  From step 5 of the algorithm, either $\tilde{y}_i^{s_1} \in \{y_i^{(j)}\}_{j=1}^{k_i}$ or $\overline{f}^{s_1}_i(\tilde{y}_i^{s_1}) \geq f_i(\tilde{y}_i^{s_1})$ for each $i$.  In the former case, $\overline{f}_i^{s_2}(\tilde{y}_i^{s_2}) = f_i(\tilde{y}_i^{s_2})$ by Lemma~\ref{lemma:pwlFuncProperties}.  In the latter case, 
    $$\overline{f}_i^{s_2}(\tilde{y}_i^{s_2}) = \overline{f}_i^{s_2}(\tilde{y}_i^{s_1}) \geq \overline{f}_i^{s_1}(\tilde{y}_i^{s_1}) \geq f_i(\tilde{y}_i^{s_1}) = f_i(\tilde{y}_i^{s_2}).$$
    Thus, the condition in step 4 of the algorithm is met at iteration $s_2$, and Algorithm~\ref{alg:PWL} terminates finitely within $|\overline{\mathcal{E}}| + 1$ iterations.

    Let the algorithm terminate at iteration $s$ with iterate $(\tilde{y}^s,\{\tilde{x}^s_\omega\}_{\omega \in \Omega_1})$.  Then,
    \begin{equation*}
        \begin{aligned}
            \sum_{i = 1}^n \overline{f}_i^s (\tilde{y}^s_i) + \sum_{\omega \in \Omega_1} p_\omega c_\omega^T \tilde{x}_\omega^s & = \underset{(y,\{x_\omega\}_{\omega \in \Omega_1}) \in \mathcal{X}}{\min} \sum_{i = 1}^n \overline{f}_i^s (y_i) + \sum_{\omega \in \Omega_1} p_\omega c_\omega^T x_\omega \\
            & \leq \underset{(y,\{x_\omega\}_{\omega \in \Omega_1}) \in \mathcal{X}}{\min} \sum_{i = 1}^n f_i(y_i) + \sum_{\omega \in \Omega_1} p_\omega c_\omega^T x_\omega \\
            & \leq \sum_{i = 1}^n f_i(\tilde{y}^s_i) + \sum_{\omega \in \Omega_1} p_\omega c_\omega^T \tilde{x}^s_\omega \\
            & \leq \sum_{i = 1}^n \overline{f}^s_i(\tilde{y}^s_i) + \sum_{\omega \in \Omega_1} p_\omega c_\omega^T \tilde{x}^s_\omega,
        \end{aligned}
    \end{equation*}
    where the first inequality follows from Lemma~\ref{lemma:pwlFuncProperties} and the third from the termination criterion in step 4 of the algorithm.  Therefore,
    $$\underset{(y,\{x_\omega\}_{\omega \in \Omega_1}) \in \mathcal{X}}{\min} \sum_{i = 1}^n f_i(y_i) + \sum_{\omega \in \Omega_1} p_\omega c_\omega^T x_\omega = \sum_{i = 1}^n f_i(\tilde{y}^s_i) + \sum_{\omega \in \Omega_1} p_\omega c_\omega^T \tilde{x}^s_\omega,$$
    and $(\tilde{y}^s,\{\tilde{x}^s_\omega\}_{\omega \in \Omega_1})$ is a global optimum for \eqref{eq:SCP}.
\end{proof}

\bibliographystyleappendix{informs2014}
\bibliographyappendix{battopt_appendix.bib}

\end{document}